\documentclass{amsart}
\usepackage{amsmath,amssymb,amsthm,hyperref,graphicx}
\usepackage{latexsym,mathtools,amsthm}
\usepackage{framed}

\newtheorem{thm}{Theorem}[section]
\newtheorem{prop}[thm]{Proposition}
\newtheorem{cor}[thm]{Corollary}
\theoremstyle{definition}
\newtheorem{defn}[thm]{Definition}

\newtheorem{ex}[thm]{Example}

\newtheorem{conj}[thm]{Conjecture}

\newtheorem{q}[thm]{Question}
\newtheorem{rem}[thm]{Remark}
\newtheorem{lemma}[thm]{Lemma}

\def\height{\operatorname{ht}}

\def\ZZ{{\mathbb Z}}

\newcommand{\field}{\Bbbk}

\newcommand{\maxideal}{\mathfrak{m}}
\newcommand{\nbar}{\mathbf{n}}
\newcommand{\xbar}{\mathbf{x}}
\newcommand{\abar}{\mathbf{a}}
\newcommand{\bbar}{\mathbf{b}}

\newcommand{\vbar}{\mathbf{v}}
\newcommand{\ebar}{\mathbf{e}}
\newcommand{\rbar}{\mathbf{r}}
\newcommand{\sbar}{\mathbf{s}}
\newcommand{\onebar}{\mathbf{1}}

\newcommand{\p}{P}
\newcommand{\poly}{\mathcal{P}}
\newcommand{\symbpoly}{\mathcal{Q}}
\newcommand{\lattice}[1]{\mathcal L(#1)}
\newcommand{\stairs}[1]{\mathcal S(#1)}
\newcommand{\hide}[1]{}

\DeclareMathOperator{\conv}{conv}

\DeclareMathOperator{\lcm}{lcm}
\DeclareMathOperator{\gens}{gens}
\DeclareMathOperator{\cone}{cone}
\DeclareMathOperator{\ass}{Ass}
\DeclareMathOperator{\maxass}{maxAss}

\DeclareMathOperator{\chr}{char}
\DeclareMathOperator{\spn}{span}

\begin{document}

\title[Symbolic Powers of Monomial Ideals]{Symbolic Powers of Monomial Ideals}

\author[S.M. Cooper]{Susan M. Cooper}
\address{Department of Mathematics\\
Central Michigan University\\
Mt. Pleasant, MI 48859 USA}
\email{s.cooper@cmich.edu}

\author[R.J.D. Embree]{Robert J.D. Embree}
\address{Department of Mathematics and Statistics\\
Queen's University\\
Kingston, ON K7L 3N6 Canada}
\email{3rjde@queenu.ca}

\author[H.T. H\`a]{Huy T\`ai H\`a}
\address{Department of Mathematics\\
Tulane University \\
New Orleans, LA 70118 USA}
\email{tai@math.tulane.edu}

\author[A.H. Hoefel]{Andrew H. Hoefel}
\address{Department of Mathematics and Statistics\\
Queen's University\\
Kingston, ON K7L 3N6 Canada}
\email{ahhoefel@mast.queensu.ca}

%\author[G.G. Smith]{Gregory G. Smith}
%\address{Department of Mathematics and Statistics\\
%Queen's University\\
%Kingston, ON Canada, K7L 3N6}
%\email{ggsmith@mast.queensu.ca}

\date{\today}

%\thanks{Acknowledgments:}

\keywords{Symbolic powers, symbolic polyhedron, initial degrees, monomial ideals.}

%\subjclass[2010]{Primary 13D40, 14C99; Secondary 14Q99.}
\subjclass[2010]{Primary 13F20; Secondary 13A02, 14N05.}

\begin{abstract}
We investigate symbolic and regular powers of monomial ideals.  For a square-free monomial ideal $I \subseteq \field [x_0, \ldots, x_n]$ we show $I^{(t(m+e-1)-e+r)} \subseteq \maxideal^{(t-1)(e-1)+r-1}(I^{(m)})^t$ for all positive integers $m, t$ and $r$, where $e$ is the big-height of $I$ and $\maxideal = (x_0, \ldots, x_n)$.  This captures two conjectures ($r=1$ and $r=e$): one of Harbourne-Huneke and one of Bocci-Cooper-Harbourne.  We also introduce the symbolic polyhedron of a monomial ideal and use this to explore symbolic powers of non-square-free monomial ideals.  
\end{abstract}

\maketitle

\section{Introduction}

Comparing the behaviour of symbolic and regular powers of a homogeneous ideal has become key to understanding many problems in commutative algebra and algebraic geometry.  Investigations have involved many mathematicians and have touched a number of related areas such as number theory and complex functions.  Among the known results is a celebrated containment of Ein-Lazarsfeld-Smith \cite{refELS} and Hochster-Huneke \cite{refHoHu}.  Much effort has been put towards tightening this containment.  To be more precise, let $R = \field [\mathbb P^n] = \field [x_0, \ldots, x_n]$ where $\field$ is a field and $0 \not = I \subset R$ be a homogeneous ideal.  We define the \emph{$m$-th symbolic power of $I$} to be 
$$I^{(m)} = R \cap \bigcap_{\p \in {\ass}(I)} I^mR_{\p}.$$
We know that $I^r \subseteq I^{(m)}$ if and only if $r \geq m$ (see Lemma 8.1.4 of \cite{refBetal}).  However, determining which symbolic powers are contained in regular powers is a much more complicated problem.  Using multiplier ideals and tight closure, respectively, Ein-Lazarsfeld-Smith and Hochster-Huneke show that $I^{(rn)} \subseteq I^r$ for all $r > 0$.  Further, for some ideals (including monomial ideals) Harbourne showed that $I^{(rn-(n-1))} \subseteq I^r$ for all $r > 0$.  

With the goal of tightening up the above containments, Harbourne and Huneke list a number of conjectured containments in their paper \cite{refHaHu}.  In general, they focus on the following questions:

\begin{q}\cite[Question 1.3, 1.4 and Conjecture 4.1.5]{refHaHu}
Let $R = \field [\mathbb P^n]$ and $\maxideal = (x_0, \ldots, x_n)$ be the maximal homogeneous ideal of $R$.  Let $I \subseteq R$ be a homogeneous ideal.
\begin{enumerate}
\item For which $m, i$ and $j$ do we have $I^{(m)} \subseteq \maxideal^jI^i$?
\item For which $j$ does $I^{(rn)} \subseteq \maxideal^jI^r$ hold for all $r$?
\item If $I$ satisfies $I^{(rn-(n-1))} \subseteq I^r$, then for which $j$ does $I^{(rn-(n-1))} \subseteq \maxideal^jI^r$? 
\end{enumerate}
\end{q}

Such containments would imply bounds for the invariant $\alpha(I) = \min\{d \mid I_d \not = 0\}$.  For example, using the fact that $\alpha(I^r) = r \alpha(I)$, the containment $I^{(rn)} \subseteq I^r$ implies the bound of Waldschmidt-Skoda (see \cite{refW} and \cite{refSk}) that $\frac{\alpha(I^{(r)})}{r} \geq \frac{\alpha(I)}{n}$ when $I$ is the ideal of a finite set of points in $\mathbb P^n$.  Moreover, again for an ideal $I$ of points in $\mathbb P^n$, Chudnovsky \cite{refCh} conjectured the inequality $\frac{\alpha(I^{(r)})}{r} \geq \frac{\alpha(I) + n - 1}{n}$ which would follow if the conjectured containment $I^{(rn)} \subseteq \maxideal^{r(n-1)}I^r$ of Harbourne-Huneke held.

For convenience of the reader, we list the specific conjectures of Harbourne-Huneke in Section~\ref{conj}. 
Much of the work on these conjectures has been on families of points in $\mathbb P^n$.
The goal of this paper is to study whether these containments between symbolic and regular powers hold for monomial ideals. 
We start our investigation in Section \ref{sectionprimary} where we give a new formula for the symbolic powers of a monomial ideal as the intersection of regular powers of certain primary components of the ideal. This formula is independent of the choice of primary decomposition.

Section \ref{sqgen} is dedicated to containments restricted to square-free monomial ideals. Our main result is Theorem \ref{thm.sqfree}:

\begin{thm}
Let $I \subset \field [x_0, x_1, \ldots, x_n]$ be any square-free monomial ideal and let $\maxideal = (x_0, \ldots, x_n)$.  Then for all positive integers $m$, $t$ and $r$ we have the containment
$$I^{(t(m+e-1)-e+r)} \subseteq \maxideal^{(t-1)(e-1)+r-1}(I^{(m)})^t$$
where $e$ is the big-height of $I$ (i.e., the maximum of the heights of the associated primes of $I$).
\end{thm}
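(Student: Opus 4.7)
My plan is to reduce to the case $r = 1$ via a simple variable-peeling base case, and then prove the $r = 1$ case by induction on $t$. Write $N_t := t(m+e-1)-e+1$, so the general statement reads $I^{(N_t+r-1)} \subseteq \maxideal^{(t-1)(e-1)+r-1}(I^{(m)})^t$.

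\textbf{Base case and reduction to $r=1$.} First I would prove $I^{(M+r-1)} \subseteq \maxideal^{r-1} I^{(M)}$ for all $M, r \ge 1$. For a monomial $f \in I^{(M+r-1)}$, restricting to any single associated prime gives $\deg f \ge M+r-1$, so we may pick $r-1$ variables (with multiplicity) of $f$ whose product $g$ lies in $\maxideal^{r-1}$. Removing one variable decreases $\sum_{x \in P}\deg_x(\cdot)$ by at most $1$ for every associated prime $P$, so $f/g \in I^{(M)}$. Applied with $M = N_t$, this reduces the theorem to the case $r=1$: $I^{(N_t)} \subseteq \maxideal^{(t-1)(e-1)}(I^{(m)})^t$.

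\textbf{Induction on $t$.} The case $t=1$ is trivial. For the inductive step, I would aim to prove the one-step containment
\[ I^{(N_t)} \;\subseteq\; I^{(m)} \cdot I^{(N_t - m)}. \qquad (\ast) \]
Combined with the base case (applied with $M = N_{t-1}$ and ``$r$''$= e$), which gives $I^{(N_t-m)} = I^{(N_{t-1}+e-1)} \subseteq \maxideal^{e-1} I^{(N_{t-1})}$, and with the inductive hypothesis $I^{(N_{t-1})} \subseteq \maxideal^{(t-2)(e-1)}(I^{(m)})^{t-1}$, this delivers $I^{(N_t)} \subseteq \maxideal^{(t-1)(e-1)}(I^{(m)})^t$.

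\textbf{Main obstacle: proving $(\ast)$.} For a monomial $f \in I^{(N_t)}$ with exponent $\abar$, I need an integer exponent $\bbar \le \abar$ satisfying $m \le \sum_{x \in P} b_x \le \sum_{x \in P} a_x - (N_t - m)$ for every associated prime $P$. The upper bound is $\ge m$ because $\sum_{x \in P} a_x \ge N_t$, so the real relaxation is feasible (e.g.\ $\bbar = (m/N_t)\abar$ works). The difficulty is integrality: the $0/1$ constraint matrix (primes versus variables) need not be totally unimodular --- the three associated primes of the edge ideal of a triangle already produce a fractional vertex --- so generic LP rounding fails. My plan is a combinatorial exchange argument: start with a minimal generator $h_0$ of $I^{(m)}$ dividing $f$, and if it violates the upper bound on some prime $P_0$, swap a variable of $h_0$ in $P_0$ for a variable of $f/h_0$ outside $P_0$, chosen to preserve membership in $I^{(m)}$ by respecting the tight primes that witness $h_0$'s minimality. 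The uniform lower bound $\sum_{x \in P'} a_x \ge N_t$ across all primes should guarantee enough room in $f$ for such swaps; proving termination of this exchange process is the technical crux. Alternatively, the primary-decomposition formula of Section~\ref{sectionprimary} may provide a cleaner handle on the integer decomposition.
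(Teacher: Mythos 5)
Your overall architecture (reduce to $r=1$, then induct on $t$) is a reasonable plan, and the reduction step $I^{(M+r-1)} \subseteq \maxideal^{r-1} I^{(M)}$ is correct; it is essentially Proposition~\ref{prop:onemaxideal} iterated $r-1$ times. The induction framework is also sound \emph{provided} the one-step containment $(\ast)$: $I^{(N_t)} \subseteq I^{(m)} I^{(N_t - m)}$ holds.

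But $(\ast)$ is the entire content of the theorem in disguise, and you have not proved it. You correctly diagnose the obstruction --- the constraint matrix (associated primes versus variables) is not totally unimodular, so LP feasibility does not yield an integer decomposition --- but the ``exchange argument'' you describe for rounding is only a sketch; you explicitly flag termination as the unresolved crux, and nothing you write rules out the exchange process cycling or stalling. Note also that $(\ast)$ is an instance of the assertion $I^{(a+b)} \subseteq I^{(a)} I^{(b)}$, which is \emph{false} in general for square-free monomial ideals (for the triangle ideal $I = (xy,yz,xz)$ one has $xyz \in I^{(2)} \setminus I^2$), so any proof of $(\ast)$ must exploit the particular arithmetic of $N_t = t(m+e-1)-e+1$; your sketch does not yet do so.

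The paper avoids $(\ast)$ entirely and, rather than peeling off one factor of $I^{(m)}$ per inductive step, extracts all $t$ copies simultaneously. Given $\xbar^{\abar} \in I^{(N)}$ with $N = t(m+e-1)-e+r$, it performs integer division $a_j = d_j t + a_j'$ with $0 \le a_j' \le t-1$; since each prime has at most $e$ variables, the remainder term contributes at most $(t-1)e$ to $\sum_{j \in E_i} a_j$, and a divisibility-by-$t$ argument upgrades the resulting lower bound on $\sum_{j \in E_i} d_j$ to $\ge m$. It then \emph{reduces} the vector $\dbar$ to $\dbar'$ so that equality $\sum_{j \in E_\ell} d_j' = m$ holds for some prime $\p_\ell$, and this equality is what produces the exact degree count for the cofactor in $\maxideal^{(t-1)(e-1)+r-1}$. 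This ``divide by $t$ all at once, then shrink until one constraint is tight'' step is the idea missing from your proposal; without it (or a proof of $(\ast)$), the argument has a genuine gap. If you want to salvage your route, the thing to prove is $(\ast)$ for the specific parameter $N_t - m = (t-1)(m+e-1)$, and I would suggest trying to mimic the paper's divide-and-tighten trick inside that lemma rather than relying on an exchange process.
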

This theorem captures two conjectures: one of the conjectures of Harbourne-Huneke (namely, $I^{(rn-(n-1))} \subseteq \maxideal^{(r-1)(n-1)} I^r$, for reduced points in $\mathbb P^n$) and an additional conjecture of Bocci-Cooper-Harbourne. Most of the other conjectures follow for square-free monomial ideals as a result of these two. We conclude Section \ref{sqgen} with an example of a non-square-free monomial ideal which does not satisfy the containment $I^{(rn-(n-1))} \subseteq \maxideal^{(r-1)(n-1)} I^r$.

The last two sections of the paper are dedicated to a new tool for studying symbolic powers of monomial ideals. Section \ref{sectionsymbpoly} introduces the \emph{symbolic polyhedron}, a convex polyhedron in $\mathbb R^{n+1}$, which, when scaled by a factor of $m$, contains the exponent vectors of all monomials in $I^{(m)}$ (see Theorem~\ref{thm:symbcontainment}). In this way, the symbolic polyhedron approximates the symbolic powers of a monomial ideal. In Section \ref{sectionalpha}, we show the utility of the symbolic polyhedron; we show that a simple invariant of the symbolic polyhedron gives the Waldschmidt constant $\gamma(I) = \lim_{m \to \infty} I^{(m)}/m$. 

We also use the symbolic polyhedron to show the following containment 
for monomial ideals (Theorem \ref{thm:alphaslope}):

\begin{thm}
Suppose $\symbpoly$ is the symbolic polyhedron of a monomial ideal $I$ with big-height $e$. 
For all integers $r \geq 0$ and $m \geq \max(er, \beta(I^r)/\alpha(\symbpoly))$,
\[ I^{(m)} \subseteq \maxideal^{\lceil \alpha(\symbpoly)m \rceil - \beta(I^r) } I^r,\]
where $\alpha(\symbpoly) = \min\{ a_1 + \cdots + a_n \mid (a_1,\ldots,a_n) \in \symbpoly\}$ and $\beta(I^r)$ is the maximum of the degrees of the minimal generators of $I^r$.
\end{thm}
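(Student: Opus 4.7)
The plan is to combine two ingredients.  First, by Theorem~\ref{thm:symbcontainment}, every monomial $x^{\abar} \in I^{(m)}$ has exponent vector $\abar \in m\symbpoly$, so $|\abar| \geq \alpha(\symbpoly) m$; since $|\abar|$ is an integer, $|\abar| \geq \lceil \alpha(\symbpoly) m \rceil$.  Second, since $e$ is the big-height of $I$, the Ein--Lazarsfeld--Smith / Hochster--Huneke containment gives $I^{(er)} \subseteq I^r$, and combined with the hypothesis $m \geq er$ (so in particular $I^{(m)} \subseteq I^{(er)}$), this yields $I^{(m)} \subseteq I^r$.

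Given these two inputs, the argument is short.  Because $I$ is a monomial ideal, it suffices to check the containment on monomial generators, so take any minimal monomial generator $u = x^{\abar}$ of $I^{(m)}$.  Since $u \in I^r$, some minimal monomial generator $v = x^{\bbar}$ of $I^r$ divides $u$; write $u = v \cdot w$ with $w = x^{\cbar}$ a monomial.  By definition of $\beta(I^r)$ we have $\deg v \leq \beta(I^r)$, so
\[
\deg w \;=\; \deg u - \deg v \;\geq\; \lceil \alpha(\symbpoly) m \rceil - \beta(I^r).
\]
The hypothesis $m \geq \beta(I^r)/\alpha(\symbpoly)$ ensures this lower bound is non-negative, and hence $w \in \maxideal^{\lceil \alpha(\symbpoly) m \rceil - \beta(I^r)}$.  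Therefore $u = vw \in \maxideal^{\lceil \alpha(\symbpoly) m \rceil - \beta(I^r)} I^r$, and since this holds for each monomial generator of $I^{(m)}$, the containment is established.

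The main obstacle is nothing novel at this point: the entire argument rests on Theorem~\ref{thm:symbcontainment} (which converts the polyhedron invariant $\alpha(\symbpoly)$ into a uniform lower bound on degrees of monomials in $I^{(m)}$) and on the classical Hochster--Huneke inclusion (which supplies membership in $I^r$).  The two numerical hypotheses on $m$ play separate roles---$m \geq er$ is exactly what is needed for the Hochster--Huneke step, while $m \geq \beta(I^r)/\alpha(\symbpoly)$ is exactly what is needed for the residual exponent to be non-negative---and together they give the result without any further combinatorial input about the shape of $\symbpoly$.
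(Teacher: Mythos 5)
Your proof is correct, and the skeleton of the argument (degree bound from Theorem~\ref{thm:symbcontainment}, plus membership in $I^r$, plus the observation that the leftover monomial has degree at least $\lceil \alpha(\symbpoly)m\rceil - \beta(I^r)$) matches the paper exactly. The one genuine departure is in how you get $\xbar^\abar \in I^r$. You cite the Ein--Lazarsfeld--Smith/Hochster--Huneke theorem in its big-height form $I^{(er)}\subseteq I^r$, together with the filtration $I^{(m)}\subseteq I^{(er)}$ for $m\geq er$. The paper instead keeps the argument internal to its symbolic polyhedron machinery: it shows $m\symbpoly \subseteq er\symbpoly$ (using that the recession cone of $\symbpoly$ is $\mathbb R_+^{n+1}$) and then applies Proposition~\ref{prop:erQ} ($er\symbpoly\subseteq\stairs{I^r}$), which was proved elementarily via Carath\'eodory's theorem. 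The payoff of the paper's route is self-containment --- in fact the paper deliberately points out that Proposition~\ref{prop:erQ} together with Theorem~\ref{thm:symbcontainment} \emph{reproves} the ELS/HH containment for monomial ideals, whereas your version quietly assumes it. Both are valid; yours is shorter but leans on a deep external theorem precisely where the paper was showcasing an elementary replacement for it.
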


In certain situations we show that Chudnovsky's conjecture holds. Although Chudnovsky's conjecture is often thought of as a consequence of the containment $I^{(rn)} \subseteq \maxideal^{r(n-1)} I^r$, we conclude the paper by showing examples of the converse. That is, assuming that Chudnovsky's conjecture holds, we can apply Theorem \ref{thm:alphaslope} to show containments between regular and symbolic powers of monomial ideals (see Propositions \ref{prop:equigen} and \ref{prop:ifchudholds}).
\medskip

{\bf Acknowledgements.} We would like to thank Brian Harbourne for sharing his insights and giving his support on this project. Authors Robert Embree and Andrew Hoefel would like to thank Gregory G. Smith for introducing them to this project, for many helpful conversations, and for financial support. We also thank S. Amin Seyed Fakhari for pointing out an error in a statement of Carath\'eodory's Theorem that appeared in a previous version of this paper.
%%%%%%%%%%%%%%%%%%%%%%%%%%%%%%%%%%%%%%%%%%%%%%%%%%%%%%%%%

\section{The Conjectures}\label{conj}

In \cite{refHaHu}, Harbourne and Huneke present a series of conjectures relating containments of symbolic and regular powers of ideals of points in $\mathbb P^n$.  For the convenience of the reader, we list these conjectures below along with variants of Chudnovsky's conjecture.  This paper will focus on parallel conjectures for monomial ideals.

For what follows, we let $\field [\mathbb P^n] = \field [x_0, \ldots, x_n]$ and $\maxideal = (x_0, \ldots, x_n)$ be the maximal proper homogeneous ideal of  $\field[\mathbb P^n]$.  Observe that if $I = \{P_1, \ldots, P_s\} \subseteq \mathbb P^n$ is a finite set of distinct points, then $I^{(m)} = I(P_1)^m \cap \cdots \cap I(P_s)^m$.  Given non-negative integers $m_1, \ldots, m_s$, we define the {\it fat point scheme with multiplicities $m_1, \ldots, m_s$} to be the scheme in $\mathbb P^n$ defined by the ideal $J = I(P_1)^{m_1} \cap \cdots \cap I(P_s)^{m_s}$.  The $\ell$th symbolic power of the fat points ideal $J$ is then 
\[ J^{(\ell)} = I(P_1)^{\ell m_1} \cap \cdots \cap I(P_s)^{\ell m_s}.
\]

\begin{conj}[{\cite[Conjecture 2.1]{refHaHu}}]\label{conj1}
Let $I=\cap_{i=1}^n I(P_i)^{m_i}\subset \field [\mathbb P^n]$ be any fat points ideal.  Then $I^{(rn)}\subseteq \maxideal^{r(n-1)}I^r$
holds for all $r>0$. 
\end{conj}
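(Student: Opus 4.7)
The strategy I would pursue is to tackle the conjecture first in the monomial setting, where Theorem \ref{thm.sqfree} applies, and then attempt to bootstrap to arbitrary fat points. In the square-free monomial case---for example, when the $P_i$ are coordinate points of $\mathbb{P}^n$ with all $m_i=1$---the ideal $I=\bigcap I(P_i)$ is a square-free monomial ideal of big-height $e=n$. Applying Theorem \ref{thm.sqfree} with $m=1$, $t=r$, and the theorem's third parameter equal to $n$ yields
\[
I^{(rn)} \;=\; I^{(r(1+n-1)-n+n)} \;\subseteq\; \maxideal^{(r-1)(n-1)+n-1}\,I^r \;=\; \maxideal^{r(n-1)}\,I^r,
\]
which is precisely the conjectured containment. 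For non-reduced fat points supported at coordinate points (so that $I$ is still monomial, though no longer square-free), one would analogously try to invoke the symbolic polyhedron machinery of Section \ref{sectionsymbpoly} and Theorem \ref{thm:alphaslope}, translating the containment into a combinatorial inequality on $\alpha(\symbpoly)$ and $\beta(I^r)$ that can in principle be checked from the primary decomposition in Section \ref{sectionprimary}.

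For general fat points, I would start from the Ein-Lazarsfeld-Smith / Hochster-Huneke containment $I^{(rn)} \subseteq I^r$ and try to squeeze out the extra $\maxideal^{r(n-1)}$ factor by refining the multiplier ideal proof on a log resolution of $\maxideal \cdot I$. The hoped-for factor ought to arise from the discrepancy contributions at the exceptional divisors over each $P_i$, together with the blow-up of the origin, which contributes an $\maxideal^{n-1}$ twist per power; an analogous route via tight closure (following Hochster-Huneke) would track the same degree information in the plus closure.

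The principal obstacle is that the conjecture is genuinely open for general points in $\mathbb{P}^n$, and the extra $\maxideal^{r(n-1)}$ factor is extremely tight: it is essentially equivalent (via $\alpha(I^{(rn)}) \geq \alpha(\maxideal^{r(n-1)} I^r)$) to Chudnovsky-type bounds on $\alpha(I^{(rn)})$, which are themselves unresolved. Even a reduction from the general case to the monomial one---say via Gr\"obner degeneration or specialization---is not straightforward, because symbolic powers do not in general commute with flat degeneration. Consequently my realistic expectation is to establish the conjecture only in the monomial setting (and variants accessible through the symbolic polyhedron), leaving the full conjecture for future work.
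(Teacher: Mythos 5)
This is a conjecture listed for reference, not a theorem the paper proves; as you correctly note, it remains open for general fat points. For the square-free monomial case, your specialization of Theorem~\ref{thm.sqfree} (with $m=1$, $t=r$, and the theorem's internal parameter set to $e=n$) is arithmetically correct, and it matches the route the paper takes: Corollary~\ref{conj3m} with $m=1$ gives $I^{(re)} \subseteq \maxideal^{r(e-1)} I^r$, and Remark~4.1(2) records that Conjecture~\ref{refinedEV2} implies Conjecture~\ref{conj1} for radical $I$. One small supplement: you restrict to coordinate points so that $e=n$, which is fine for monomial fat points ideals, but the paper's result in fact covers square-free monomial ideals with $e<n$ as well --- one just chains with Proposition~\ref{prop:onemaxideal} to pass from $I^{(re)}$ up to $I^{(rn)}$ at the cost of an extra $\maxideal^{r(n-e)}$, landing exactly on $\maxideal^{r(n-1)} I^r$.

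The remainder of your proposal --- invoking the symbolic polyhedron for non-reduced coordinate fat points, and refining the Ein--Lazarsfeld--Smith / Hochster--Huneke argument via log resolutions or tight closure --- is honestly hedged speculation rather than a proof, and you say so. The paper does not pursue either direction. It is worth flagging that the paper's own Example~4.5 exhibits a non-square-free monomial ideal violating the closely related Conjecture~\ref{conj2}, which should temper optimism about a clean extension to arbitrary multiplicities even in the monomial setting; the polyhedral approach via Theorem~\ref{thm:alphaslope} gives only asymptotic-$m$ containments and would need additional input to pin down the exact exponent $r(n-1)$ at $m=rn$. Your realistic assessment in the final paragraph is the correct one.
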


\begin{conj}[{\cite[Conjecture 8.20]{refBetal}}]\label{Essenconj}
Let $I\subseteq \field [\mathbb P^n]$ be a homogeneous ideal.
Then $I^{(rn-(n-1))}\subseteq I^r$ holds for all $r$. 
\end{conj}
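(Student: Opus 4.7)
The conjecture is open for general homogeneous ideals, so my plan is to establish it in the monomial setting (the focus of this paper), deducing it as a corollary of Theorem \ref{thm.sqfree}. I would apply that theorem to a square-free monomial ideal $I \subseteq \field[x_0,\ldots,x_n]$ of big-height $e$, specializing its parameters to $m = 1$, $t = r$, and $r_{\text{thm}} = 1$, where $r_{\text{thm}}$ denotes the third parameter of the theorem (written to avoid collision with $r$ in Conjecture \ref{Essenconj}). The exponents collapse to $t(m+e-1) - e + r_{\text{thm}} = (r-1)e + 1$ and $(t-1)(e-1) + r_{\text{thm}} - 1 = (r-1)(e-1)$, giving
\[ I^{((r-1)e + 1)} \subseteq \maxideal^{(r-1)(e-1)} I^r. \]

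Next I would invoke the elementary fact that for a square-free monomial ideal $I \subsetneq \maxideal$, every associated prime is a monomial prime strictly contained in $\maxideal$ and hence has height at most $n$; thus $e \leq n$ and $(r-1)e + 1 \leq rn - (n-1)$. Combined with monotonicity $I^{(a)} \subseteq I^{(b)}$ for $a \geq b$, this yields
\[ I^{(rn-(n-1))} \subseteq I^{((r-1)e+1)} \subseteq \maxideal^{(r-1)(e-1)} I^r \subseteq I^r, \]
proving the conjecture for such ideals. The remaining edge case $I = \maxideal$ is trivial since then $I^{(m)} = I^m$ and $rn - (n-1) \geq r$ for $n \geq 1$.

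The paper exhibits a non-square-free monomial ideal for which the stronger containment $I^{(rn-(n-1))} \subseteq \maxideal^{(r-1)(n-1)} I^r$ fails, so Theorem \ref{thm.sqfree} does not extend verbatim to the non-square-free case; the weaker Conjecture \ref{Essenconj} for arbitrary monomial ideals would need a separate approach (perhaps through the symbolic polyhedron introduced in Section \ref{sectionsymbpoly}, which controls exponent vectors directly). The genuine obstacle, however, is the general homogeneous case: the multiplier-ideal method of Ein-Lazarsfeld-Smith and the tight-closure method of Hochster-Huneke only give $I^{(rn)} \subseteq I^r$, and shaving the additional $n-1$ from the exponent appears to require a finer asymptotic tool—sharper vanishing on a log-resolution, or an asymptotic invariant like the Waldschmidt constant—for which no general machinery is currently known.
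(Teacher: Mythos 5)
Your derivation for square-free monomial ideals is correct and mirrors the paper's own route.  Setting $m=1$, $t=r$, and the theorem's third parameter to $1$ in Theorem~\ref{thm.sqfree} is exactly Corollary~\ref{conj4m}, and the chain $I^{(rn-(n-1))}\subseteq I^{((r-1)e+1)}\subseteq \maxideal^{(r-1)(e-1)}I^r\subseteq I^r$ (using $e\leq n$ for a proper square-free monomial ideal, together with monotonicity of symbolic powers) is precisely the argument recorded in implication~(3) of the remark opening Section~\ref{sqgen}: Conjecture~\ref{ourconj}, verified for square-free monomial ideals as Corollary~\ref{conj4m}, implies Conjecture~\ref{Essenconj} for that class.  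Your dispatch of the edge case $I=\maxideal$ (the only square-free monomial ideal with $e=n+1$) via $I^{(m)}=I^m$ is also correct, cf.\ Remark~\ref{rem:nplusone}.

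The problem is your closing paragraph, which misstates the status of the general conjecture.  You describe it as an open problem whose solution ``appears to require a finer asymptotic tool,'' but the paper explicitly records that Conjecture~\ref{Essenconj} is \emph{false} in general.  The remark following Conjecture~\ref{conj4} cites the counterexample of Dumnicki--Szemberg--Tutaj-Gasi\'nska in $\mathbb{C}[\mathbb P^2]$ with $r=2$, the characteristic-$3$ counterexample of Bocci--Cooper--Harbourne, and the Harbourne--Seceleanu counterexamples for arbitrary $r$ in $\field[\mathbb P^2]$ (when $\chr\field\neq 2$) and for various $n$.  So this is not a conjecture awaiting a stronger vanishing theorem or a sharper asymptotic invariant; it is a refuted conjecture, and the interesting residual question is to characterize the families of ideals for which the containment nevertheless holds.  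Your square-free monomial argument correctly places that class among them, but the framing of the general case should be reversed from ``open, probably true, need better tools'' to ``known false, ask for which ideals it survives.''
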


\begin{conj}[{\cite[Conjecture 4.1.4]{refHaHu}}]\label{p2conj}
Let $I\subseteq \field [\mathbb P^2]$ be the radical ideal of a finite set of points in $\mathbb P^2$.
Then $I^{(m)}\subseteq I^r$ holds whenever $m/r\ge 2\alpha(I)/(\alpha(I)+1)$. 
\end{conj}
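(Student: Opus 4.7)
The plan is to observe that the threshold $2\alpha(I)/(\alpha(I)+1)$ in the conjecture equals $\alpha(I)/\gamma^{*}$, where $\gamma^{*} = (\alpha(I)+1)/2$ is the Chudnovsky-predicted lower bound on the Waldschmidt constant of an ideal of points in $\mathbb{P}^{2}$. So the natural two-step strategy is: (a) establish the Chudnovsky bound $\gamma(I) \geq (\alpha(I)+1)/2$, which forces $\alpha(I^{(m)}) \geq m\gamma(I) \geq r\alpha(I) = \alpha(I^{r})$ whenever $m/r \geq 2\alpha(I)/(\alpha(I)+1)$; (b) upgrade this inequality between initial degrees into an actual ideal containment $I^{(m)} \subseteq I^{r}$.

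Step (a) is known in many cases (Chudnovsky's original analytic proof, and subsequent algebraic work of Esnault--Viehweg, Harbourne, and others), so for the present sketch I would either assume it or invoke those arguments. The substantive content is therefore step (b). In the monomial subcase, this step can be executed using Theorem~\ref{thm:alphaslope} directly: for a monomial ideal $\alpha(\symbpoly) = \gamma(I)$, and if $I^{r}$ is equigenerated in degree $r\alpha(I)$ (so $\beta(I^{r}) = r\alpha(I)$), the hypothesis $m\gamma(I) \geq \beta(I^{r})$ of that theorem coincides with the conjecture's hypothesis, and delivers $I^{(m)} \subseteq \maxideal^{\lceil m\gamma(I)\rceil - \beta(I^{r})} I^{r} \subseteq I^{r}$ once $m \geq er$.

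For the general (non-monomial) case of step (b), I would try to induct on $r$, using Harbourne's containment $I^{(2r-1)} \subseteq I^{r}$ as a base and exploiting the Cohen--Macaulay height-two structure of $I$: the primary decomposition $I = \bigcap_{i=1}^{s} I(P_{i})$ lets one analyse $I^{r}$ via its Rees algebra, and the linear growth of $\operatorname{reg}(I^{r})$ in $r$ constrains in which degrees generators can appear. Combined with the degree bound from step (a), this should at least allow one to show that any element of $I^{(m)}$ of degree $\geq r\alpha(I)$ already lies in $I^{r}$, possibly after passing to a generic coordinate projection to reduce to a codimension-one situation where Hilbert functions can be controlled explicitly.

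The main obstacle is step (b) in the general (non-monomial) setting. Converting an inequality between $\alpha(I^{(m)})$ and $\alpha(I^{r})$ into genuine polynomial membership is the central, recurring difficulty in all resurgence problems, and the sharp value $2\alpha(I)/(\alpha(I)+1)$ appears to lie beyond the reach of multiplier ideals, tight closure, or Harbourne's $I^{(rn-(n-1))} \subseteq I^{r}$ in isolation. Closing the gap almost certainly requires a new ingredient — possibly Seshadri constants on the blow-up of $\mathbb{P}^{2}$ at the $s$ points, or a structural analysis of the syzygy bundle of $I$ — which is why the conjecture remains open in full generality.
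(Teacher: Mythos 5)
This statement is a conjecture that the paper does \emph{not} prove; it is reproduced in Section~\ref{conj} (as Conjecture~4.1.4 of Harbourne--Huneke) purely for the reader's reference, and in Section~\ref{sqgen} the paper explicitly excludes it: ``once we prove Conjectures~\ref{refinedEV2} and~\ref{conj4}, we will have verified all of the conjectures of Harbourne--Huneke \emph{with the exception of Conjecture~\ref{p2conj}} for square-free monomial ideals.'' So there is no proof in the paper to compare your sketch against, and your closing admission that the general case remains open is the correct assessment.

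On the content of your sketch: the reduction to (a) Chudnovsky plus (b) upgrading a degree inequality to a membership statement isolates the two genuine issues, and the observation that $2\alpha(I)/(\alpha(I)+1)$ equals $\alpha(I)/\gamma^*$ for $\gamma^* = (\alpha(I)+1)/2$ is a nice way to see why Chudnovsky's bound is the natural lower ingredient. However, your claim that Theorem~\ref{thm:alphaslope} ``delivers'' the containment in the equigenerated monomial case quietly drops the hypothesis $m \geq er$ from that theorem, and this is not a minor technicality. In the $\mathbb{P}^2$ setting the relevant big-height is $e = 2$, so Theorem~\ref{thm:alphaslope} only kicks in once $m \geq 2r$, whereas $2r\alpha(I)/(\alpha(I)+1) < 2r$ for every finite $\alpha(I)$. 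The entire nontrivial range $2r\alpha(I)/(\alpha(I)+1) \leq m < 2r$ that the conjecture asserts containment for is precisely the range the theorem cannot reach, because for $m < er$ one no longer has $m\symbpoly \subseteq er\symbpoly \subseteq \stairs{I^r}$ (the inclusion $er\symbpoly \subseteq \stairs{I^r}$ of Proposition~\ref{prop:erQ} does not improve to $m\symbpoly \subseteq \stairs{I^r}$ for smaller $m$). So even in the favourable monomial, equigenerated, Chudnovsky-satisfying situation, the paper's machinery gives $I^{(2r)} \subseteq I^r$ but not the sharper exponent the conjecture requires; this is exactly why the paper refrains from claiming Conjecture~\ref{p2conj}.
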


\begin{conj}[{\cite[Conjecture 4.1.5]{refHaHu}}]\label{conj2}
Let $I\subseteq \field [\mathbb P^n]$ be the radical ideal of a finite set of points in $\mathbb P^n$.  Then $I^{(rn-(n-1))}\subseteq \maxideal^{(r-1)(n-1)}I^r$ holds for all $r \geq 1$. 
\end{conj}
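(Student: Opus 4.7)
The plan is to derive Conjecture \ref{conj2} as a direct specialization of Theorem \ref{thm.sqfree} in the case where the radical ideal of points happens to be squarefree monomial; the general case (arbitrary radical ideals of points in $\mathbb{P}^n$) appears to lie beyond the monomial-ideal framework developed here, and I expect that step to be the main obstacle.

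First I would match parameters. Theorem \ref{thm.sqfree} states that, for any positive integers $m, t, r$ and any squarefree monomial ideal $I$ of big-height $e$,
\[ I^{(t(m+e-1)-e+r)} \subseteq \maxideal^{(t-1)(e-1)+r-1}(I^{(m)})^t. \]
Setting $m = 1$ and the theorem's $r$ equal to $1$, and then renaming the theorem's $t$ as $r$ to match the conjecture, the exponent on the left collapses to $(r-1)e + 1$, the exponent on $\maxideal$ collapses to $(r-1)(e-1)$, and $(I^{(1)})^r = I^r$. For a reduced zero-dimensional subscheme of $\mathbb{P}^n$, every associated prime of $I$ is the defining ideal of a point and therefore has height exactly $n$, so the big-height satisfies $e = n$. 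Substituting $e = n$ produces
\[ I^{(rn-(n-1))} = I^{((r-1)n + 1)} \subseteq \maxideal^{(r-1)(n-1)} I^r, \]
which is precisely the statement of Conjecture \ref{conj2}.

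The hard part is the passage from squarefree monomial $I$ to arbitrary radical $I$. A natural first attempt is Gr\"obner degeneration: choose a generic change of coordinates and a term order so that $\mathrm{in}(I)$ is a squarefree monomial ideal, apply the monomial case above, and then transfer the conclusion back to $I$. The obstruction is that symbolic powers do not commute with taking initial ideals; the associated primes of $\mathrm{in}(I)$ typically differ from those of $I$, so $\mathrm{in}(I)^{(m)}$ is in general not the initial ideal of $I^{(m)}$, and the containment cannot be transported in a straightforward way. Consequently, a proof of the full conjecture would likely have to refine the multiplier-ideal or tight-closure arguments of Ein-Lazarsfeld-Smith and Hochster-Huneke directly, so as to extract the additional factor $\maxideal^{(r-1)(n-1)}$ without passing through a monomial reduction.
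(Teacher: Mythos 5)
The statement you are addressing is itself a conjecture (Harbourne--Huneke, Conjecture 4.1.5), which the paper records but does not prove in general; it remains open for arbitrary radical ideals of points. What the paper establishes is the square-free monomial analogue, and your specialization of Theorem~\ref{thm.sqfree} is exactly that: setting the theorem's $r$ equal to $1$ gives Corollary~\ref{conj4m}, and setting $m=1$ there and renaming $t$ as $r$ produces $I^{((r-1)e+1)} \subseteq \maxideal^{(r-1)(e-1)} I^r$ with the big-height $e$ in place of $n$. For a radical ideal of points in $\mathbb P^n$ that happens to be a square-free monomial ideal (so the points are coordinate points), each associated prime has height exactly $n$, hence $e = n$, and the containment becomes $I^{(rn-(n-1))} \subseteq \maxideal^{(r-1)(n-1)} I^r$. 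Your parameter arithmetic is correct, your use of $I^{(1)} = I$ for a radical ideal is correct, and this collapsed derivation is the same route the paper takes in two steps (Corollary~\ref{conj4m} followed by the implication recorded in the Remark at the start of Section~\ref{sqgen}).

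Your second paragraph correctly flags the real gap: the conjecture as stated covers all radical ideals of a finite point set, not just monomial ones, and the techniques in this paper do not reach that generality. The paper does not claim otherwise; the Remark at the start of Section~\ref{sqgen} is explicit that only the monomial analogues of the Harbourne--Huneke conjectures are being verified. Your observation about why a G\"obner-degeneration reduction fails (symbolic powers are not compatible with passage to initial ideals, since the associated primes change) is a sound heuristic, though it goes beyond anything in the paper. In sum, there is no discrepancy between your proposal and the paper: both establish only the square-free monomial case, and neither proves the conjecture as stated.
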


\begin{conj}[{\cite[Conjecture 4.1.8]{refHaHu}}]\label{EvoEssenconj2}
Let $I\subseteq \field [\mathbb P^n]$ be the radical ideal of a finite set of points in $\mathbb P^n$.
Then for every $r>0$,
\[ \alpha(I^{(rn-(n-1))})\ge r\alpha(I)+(r-1)(n-1).
\]
\end{conj}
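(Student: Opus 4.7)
The plan is to view Conjecture \ref{EvoEssenconj2} as the initial-degree shadow of the containment in Conjecture \ref{conj2} and, accordingly, to split the argument into two stages: a short deduction of the degree bound from the containment, and an attack on the containment itself. Assuming the containment $I^{(rn-(n-1))} \subseteq \maxideal^{(r-1)(n-1)} I^r$, I would apply $\alpha$ to both sides and use the elementary facts $\alpha(JK) \geq \alpha(J) + \alpha(K)$, $\alpha(\maxideal^s) = s$, and $\alpha(I^r) = r\alpha(I)$ to obtain
\[ \alpha\bigl(I^{(rn-(n-1))}\bigr) \;\geq\; \alpha\bigl(\maxideal^{(r-1)(n-1)} I^r\bigr) \;\geq\; (r-1)(n-1) + r\alpha(I), \]
which is exactly the claimed inequality. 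This first step is purely formal.

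The content of the argument is therefore the containment of Conjecture \ref{conj2}. In the square-free monomial setting that this paper treats, the containment is a direct consequence of Theorem \ref{thm.sqfree}: starting from $I^{(t(m+e-1)-e+r)} \subseteq \maxideal^{(t-1)(e-1)+r-1}(I^{(m)})^t$, setting $m=1$, renaming $t$ as $r$, and choosing the parameter $r$ of the theorem to equal $(r-1)(n-e)+1$, one obtains $I^{(rn-(n-1))} \subseteq \maxideal^{(r-1)(n-1)} I^r$ (using $e \leq n$ for radical monomial ideals in $\field[\mathbb P^n]$). Feeding this into the deduction above yields the monomial analog of Conjecture \ref{EvoEssenconj2}.

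The main obstacle appears when one leaves the monomial world. For an arbitrary radical ideal of a finite point set in $\mathbb P^n$, the containment $I^{(rn-(n-1))} \subseteq \maxideal^{(r-1)(n-1)} I^r$ is itself a well-known open problem; the Ein-Lazarsfeld-Smith theorem $I^{(rn)} \subseteq I^r$ falls short both in the exponent on $I$ and in the absence of the extra $\maxideal^{(r-1)(n-1)}$ factor. Two natural approaches are (i) to refine the multiplier-ideal or tight-closure arguments so that the output lies in $\maxideal^j I^r$ rather than merely in $I^r$, and (ii) to bound $\alpha(I^{(rn-(n-1))})$ directly by combining the Waldschmidt limit $\alpha(I^{(m)})/m \to \gamma(I)$ with Chudnovsky-type lower bounds on $\gamma(I)$, then controlling the transient behaviour at finite $m$. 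Either path requires genuinely new input beyond the present paper's combinatorial methods, and that is where I expect the real difficulty to lie.
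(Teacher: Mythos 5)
Your proposal is correct and takes essentially the same route as the paper. The statement is a conjecture which the paper verifies only for square-free monomial ideals, precisely by deducing the containment $I^{(rn-(n-1))} \subseteq \maxideal^{(r-1)(n-1)} I^r$ from Theorem~\ref{thm.sqfree} and then applying the implication Conjecture~\ref{conj2}~$\Rightarrow$~Conjecture~\ref{EvoEssenconj2} (which the paper attributes to Proposition~3.10 of \cite{refBCH} rather than spelling out the short $\alpha$-computation as you do); your substitution $t=r$, $m=1$, and the theorem's $r$-parameter set to $(r-1)(n-e)+1$ in Theorem~\ref{thm.sqfree} indeed produces the stated containment with $n$ in place of $e$, and you correctly flag that beyond the monomial case the containment is an open conjecture, not a theorem of this paper.
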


\begin{conj}[Chudnovsky's Conjecture {\cite{refCh}}]\label{conj:chud}
Let $I \subseteq \field [\mathbb P^n]$ be the radical ideal of a finite set of points. Then, for all $r>0$,
\[ 
\frac{\alpha(I) + n -1}{n} \leq
\frac{\alpha(I^{(r)}) }{r}.
\]
\end{conj}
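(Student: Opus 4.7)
The conjecture is classical and open in full generality, so rather than attacking it head-on my plan is two-fold: describe the standard reduction that makes it a consequence of Conjecture~\ref{conj1}, and then indicate how the symbolic polyhedron machinery of Sections~\ref{sectionsymbpoly}--\ref{sectionalpha} settles it for monomial ideals.

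For the reduction, I would first observe that Conjecture~\ref{conj1} implies Chudnovsky: taking initial degrees in $I^{(rn)}\subseteq\maxideal^{r(n-1)}I^r$ yields
\[\alpha(I^{(rn)})\;\geq\;r(n-1)+r\alpha(I),\]
hence $\alpha(I^{(rn)})/(rn)\geq(\alpha(I)+n-1)/n$. To extend from the subsequence $\{rn\}$ to arbitrary $r$, I would use the elementary inclusion $(I^{(r)})^n\subseteq I^{(rn)}$, which gives $\alpha(I^{(rn)})\leq n\,\alpha(I^{(r)})$; combining the two inequalities produces Chudnovsky at every $r$. Equivalently, since $\alpha(I^{(r)})/r$ is always bounded below by the Waldschmidt constant $\gamma(I)=\lim_{m\to\infty}\alpha(I^{(m)})/m$ by subadditivity, it suffices to prove the asymptotic statement $\gamma(I)\geq(\alpha(I)+n-1)/n$.

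For a monomial ideal this asymptotic version becomes concrete, because $\gamma(I)=\alpha(\symbpoly)$ by the results of Sections~\ref{sectionsymbpoly}--\ref{sectionalpha}, so Chudnovsky reduces to a convex-geometric inequality on the symbolic polyhedron. In the square-free case I would derive it directly from Theorem~\ref{thm.sqfree}: specializing to $m=r=1$ gives $I^{(te-e+1)}\subseteq\maxideal^{(t-1)(e-1)}I^t$, whence $\alpha(I^{(te-e+1)})\geq(t-1)(e-1)+t\alpha(I)$; dividing by $te-e+1$ and letting $t\to\infty$ yields $\gamma(I)\geq(\alpha(I)+e-1)/e$, which matches or improves Chudnovsky whenever the big height $e$ of $I$ is at most $n$.

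The main obstacle is precisely that the reduction for general radical ideals depends on Conjecture~\ref{conj1}, which is itself open. For a radical ideal of points with no monomial structure there is no symbolic polyhedron to exploit, and even within the monomial world passing from the asymptotic bound $\gamma(I)\geq(\alpha(I)+e-1)/e$ to a sharp inequality at small $r$---that is, controlling $\alpha(I^{(r)})/r$ rather than only its limit---remains the hard step that one expects to carry out only in favourable families.
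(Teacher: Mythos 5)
The statement is a conjecture, not a theorem --- the paper catalogues it in Section~\ref{conj} without proof, and you correctly recognize this. What the paper actually proves is Chudnovsky for square-free monomial ideals: it deduces Corollary~\ref{conj3m} (Conjecture~\ref{refinedEV2}) from Theorem~\ref{thm.sqfree} by setting $r=e$, and then cites Proposition~3.10 of \cite{refBCH} (in the remark opening Section~\ref{sqgen}) for the implication from Conjecture~\ref{refinedEV2} to Chudnovsky. Your proposal reaches the same endpoint but makes the step self-contained: specializing Theorem~\ref{thm.sqfree} to $m=r=1$ gives $\alpha(I^{(te-e+1)})\geq(t-1)(e-1)+t\alpha(I)$; dividing by $te-e+1$ and letting $t\to\infty$ yields $\gamma(I)\geq(\alpha(I)+e-1)/e$, which combined with $\alpha(I^{(r)})/r\geq\gamma(I)$ gives Chudnovsky --- indeed the stronger $e$-form of the paper's Conjecture~\ref{conj:alpha}, which dominates the $n$-form since $e\leq n$. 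Your parallel account of the classical reduction from Conjecture~\ref{conj1} (take initial degrees in $I^{(rn)}\subseteq\maxideal^{r(n-1)}I^r$ and use $(I^{(r)})^n\subseteq I^{(rn)}$) is also correct, and you are right that for a non-monomial radical ideal of points this reduction remains conditional. Both your route and the paper's pass through the Waldschmidt constant and rest on Theorem~\ref{thm.sqfree} as the single key input; the only real difference is the choice of parameter specialization and whether one cites \cite{refBCH} or writes out the limit.
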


\begin{conj}[{\cite[Question 4.2.1]{refHaHu}}]\label{refinedChud}
Let $I\subseteq \field [\mathbb P^n]$ be the radical ideal of a finite set of points in $\mathbb P^n$.
Then, for all $r > 0$,
\[ \frac{\alpha(I^{(m)})+n-1}{m+n-1}\leq \frac{\alpha(I^{(r)})}{r}.
\]
\end{conj}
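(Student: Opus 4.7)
The plan is to reduce Conjecture \ref{refinedChud} to a single ideal containment and then invoke subadditivity. First, since $I^{(r)} \cdot I^{(s)} \subseteq I^{(r+s)}$, the sequence $a_r := \alpha(I^{(r)})$ is subadditive, so Fekete's lemma gives $\lim_{r \to \infty} a_r/r = \inf_r a_r/r = \gamma(I)$, and in particular $a_r/r \geq \gamma(I)$ for every $r > 0$. It therefore suffices to prove the single bound
\[ \gamma(I) \geq \frac{\alpha(I^{(m)}) + n - 1}{m + n - 1} \]
for each fixed $m$.

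To establish this, I would seek an ideal containment of the form
\[ I^{(t(m+n-1))} \subseteq \maxideal^{t(n-1)} (I^{(m)})^t \]
holding for infinitely many $t$. Taking initial degrees on both sides yields $\alpha(I^{(t(m+n-1))}) \geq t(n-1) + t\alpha(I^{(m)})$, hence $a_{t(m+n-1)}/(t(m+n-1)) \geq (\alpha(I^{(m)}) + n - 1)/(m + n - 1)$; letting $t \to \infty$ delivers the required bound on $\gamma(I)$.

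The main obstacle is the containment itself, which is strictly stronger than what Conjecture \ref{conj1} supplies. Applied to the fat points ideal $I^{(m)}$ (using $(I^{(m)})^{(\ell)} = I^{(m\ell)}$), Conjecture \ref{conj1} yields only $I^{(tmn)} \subseteq \maxideal^{t(n-1)}(I^{(m)})^t$; since $mn \geq m+n-1$ with equality only when $m = 1$ or $n = 1$, this gives the weaker bound $\gamma(I) \geq (\alpha(I^{(m)}) + n - 1)/(mn)$. So a containment whose symbolic exponent is the arithmetic $t(m+n-1)$ rather than the multiplicative $tmn$ is required. Encouragingly, Theorem \ref{thm.sqfree} of this paper delivers exactly such a sharpening for square-free monomial ideals: specializing its free parameter $r$ to the big-height $e$ gives $I^{(t(m+e-1))} \subseteq \maxideal^{t(e-1)}(I^{(m)})^t$, proving the monomial analogue of Conjecture \ref{refinedChud} (with $n$ replaced by $e$) unconditionally. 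For arbitrary radical ideals of points in $\mathbb P^n$ the analogous containment is open, and this is where I expect the genuine difficulty to lie; a natural attack would be to refine the Ein-Lazarsfeld-Smith/Hochster-Huneke multiplier-ideal or tight-closure arguments to capture the extra $n-1$ saved in the symbolic exponent, or, in the monomial case, to exploit the symbolic polyhedron of Section \ref{sectionsymbpoly} together with the identity $\gamma(I) = \alpha(\symbpoly)$ to extract a polyhedral proof of the above Waldschmidt-type bound directly.
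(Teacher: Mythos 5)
Your proposal is correct and follows essentially the route the paper takes: the paper does not prove Conjecture~\ref{refinedChud} directly, but records (in the Remark opening Section~\ref{sqgen}) that by Proposition~3.10 of \cite{refBCH} Conjecture~\ref{refinedEV2} implies Conjecture~\ref{refinedChud}, and then establishes the square-free monomial case of Conjecture~\ref{refinedEV2} as Corollary~\ref{conj3m} by setting $r=e$ in Theorem~\ref{thm.sqfree}. Your subadditivity-plus-degree-count argument is precisely the content of the BCH implication the paper cites rather than reproves, and your identification of the needed containment $I^{(t(m+e-1))}\subseteq \maxideal^{t(e-1)}(I^{(m)})^t$ as the output of Theorem~\ref{thm.sqfree} with $r=e$ is exactly how the paper closes the loop for square-free monomial ideals; you also correctly observe that for general radical point ideals the conjecture remains open, which matches the paper's scope.
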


\begin{conj}[{\cite[Question 4.2.2]{refHaHu}}]\label{conj3}
Let $I\subseteq \field [\mathbb P^n]$ be the radical ideal of a finite set of points in $\mathbb P^n$ for $n \geq 2$.  Then  
$I^{(t(m+n-1))}\subseteq \maxideal^t(I^{(m)})^t$.
\end{conj}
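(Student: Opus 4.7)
The plan is to split the problem into (i) the square-free monomial case, where the main theorem of Section~\ref{sqgen} applies directly, and (ii) the general case of a radical ideal of points in $\mathbb{P}^n$, where I would propose a multiplier-ideal argument that tightens the Ein-Lazarsfeld-Smith bound.

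For case (i): if $I$ is a square-free monomial ideal whose minimal primes all have height $n$ (so $I$ cuts out a subset of the coordinate points of $\mathbb{P}^n$), then substituting $r = e = n$ into Theorem~\ref{thm.sqfree} gives
\[
I^{(t(m+n-1))} \subseteq \maxideal^{(t-1)(n-1)+(n-1)}(I^{(m)})^t = \maxideal^{t(n-1)}(I^{(m)})^t,
\]
which, since $n \geq 2$, is strictly stronger than the conjectured $\maxideal^t(I^{(m)})^t$. So Conjecture~\ref{conj3} in the monomial case would be an immediate corollary of the main square-free theorem.

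For case (ii): the proposed two-step strategy is as follows. First, apply the Ein-Lazarsfeld-Smith / Hochster-Huneke containment to the symbolic power $I^{(m)}$; since $(I^{(m)})^{(k)} = I^{(mk)}$ for reduced point ideals, this yields the baseline inclusion $I^{(mtn)} \subseteq (I^{(m)})^t$. Second, sharpen the exponent from $mtn$ down to $t(m+n-1)$ at the cost of the extra $\maxideal^t$ factor on the right. The tool I would try for this sharpening is a combination of a Chudnovsky-type bound $\alpha(I^{(k)}) \geq k(\alpha(I)+n-1)/n$ with an asymptotic multiplier-ideal argument in the style of Ein-Lazarsfeld-Smith: any element $f \in I^{(t(m+n-1))}$ has degree at least $t(m+n-1)(\alpha(I)+n-1)/n$, which should exceed the minimal generating degree of $(I^{(m)})^t$ by at least $t$, forcing each local summand of $f$ in a generating decomposition to lie in $\maxideal^t(I^{(m)})^t$.

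The main obstacle is step two: the gap between the coarse ELS exponent $tmn$ and the tight exponent $t(m+n-1)$ is essentially the content of Conjecture~\ref{Essenconj}, which remains open in full generality. A realistic near-term target is therefore a conditional result: assuming both Conjecture~\ref{Essenconj} and Chudnovsky's Conjecture~\ref{conj:chud}, applying the former to $I^{(m)}$ gives $I^{(m(tn-n+1))} \subseteq (I^{(m)})^t$, and the Chudnovsky bound supplies the extra $t$ units of degree needed to extract a factor of $\maxideal^t$. Unconditionally, the strategy is only expected to succeed at present for monomial ideals (via Theorem~\ref{thm.sqfree}) and for special configurations such as generic points or those of small resurgence.
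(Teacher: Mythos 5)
This statement is recorded in the paper only as a conjecture, quoted verbatim from Harbourne--Huneke for the reader's convenience; the paper contains no proof of it for radical ideals of points in $\mathbb P^n$, and it remains open in that generality. What the paper does prove is the square-free monomial analogue, with $n$ replaced by the big-height $e$: setting $r=e$ in Theorem~\ref{thm.sqfree} gives $I^{(t(m+e-1))}\subseteq \maxideal^{t(e-1)}(I^{(m)})^t$ (Corollary~\ref{conj3m}), and the Remark opening Section~\ref{sqgen} notes that, by the implications of \cite[Proposition 3.10]{refBCH}, this yields the analogue of Conjecture~\ref{conj3}. Your case (i) is exactly this route and is correct; the only quibble is that you need not restrict to ideals whose minimal primes all have height $n$, since for any square-free monomial ideal of big-height $e\geq 2$ one has $\maxideal^{t(e-1)}\subseteq\maxideal^{t}$, and the appropriate monomial form of the conjecture uses $e$ in place of $n$ throughout.

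Your case (ii) is not a proof, as you yourself acknowledge, and the gap is not merely technical. The degree count you propose cannot work as stated: knowing that every $f\in I^{(t(m+n-1))}$ has degree at least $t$ more than the top generating degree of $(I^{(m)})^t$ only lets you extract a factor of $\maxideal^{t}$ \emph{after} you know $f\in (I^{(m)})^t$, and lowering the Ein--Lazarsfeld--Smith exponent from $mtn$ down to $t(m+n-1)$ is exactly the hard part. Worse, the auxiliary hypothesis you lean on, Conjecture~\ref{Essenconj}, is false even for radical ideals of points in $\mathbb P^2$ by the counter-examples of \cite{refDST} and \cite{refHS} cited in Section~\ref{conj}, so a result conditional on it is vacuous in general. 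The unconditional content of your proposal therefore reduces to case (i), which coincides with what the paper establishes.
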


\begin{conj}[{\cite[Question 4.2.3]{refHaHu}}]\label{refinedEV2}
Let $I\subseteq \field [\mathbb P^n]$ be the radical ideal of a finite set of points in $\mathbb P^n$. Then 
$I^{(t(m+n-1))}\subseteq \maxideal^{t(n-1)}(I^{(m)})^t$.
\end{conj}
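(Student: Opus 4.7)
My plan is to address the square-free monomial subcase as an immediate corollary of Theorem~\ref{thm.sqfree}, and then to attempt a reduction from the general case via Gr\"obner degeneration. For any radical ideal $I$ of a finite set of points in $\mathbb{P}^n$, every associated prime has height $n$, so the big-height is $e = n$. If $I$ also happens to be a square-free monomial ideal, I would specialise Theorem~\ref{thm.sqfree} at $r = e = n$: the exponent on the left collapses to $t(m+n-1) - n + n = t(m+n-1)$, and the exponent on $\maxideal$ becomes $(t-1)(n-1) + n - 1 = t(n-1)$, yielding exactly
\[
I^{(t(m+n-1))} \subseteq \maxideal^{t(n-1)}(I^{(m)})^t.
\]
This already covers ideals of coordinate points and, more generally, any equidimensional height-$n$ square-free monomial ideal.

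To extend this to an arbitrary radical ideal of points, I would try a flat degeneration $I \rightsquigarrow J := \mathrm{in}_\prec(I)$ with respect to a well-chosen weight or term order $\prec$. The goal would be to arrange that $J$ is again square-free (so Theorem~\ref{thm.sqfree} applies to $J$) and that $\mathrm{in}_\prec(I^{(k)}) \supseteq J^{(k)}$ for $k \in \{t(m+n-1), tm\}$. If both hold, a membership $f \in I^{(t(m+n-1))}$ would degenerate to a membership in $J^{(t(m+n-1))}$; applying Theorem~\ref{thm.sqfree} to $J$ would produce the containment at the monomial level, after which one would lift a factorisation of the initial form back to $f$ to witness membership in $\maxideal^{t(n-1)}(I^{(m)})^t$.

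The hard part is precisely this compatibility of symbolic powers with degeneration: in general $\mathrm{in}_\prec(I^{(k)})$ and $(\mathrm{in}_\prec(I))^{(k)}$ differ, and $\mathrm{in}_\prec(I)$ need not be radical, so one must either exhibit a term order compatible with a fixed primary decomposition or restrict to points in sufficiently general position. A naive fallback using only the Ein--Lazarsfeld--Smith containment applied to $J = I^{(m)}$ (together with $(I^{(m)})^{(k)} = I^{(mk)}$ for radical ideals of points) yields at best $I^{(mtn)} \subseteq (I^{(m)})^t$, which both misses the factor $\maxideal^{t(n-1)}$ and replaces $t(m+n-1)$ by the much larger $mtn$; closing this gap seems to require the kind of simultaneous control on the initial degrees of forms in $I^{(m)}$ that the symbolic polyhedron developed later in the paper is designed to provide in the monomial setting.
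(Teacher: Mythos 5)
Your first paragraph is exactly the paper's proof of Corollary \ref{conj3m}: take $r = e$ in Theorem \ref{thm.sqfree}, and the two exponents collapse as you compute to $t(m+e-1)$ and $t(e-1)$, giving the containment for square-free monomial ideals with $n$ replaced by the big-height $e$. That derivation is correct and matches the paper.

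The gap lies in the premise behind your second and third paragraphs. In this paper Conjecture \ref{refinedEV2} is stated and \emph{remains} a conjecture for arbitrary radical ideals of finite point sets in $\mathbb{P}^n$: the authors never prove it in that generality, only the analogue for square-free monomial ideals. The remark at the start of Section \ref{sqgen} is explicit that the conjectures are being ``verified \dots\ for square-free monomial ideals.'' Your Gr\"obner degeneration scheme is therefore an attempt to resolve a genuinely open problem, not to reproduce an argument the paper makes, and the obstructions you yourself flag are the decisive ones: $\mathrm{in}_\prec(I^{(k)})$ is generally not $(\mathrm{in}_\prec I)^{(k)}$; the initial ideal of a radical point ideal need not be radical, let alone square-free; and a containment of initial ideals does not imply containment of the ideals themselves (for example, under lex with $x>y$, $\mathrm{in}_{\mathrm{lex}}(x+y) = (x) = \mathrm{in}_{\mathrm{lex}}(x)$, yet $(x+y)\not\subseteq(x)$). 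Absent a mechanism forcing the full primary decomposition and the relevant symbolic powers to degenerate flatly and compatibly, this route does not close, which is consistent with the statement's status as an open conjecture. Relative to this paper, the correct scope is the square-free monomial case, which your first paragraph already handles.
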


While working on proving these conjectures in special cases, Bocci-Cooper-Harbourne also introduce the following conjecture:

\begin{conj}[{\cite[Conjecture 3.9]{refBCH}}]\label{ourconj}\label{conj4}
Let $I\subseteq \field [\mathbb P^n]$ be the radical ideal of a finite set of points in $\mathbb P^n$. Then $I^{(t(m+n-1)-n+1)}\subseteq \maxideal^{(t-1)(n-1)}(I^{(m)})^t$
hold for all $m\geq 1$.
\end{conj}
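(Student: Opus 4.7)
My plan is to handle the square-free monomial subcase directly via Theorem~\ref{thm.sqfree} of this paper, and then sketch a strategy for the general radical-ideal-of-points case, which remains the principal difficulty. Assume first that $I \subseteq \field[x_0, \ldots, x_n]$ is a square-free monomial ideal defining a finite subscheme of $\mathbb P^n$; every associated prime then has height $n$, so the big-height $e$ equals $n$. Applying Theorem~\ref{thm.sqfree} with $r = 1$ and $e = n$ yields
$$I^{(t(m+n-1)-n+1)} \;\subseteq\; \maxideal^{(t-1)(n-1)}(I^{(m)})^t,$$
which is exactly the claimed containment. This reduces Conjecture~\ref{conj4} to the case of non-monomial radical ideals of points.

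For a general radical ideal of points in $\mathbb P^n$, I would attempt two complementary approaches. The first is a multiplier-ideal method in the spirit of Ein--Lazarsfeld--Smith and Hochster--Huneke: express $I^{(t(m+n-1)-n+1)}$ in terms of an asymptotic multiplier ideal and use a Skoda-type argument to extract both the $(I^{(m)})^t$ factor and the additional $\maxideal^{(t-1)(n-1)}$ factor. The second is a specialization/degeneration approach: flat-degenerate the point configuration to one whose defining ideal is a square-free monomial ideal, and show that the containment behaves semicontinuously along the degeneration, reducing to the monomial case just handled.

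The main obstacle in either strategy is producing the extra $\maxideal^{(t-1)(n-1)}$ factor. In the monomial proof this exponent arises naturally from the combinatorics of exponent vectors: the big-height $e = n$ controls how much total degree is forced into each generator of $I^{(m)}$, and multiplying $t$ such generators deposits $(t-1)(n-1)$ extra degrees of vanishing at the origin. For arbitrary point configurations there is no comparable combinatorial bookkeeping, so one would have to refine the Brian\c{c}on--Skoda theorem to simultaneously track the symbolic power and the power of the maximal ideal. Producing such a refinement is precisely the gap that keeps Conjecture~\ref{conj4} open in the general case, and it is why the present paper restricts attention to monomial $I$.
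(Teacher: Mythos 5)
The statement you are addressing is Conjecture~\ref{conj4}, which the paper reproduces from Bocci--Cooper--Harbourne \cite{refBCH} but does \emph{not} prove; what the paper proves is only the square-free monomial analog, Corollary~\ref{conj4m}, obtained from Theorem~\ref{thm.sqfree} by setting $r=1$. Your treatment of the monomial subcase is exactly that argument: you correctly observe that if a square-free monomial ideal is the radical ideal of a finite set of points in $\mathbb P^n$ (so each associated prime has height $n$ and the big-height $e$ equals $n$), then Theorem~\ref{thm.sqfree} with $r=1$ gives precisely the conjectured containment $I^{(t(m+n-1)-n+1)} \subseteq \maxideal^{(t-1)(n-1)}(I^{(m)})^t$. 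This matches the paper. Your subsequent discussion of multiplier-ideal and degeneration strategies for arbitrary point configurations is speculative and, as you yourself note, does not constitute a proof; the conjecture remains open in general, and the paper does not claim otherwise. So your proposal is honest about its scope and correct for the portion it actually establishes, and on that portion it coincides with the paper's argument.
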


\begin{rem}
The first counter-example to Conjecture~\ref{Essenconj} was due to Dumnicki-Szemberg-Tutaj-Gasi\'nska \cite{refDST}. Their counter-example broke the containment in $\mathbb C[\mathbb P^2]$ with $r = 2$ and inspired the counter-example of Bocci-Cooper-Harbourne \cite{refBCH} which works in $\field[\mathbb P^2]$ with $\chr \field = 3$ and $r= 2$. More recently, Harbourne and Seceleanu \cite{refHS} have produced counter-examples in $\field[\mathbb P^2]$ for arbitrary $r$ when $\chr \field \neq 2$. In the same paper, Harbourne and Seceleanu also produce counter-examples to Conjecture~\ref{Essenconj} in $\field[\mathbb P^n]$ for various $n$ and for values of $r$ determined by $n$ and $\chr \field$.
\end{rem}

%%%%%%%%%%%%%%%%%%%%%%%%%%%%%%%%%%%%%%%%%%%%%%%%%%%%%%%%%%%%%
\section{Symbolic Powers of Monomial Ideals} \label{sectionprimary}

In this section, we show that the symbolic powers of a monomial ideal can be described without using localizations.  Before we get started, we set some notation and recall some facts.  For the remainder of the paper, we fix $R = \field[x_0, \ldots, x_n]$ to be the polynomial ring in $n+1$ variables over a field $\field$.

Recall that every monomial ideal $I \subset R$ has a unique set of \emph{minimal monomial generators} $\gens(I)$ which are minimal in the sense that no proper subset of $\gens(I)$ generates $I$ and also in the sense that each monomial in $\gens(I)$ is minimal in the poset of monomials in $I$ ordered by divisibility.  Also, a monomial ideal $\p$ is a prime ideal if and only if $\gens(\p) \subseteq \{x_0, \ldots, x_n\}$ and a monomial ideal $Q$ is primary if and only if ${\gens(Q) =\{ x_{i_1}^{a_1} ,\ldots, x_{i_k}^{a_k}, m_1, \ldots, m_\ell\}}$ and each $m_j$ is a monomial in $x_{i_1}, \ldots, x_{i_k}$. Although a monomial ideal $I$ has an irredundant primary decomposition, it will be an important fact that $I$ has a (possibly redundant) primary decomposition $I = Q_1 \cap \cdots \cap Q_k$ where each $Q_i$ is a primary monomial ideal generated solely by powers of variables. We denote the set of associated primes of $I$ by $\ass(I)$. 

Our first step in showing that the symbolic powers of a monomial ideal can be described without using localizations is to drop unnecessary intersections in the definition of $I^{(m)}$. Let $\maxass(I)$ be the subset of $\ass(I)$ which consists of associated primes of $I$ that are maximal with respect to inclusion. 

\begin{lemma} \label{lem:symb1}
The $m$-th symbolic power of an ideal $I \subseteq R$ is 
\[ I^{(m)} = \bigcap_{\p \in \maxass(I)} (R \cap I^m R_\p).
\]
\begin{proof}
If $\p$ and $\p'$ are prime ideals with $\p' \subseteq \p$ then $I^{m} R_\p \subseteq I^{m} R_{\p'}$. Thus we can drop each term $R \cap I^{m} R_{\p'}$ with $\p' \in \ass(I) \setminus \maxass (I)$ from the intersection that defines $I^{(m)}$.
\end{proof}
\end{lemma}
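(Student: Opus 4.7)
The plan is to reduce the intersection appearing in the definition of $I^{(m)}$ to just the terms coming from maximal associated primes, by showing that every other term is already redundant. Rewriting
\[ I^{(m)} = \bigcap_{\p \in \ass(I)} \bigl(R \cap I^m R_\p\bigr),\]
it suffices to prove that whenever $\p' \subseteq \p$ are both in $\ass(I)$, the term indexed by $\p'$ is a superset of the term indexed by $\p$, and therefore contributes nothing to the intersection.

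The key step is the comparison of the two localizations. Since $\p' \subseteq \p$, the multiplicative sets satisfy $R \setminus \p \subseteq R \setminus \p'$, so every element inverted in $R_\p$ is already inverted in $R_{\p'}$. By the universal property of localization this produces a canonical $R$-algebra map $R_\p \to R_{\p'}$, under which any representative $a/s$ of an element of $I^m R_\p$ (with $a \in I^m$ and $s \notin \p$) is still a legitimate representative of an element of $I^m R_{\p'}$ because $s \notin \p'$. Hence $I^m R_\p$ maps into $I^m R_{\p'}$, and intersecting back with $R$ yields
\[ R \cap I^m R_\p \subseteq R \cap I^m R_{\p'}.\]

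With this containment in hand, any $\p' \in \ass(I) \setminus \maxass(I)$ is dominated by some $\p \in \maxass(I)$ (which exists because $\ass(I)$ is finite), and the corresponding term of the intersection is automatically implied by the one indexed by $\p$. Discarding all such redundant factors gives the desired formula. I do not anticipate a real obstacle: the whole argument hinges on the single observation about the direction of the localization map, and no noetherian or finiteness subtlety beyond the finiteness of $\ass(I)$ is needed.
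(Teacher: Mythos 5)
Your proposal is correct and follows the same route as the paper: observe that for nested primes $\p' \subseteq \p$ the contracted ideal $R \cap I^m R_\p$ is contained in $R \cap I^m R_{\p'}$, so the non-maximal associated primes contribute redundant terms to the intersection. You spell out the localization map $R_\p \to R_{\p'}$ and the finiteness of $\ass(I)$ a bit more explicitly than the paper does, but the underlying idea is identical.
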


\begin{lemma} \label{lem:symb2}
Suppose $I = Q_1 \cap \cdots \cap Q_k$ is a primary decomposition of an ideal $I \subseteq R$ and suppose $\p$ is an associated prime of $I$.
Let $Q_{\subseteq \p}$ be the intersection of all $Q_i$ with $\sqrt{Q_i} \subseteq \p$. For every $m$, $Q_{\subseteq \p}^m R_\p = I^m R_\p$.
\begin{proof} 
Since $I \subseteq Q_i$ for each $i=1, \ldots, k$, we have $I \subseteq Q_{\subseteq \p}$ and hence $I^m R_\p \subseteq Q_{\subseteq \p}^m R_\p$ for each $\p \in \ass(I)$.

For the reverse containment, take $f \in Q_{\subseteq \p}^m R_\p$ and express it as $f = \sum_{j=1}^\ell c_j f_j$ where $c_j \in R_\p$ and $f_j \in Q_{\subseteq \p}^m$ for all $j$. Let $\Lambda \subseteq \{1, \ldots, k \}$ be the set of all indices $i$ with $\sqrt{Q_i} \not\subseteq \p$. For each $i \in \Lambda$, pick $g_i \in \sqrt{Q_i} \setminus \p$ and let $h_i$ be a power of $g_i$ with $h_i \in Q_i$. Since $\p$ is prime, $h_i \notin \p$ and therefore $h_i$ is invertible in $R_\p$.   The product $f_j' = f_j \prod_{i \in \Lambda} h_i^m \in I^m$ since $f_j \in Q_{\subseteq \p}^m \subseteq Q_i^m$ for $i \notin \Lambda$ and $h_i^m \in Q_i^m$ for $i \in \Lambda$. Since $h_i$ is invertible in $R_\p$, $f= \sum_{j=1}^\ell \frac{c_j}{(\prod_{i \in \Lambda} h_i)^m} f_j'$ is in $I^m R_\p$.
\end{proof}
\end{lemma}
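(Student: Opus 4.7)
The plan is to prove the two containments separately. The containment $I^m R_\p \subseteq Q_{\subseteq \p}^m R_\p$ is immediate: since $Q_{\subseteq \p}$ is the intersection of a subset of the $Q_i$'s, we have $I \subseteq Q_{\subseteq \p}$, and raising to the $m$-th power and localizing preserves the inclusion.

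For the reverse containment $Q_{\subseteq \p}^m R_\p \subseteq I^m R_\p$, I would exploit the fact that each primary component $Q_i$ with $\sqrt{Q_i} \not\subseteq \p$ contains an element that becomes a unit in $R_\p$. Concretely, letting $\Lambda = \{i : \sqrt{Q_i} \not\subseteq \p\}$, for each $i \in \Lambda$ I pick $g_i \in \sqrt{Q_i}\setminus \p$ and take a large enough power $h_i := g_i^{N_i} \in Q_i$. Since $h_i \notin \p$ and $\p$ is prime, the element $H := \prod_{i \in \Lambda} h_i^m$ is a unit in $R_\p$. Given $f \in Q_{\subseteq \p}^m$, the strategy is to produce $fH \in I^m$; then $f = fH / H \in I^m R_\p$ as desired, and the containment propagates to arbitrary elements of $Q_{\subseteq \p}^m R_\p$ by $R_\p$-linearity.

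The main obstacle, and really the only substantive point, is that $\bigcap_j Q_j^m$ is in general strictly larger than $(\bigcap_j Q_j)^m = I^m$, so one cannot conclude $fH \in I^m$ merely by checking membership in each $Q_j^m$ separately. To circumvent this, I would expand $f$ as a finite sum of $m$-fold products $b_1 \cdots b_m$ with each $b_\alpha \in Q_{\subseteq \p}$, and then use the factorization $H = \bigl(\prod_{i \in \Lambda} h_i\bigr)^m$ to redistribute the $h_i$'s one per factor:
\[
b_1 \cdots b_m \cdot H \;=\; \prod_{\alpha=1}^{m} \Bigl( b_\alpha \prod_{i \in \Lambda} h_i \Bigr).
\]
Each factor $b_\alpha \prod_{i \in \Lambda} h_i$ lies in every $Q_j$ (via $b_\alpha$ when $j \notin \Lambda$ and via $h_j$ when $j \in \Lambda$), hence in $I = \bigcap_j Q_j$. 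Summing over the terms of $f$, we get $fH \in I^m$, completing the plan.
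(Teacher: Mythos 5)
Your proof is correct and follows essentially the same route as the paper: establish $I^m R_\p \subseteq Q_{\subseteq \p}^m R_\p$ trivially from $I \subseteq Q_{\subseteq \p}$, then for the reverse direction multiply by a product $H = \prod_{i \in \Lambda} h_i^m$ of units of $R_\p$ to land inside $I^m$. The one place you do better than the paper is the redistribution step: the paper's justification ``since $f_j \in Q_i^m$ for $i \notin \Lambda$ and $h_i^m \in Q_i^m$ for $i \in \Lambda$'' literally only establishes $f_j H \in \bigcap_i Q_i^m$, which, as you correctly flag, is in general strictly larger than $I^m = (\bigcap_i Q_i)^m$ (e.g.\ $I = (xy,xz,yz)$ has $xyz \in \bigcap_\p \p^2 \setminus I^2$). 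Your explicit factorization $b_1\cdots b_m \cdot H = \prod_{\alpha=1}^m\bigl(b_\alpha \prod_{i\in\Lambda} h_i\bigr)$, with each factor checked to lie in every $Q_j$ and hence in $I$, is the argument the paper must have had in mind but did not spell out; equivalently one can say $Q_{\subseteq\p}\cdot\bigl(\prod_{i\in\Lambda} h_i\bigr) \subseteq I$ and then take $m$-th powers of ideals. Either way, your version is the complete one.
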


\begin{lemma} \label{lem:symb3}
Suppose $I$ is a monomial ideal with a unique maximal associated prime. For every $m$, $I^{(m)} = I^m$.

\begin{proof}
From the definition of the symbolic power, it is clear that $I^m \subseteq I^{(m)}$ for all $m$. 
We need to show the opposite inclusion $I^{(m)} \subseteq I^m$.
By Lemma \ref{lem:symb1}, $I^{(m)} = R \cap I^m R_\p$ where $\p$ is the unique maximal associated prime of $I$.

From \cite{MR0242802} Proposition 3.11, $R \cap I^m R_\p = \bigcup_{g \notin \p} (I^m: g)$. Thus, for any element $f \in R \cap I^m R_\p$ there is some $g \notin \p$ with $fg \in I^m$. We will use induction on the number of terms in $f$ to show that if $f \in (I^m:g)$ for some $g \notin \p$ then $f \in I^m$. If $f$ has no terms then $f = 0 \in I^m$.

Take a polynomial $f \in (I^m:g)$ for some $g \notin \p$. As $g \notin \p$ and $\p$ is a monomial ideal, some term of $g$ is not in $\p$. If we pick an elimination order for $\{ x_i \mid x_i \notin \p \}$, then the leading term $g'$ of $g$ is a monomial not in $\p$. Let $f'$ be the leading term of $f$ with respect to the same term order. The leading term of $fg$ is $f'g'$ and, in particular, it has a non-zero coefficient. Since $fg \in I^m$ and $I^m$ is a monomial ideal, its leading term $f'g'$ must be in $I^m$. 

Since $g' \notin \p$ and all generators of $I^m$ are monomials in the variables generating $\p$, we see that $f'g' \in I^m$ implies $f' \in I^m$.  Therefore, $f -f' \in (I^m:g)$. Since $f-f'$ has fewer terms than $f$, we have $f-f' \in I^m$ by our inductive hypothesis. Therefore, $f = (f-f') + f' \in I^m$ and hence $I^{(m)} \subseteq I^m$.
\end{proof}
\end{lemma}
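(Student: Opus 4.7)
The plan is to show both inclusions of $I^{(m)} = I^m$; only $I^{(m)} \subseteq I^m$ is nontrivial. First, I would invoke Lemma \ref{lem:symb1} to rewrite the symbolic power as $I^{(m)} = R \cap I^m R_{\p}$, where $\p$ is the unique maximal associated prime of $I$. Using the standard description of the contraction from a localization, $R \cap I^m R_\p = \bigcup_{g \notin \p} (I^m : g)$, the task reduces to showing that whenever $fg \in I^m$ with $g \notin \p$, one has $f \in I^m$.

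The key structural observation I would record at the outset is that every minimal monomial generator of $I$, and hence of $I^m$, involves only variables lying in $\p$. Indeed, any variable $x_j \notin \p$ is not contained in any associated prime of $I$, and is therefore a nonzerodivisor on $R/I$; if such an $x_j$ divided a minimal generator $u$ of $I$, then $u/x_j \in (I:x_j) = I$ would contradict the minimality of $u$.

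With this in hand, I would proceed by induction on the number of monomial terms of $f \in (I^m : g)$. The base case $f=0$ is trivial. For the inductive step, choose an elimination term order in which every variable outside $\p$ dominates every variable inside $\p$. Because $g \notin \p$ and $\p$ is monomial, at least one term of $g$ lies outside $\p$, so the leading monomial $g'$ of $g$ is supported solely on variables outside $\p$. Let $f'$ be the leading term of $f$; then $f'g'$ is the leading term of $fg \in I^m$, and since $I^m$ is a monomial ideal, $f'g' \in I^m$. By the structural observation above, no generator of $I^m$ involves any variable appearing in $g'$, so divisibility in the monoid of monomials forces $f' \in I^m$. Then $f - f' \in (I^m : g)$ has strictly fewer terms, and induction concludes that $f \in I^m$.

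The main obstacle, to my mind, is formulating the structural observation precisely; once variables outside $\p$ are identified as nonzerodivisors on $R/I$, the remainder is a routine term-order-and-induction argument. A secondary point of care is ensuring the chosen elimination order really does pick out a leading monomial of $g$ supported outside $\p$, but this follows immediately from the construction of the order.
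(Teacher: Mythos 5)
Your proposal is correct and follows essentially the same route as the paper: reduce via Lemma~\ref{lem:symb1} and the colon-ideal description of the contraction, then argue by induction on the number of terms of $f$, using an elimination order to show $f' \in I^m$ and peel off the leading term. The one place you go further than the paper is in explicitly justifying that variables outside $\p$ cannot appear in any minimal generator of $I$ (via nonzerodivisors on $R/I$ and $(I:x_j)=I$); the paper simply asserts this, so your version is a welcome bit of extra rigor but not a different proof.
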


\begin{defn}
The \emph{big-height} of an ideal $I \subseteq R$ is the maximum of the heights of its associated primes.
\end{defn}

Many of Harbourne and Huneke's conjectures on symbolic powers of ideals of points in $\mathbb P^n$ depend on $n$. In order to restate conjectures for monomial ideals, the role of $n$ will be replaced by the big-height the ideal.

\begin{rem} \label{rem:nplusone} If $I \subseteq R = \field[x_0,\ldots, x_n]$ is a monomial ideal with big-height $e =n+1$, then $\maxideal$ is an associated prime of $I$.
Thus $\maxideal$ is the unique maximal associated prime of $I$ and we know from Lemma \ref{lem:symb3} that the symbolic and regular powers of $I$ are equal. Consequently, we are only concerned with ideals having $e \leq n$.
\end{rem}

\begin{prop} \label{prop:qsubp}
Suppose $I = Q_1 \cap \cdots \cap Q_k$ is a primary decomposition of a monomial ideal and $\p$ is an associated prime of $I$.
Let $Q_{\subseteq \p}$ be the intersection of all $Q_i$ with $\sqrt{Q_i} \subseteq \p$. For all $m$,
$Q_{\subseteq \p}^m = R \cap I^m R_\p$.
\begin{proof}
By Lemma \ref{lem:symb2}, it suffices to show that $Q_{\subseteq \p}^m = R \cap Q_{\subseteq \p}^m R_\p$. Since $Q_{\subseteq \p}$ has a single maximal associated prime, Lemma \ref{lem:symb1} gives $Q_{\subseteq \p}^{(m)} = R \cap Q_{\subseteq \p}^m R_\p$. By Lemma \ref{lem:symb3}, we have $Q_{\subseteq \p}^{(m)} = Q_{\subseteq \p}^m$, completing the proof.
\end{proof}
\end{prop}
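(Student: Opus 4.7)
The proposition is essentially a bookkeeping combination of the three preceding lemmas, so the plan is to chain them together. The target equality $Q_{\subseteq \p}^m = R \cap I^m R_\p$ splits naturally into two steps: first replace $I$ by $Q_{\subseteq \p}$ inside the localization, and then strip off the localization entirely.

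For the first step, I would invoke Lemma \ref{lem:symb2}, which gives $I^m R_\p = Q_{\subseteq \p}^m R_\p$ directly. Contracting back to $R$, this yields
\[ R \cap I^m R_\p = R \cap Q_{\subseteq \p}^m R_\p, \]
so it only remains to show $R \cap Q_{\subseteq \p}^m R_\p = Q_{\subseteq \p}^m$.

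For the second step, I want to apply Lemma \ref{lem:symb3} to the monomial ideal $Q_{\subseteq \p}$. The hypothesis required is that $Q_{\subseteq \p}$ has a unique maximal associated prime. This is the only point that needs verification, but it is straightforward: the associated primes of $Q_{\subseteq \p} = \bigcap_{\sqrt{Q_i} \subseteq \p} Q_i$ form a subset of $\{\sqrt{Q_i} \mid \sqrt{Q_i} \subseteq \p\}$, each of which is contained in $\p$. On the other hand, $\p \in \ass(I)$ means $\p = \sqrt{Q_j}$ for some $j$, and that $Q_j$ is one of the factors defining $Q_{\subseteq \p}$, so $\p$ itself actually appears as an associated prime. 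Hence $\p$ is the unique maximum.

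With this in hand, Lemma \ref{lem:symb1} applied to $Q_{\subseteq \p}$ gives $Q_{\subseteq \p}^{(m)} = R \cap Q_{\subseteq \p}^m R_\p$, and Lemma \ref{lem:symb3} collapses the symbolic power to the ordinary power $Q_{\subseteq \p}^{(m)} = Q_{\subseteq \p}^m$. Splicing everything together produces the claimed equality. I do not anticipate any real obstacle here; the only subtlety is the small verification that $\p$ remains associated (not merely an upper bound) for the smaller ideal $Q_{\subseteq \p}$, which is what lets Lemma \ref{lem:symb3} apply.
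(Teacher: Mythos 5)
Your proposal follows exactly the same route as the paper: Lemma~\ref{lem:symb2} to replace $I$ by $Q_{\subseteq \p}$ inside the localization, then Lemma~\ref{lem:symb1} and Lemma~\ref{lem:symb3} applied to $Q_{\subseteq \p}$ to strip the localization. The paper itself simply asserts that $Q_{\subseteq \p}$ has a unique maximal associated prime; you attempt to justify it, which is good practice, but the justification has a soft spot worth flagging.

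You argue that because $\p = \sqrt{Q_j}$ for some $j$ and $Q_j$ is one of the factors defining $Q_{\subseteq \p}$, ``$\p$ itself actually appears as an associated prime.'' That inference is not valid on its own: a primary component of a (possibly redundant) decomposition of an ideal need not contribute an associated prime, since it may be absorbed by the other factors. For example, $(x) = (x) \cap (x,y)^2$ shows that $(x,y)$ can appear as a radical of a component without being associated. In your situation the conclusion \emph{is} true, but the clean argument is via localization: the components with $\sqrt{Q_i} \not\subseteq \p$ become the unit ideal in $R_\p$, so $IR_\p = Q_{\subseteq \p}R_\p$; since $\p \in \ass(I)$ and associated primes contained in $\p$ correspond to associated primes of the localization, $\p R_\p \in \ass(IR_\p) = \ass(Q_{\subseteq \p}R_\p)$, hence $\p \in \ass(Q_{\subseteq \p})$. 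Combined with your (correct) observation that every associated prime of $Q_{\subseteq \p}$ lies in the set $\{\sqrt{Q_i} : \sqrt{Q_i} \subseteq \p\}$, each contained in $\p$, this establishes that $\p$ is the unique maximal associated prime, and the rest of your argument goes through.
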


A consequence of Proposition \ref{prop:qsubp} with $m=1$ is that $Q_{\subseteq \p} = R \cap I R_\p$ and thus $Q_{\subseteq \p}$ does not depend on a choice of primary decomposition. Henceforth, we will use $Q_{\subseteq \p} = R \cap I R_\p$ as the definition of $Q_{\subseteq \p}$ so that we may avoid choosing   a primary decomposition.

\begin{thm} \label{thm:symbpower}
The $m$-th symbolic power of a monomial ideal $I$ is
\[ I^{(m)} = \bigcap_{\p \in \maxass(I)} Q_{\subseteq \p}^{m}
\]
where $Q_{\subseteq \p} = R \cap I R_\p$.
\begin{proof}
This follows from Lemma \ref{lem:symb1} and Proposition \ref{prop:qsubp}.
\end{proof}
\end{thm}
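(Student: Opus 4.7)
The plan is to combine the two preceding results, which between them have already done essentially all the work. Lemma \ref{lem:symb1} strips away the redundant intersections in the standard definition of $I^{(m)}$, replacing the index set $\ass(I)$ by $\maxass(I)$, so that
\[ I^{(m)} = \bigcap_{\p \in \maxass(I)} \bigl(R \cap I^m R_\p\bigr).\]
Proposition \ref{prop:qsubp} then identifies each localized term with a concrete monomial ideal: $R \cap I^m R_\p = Q_{\subseteq \p}^{m}$. Substituting the second equality into the first immediately yields the desired formula.

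Since this is essentially a one-line substitution, there is no real obstacle left to overcome at this stage; the difficulty was already absorbed into the proofs of the two cited results. The one thing I would double-check is that the definition of $Q_{\subseteq \p}$ used in the statement of the theorem (namely $Q_{\subseteq \p} = R \cap I R_\p$, as installed by the remark after Proposition \ref{prop:qsubp}) is compatible with the definition used inside Proposition \ref{prop:qsubp} (the intersection of the primary components $Q_i$ with $\sqrt{Q_i} \subseteq \p$ taken from some chosen primary decomposition). That compatibility is exactly the content of the $m=1$ case of Proposition \ref{prop:qsubp}, so the two definitions agree and $Q_{\subseteq \p}$ is indeed well-defined independently of the choice of primary decomposition. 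Once this is noted, the proof is simply: apply Lemma \ref{lem:symb1}, then apply Proposition \ref{prop:qsubp} termwise to the resulting intersection.
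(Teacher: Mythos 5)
Your proof is correct and takes exactly the same route as the paper: apply Lemma \ref{lem:symb1} to restrict the intersection to $\maxass(I)$, then apply Proposition \ref{prop:qsubp} termwise. Your additional remark about the compatibility of the two descriptions of $Q_{\subseteq \p}$ (via the $m=1$ case of Proposition \ref{prop:qsubp}) is the same observation the paper makes in the paragraph immediately preceding the theorem.
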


The above theorem generalizes the following well-known result for ideals without embedded primes: if $I = Q_1 \cap \cdots \cap Q_k$ is an intersection of primary monomial ideals with incomparable radicals, then $I^{(m)} = Q_1^m \cap \cdots \cap Q_k^m$ (see \cite[Exercise 5.1.29]{MR1800904}). In particular, Theorem~\ref{thm:symbpower} is already known for square-free monomial ideals. In Sections \ref{sectionsymbpoly} and \ref{sectionalpha}, we will prove containments for arbitrary monomial ideals using this theorem.

We conclude this section with two containments that compare different symbolic powers of monomial ideals.  

\begin{prop}  \label{prop:onemaxideal}
If $I$ is a monomial ideal then $I^{(r+1)} \subseteq \maxideal I^{(r)}$. 
\begin{proof}[Proof]
Let $f$ be a monomial in $I^{(r+1)}$ and let $x_i$ be a variable dividing $f$. Using Theorem \ref{thm:symbpower}, we know $I^{(r+1)} = \bigcap_{\p \in \maxass(I)} Q_{\subseteq \p}^{r+1}$, for each $\p \in \maxass(I)$. So for each particular $\p \in \maxass(I)$, we can express $f$ as $f = f_1 \cdots f_{r+1} g$ for monomials $f_i \in Q_{\subseteq \p}$ and a monomial $g \in R$. Either $x_i$ divides some $f_{j}$, giving $f/x_i = f_1 \cdots \widehat{f_j} \cdots f_{r+1} (g f_{j}/x_i) \in Q_{\subseteq \p}^{r}$ or $x_i$ divides $g$ and $f/x_i \in Q_{\subseteq \p}^{r+1} \subset Q_{\subseteq \p}^{r}$.  Since this argument works for all $\p \in \maxass(I)$, we have $f \in \maxideal I^{(r)}$ and hence $I^{(r+1)} \subseteq \maxideal I^{(r)}$.
\end{proof}
\end{prop}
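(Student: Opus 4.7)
The plan is to reduce to monomials, then use Theorem~\ref{thm:symbpower} to write $I^{(r+1)}$ as an intersection of ordinary powers of the monomial ideals $Q_{\subseteq \p}$, and finally exhibit, for any monomial $f \in I^{(r+1)}$ and any variable $x_i$ dividing $f$, a common monomial witness that $f/x_i$ lies in $Q_{\subseteq \p}^{r}$ for every $\p \in \maxass(I)$. Both $I^{(r+1)}$ and $\maxideal I^{(r)}$ are monomial ideals, so it is enough to check the containment on monomial generators of $I^{(r+1)}$.

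First I would fix a monomial $f \in I^{(r+1)}$ and pick any variable $x_i \mid f$; the goal is to show $f/x_i \in I^{(r)}$, which by Theorem~\ref{thm:symbpower} amounts to showing $f/x_i \in Q_{\subseteq \p}^{r}$ for each $\p \in \maxass(I)$. Fix such a $\p$. Since $f \in I^{(r+1)} \subseteq Q_{\subseteq \p}^{r+1}$, there is a factorization $f = f_1 \cdots f_{r+1} g$ with each $f_j$ a monomial in $Q_{\subseteq \p}$ and $g$ a monomial in $R$. Because $x_i$ divides the product $f_1 \cdots f_{r+1} g$, it must divide one of the factors.

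Next I would split on which factor $x_i$ divides. If $x_i \mid f_j$ for some $j$, then
\[ f/x_i \;=\; f_1 \cdots \widehat{f_j} \cdots f_{r+1} \cdot \bigl( g \, f_j / x_i \bigr), \]
a product of $r$ monomials from $Q_{\subseteq \p}$ times a monomial in $R$, so $f/x_i \in Q_{\subseteq \p}^{r}$. Otherwise $x_i \mid g$, in which case $f/x_i = f_1 \cdots f_{r+1} (g/x_i) \in Q_{\subseteq \p}^{r+1} \subseteq Q_{\subseteq \p}^{r}$. Either way, $f/x_i \in Q_{\subseteq \p}^{r}$.

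Since the same $x_i$ works uniformly across all $\p \in \maxass(I)$ (we only need to refactor $f$ per prime), intersecting over $\maxass(I)$ and applying Theorem~\ref{thm:symbpower} gives $f/x_i \in I^{(r)}$, hence $f = x_i \cdot (f/x_i) \in \maxideal I^{(r)}$. There is no real obstacle here: the only point worth flagging is that the factorizations $f = f_1 \cdots f_{r+1} g$ may depend on $\p$, but the variable $x_i$ we divide out does not, which is exactly what is needed to pass through the intersection.
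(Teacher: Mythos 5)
Your proof is correct and follows essentially the same approach as the paper: reduce to monomials, apply Theorem~\ref{thm:symbpower} to write $I^{(r+1)}$ as $\bigcap_{\p} Q_{\subseteq \p}^{r+1}$, and for a fixed variable $x_i$ dividing $f$, split on whether $x_i$ divides one of the $Q_{\subseteq \p}$-factors or the residual monomial. Your explicit remark that the factorizations depend on $\p$ while the chosen $x_i$ does not is exactly the point the paper relies on implicitly.
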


Proposition \ref{prop:onemaxideal} shows that $I^{(r+s)} \subseteq \maxideal^{s} I^{(r)}$ for any monomial ideal $I$ and positive integers $r$ and $s$. 
In particular, we may prove containments of the form of Conjectures \ref{conj1} and \ref{conj2} with the regular power of $I$ replaced with a symbolic power (e.g., $I^{(er)} \subseteq \maxideal^{r(e-1)} I^{(r)}$).

Often Proposition \ref{prop:onemaxideal} is not optimal in the sense that $I^{(r+1)}$ may be contained in $\maxideal^s I^{(r)}$ for some $s > 1$. 

\begin{prop} \label{prop:sigmamaxideals}
Let $I$ be a monomial ideal with big-height $e$ and let $\sigma(I)$ be the minimum number of variables in the support of a monomial in $I$. 
For any integer $r \geq 1$, $I^{(r+e)} \subseteq \maxideal^{\sigma(I)} I^{(r)}$.
\begin{proof}
Take a monomial $f \in I^{(r+e)}$. Let $h$ be the product of the variables dividing $f$.
In other words, $h$ is the square-free part of $f$.

Since $I^{(r+e)} \subseteq Q_{\subseteq \p}^{r+e}$ for each $\p \in \maxass(I)$, we can express $f$ as
$f = f_1 \cdots f_{r+e}g$ for monomials $f_i \in \gens(Q_{\subseteq \p})$ and a monomial $g \in R$. Since each $f_i$ is a generator of $Q_{\subseteq \p}$, it is a product of variables in $\p$. Thus any product of variables not in $\p$ which divides $f$ must divide $g$.  In particular, the product of the variables in $h$ which are not in $\p$ must divide $g$.

Since $h$ is square-free and $\height \p \leq e$, there are at most $e$ variables in the support of $h$ that are in $\p$.
These variables are factors of at most $e$ of the $f_i$. Therefore $f/h \in Q_{\subseteq\p}^r$ and hence $f \in \maxideal^{\deg h} I^{(r)}$. As $f \in I^{(r+e)} \subseteq I$, we know $\deg h \geq \sigma(I)$ and hence $f \in \maxideal^{\sigma(I)} I^{(r)}$.  As $f$ was an arbitrary monomial in $I^{(r+e)}$ we have the desired containment.
\end{proof}
\end{prop}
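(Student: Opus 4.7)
The plan is to take an arbitrary monomial $f \in I^{(r+e)}$ and produce a square-free divisor $h$ of $f$ with $\deg h \geq \sigma(I)$ such that $f/h \in I^{(r)}$; this will give $f \in \maxideal^{\deg h} I^{(r)} \subseteq \maxideal^{\sigma(I)} I^{(r)}$. The most natural candidate for $h$ is the square-free part of $f$, i.e., the product of the distinct variables in the support of $f$. Since $f \in I^{(r+e)} \subseteq I$, $f$ is divisible by some minimal generator of $I$, whose support contains at least $\sigma(I)$ variables, so immediately $\deg h \geq \sigma(I)$.

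Next, I would use Theorem~\ref{thm:symbpower} to reduce showing $f/h \in I^{(r)}$ to showing $f/h \in Q_{\subseteq \p}^r$ for every $\p \in \maxass(I)$. Fix such a $\p$ and, using $f \in Q_{\subseteq \p}^{r+e}$, write $f = f_1 \cdots f_{r+e}\,g$ where each $f_i \in \gens(Q_{\subseteq \p})$ and $g$ is a monomial. The crucial observation is that every generator of $Q_{\subseteq \p}$ is a monomial supported only on variables of $\p$, because $Q_{\subseteq \p}$ is an intersection of primary components of $I$, each of which (by the standard fact recalled at the start of Section~\ref{sectionprimary}) may be taken to be generated by powers of variables lying in a sub-prime of $\p$. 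Split $h = h_\p \cdot h_{\notin \p}$ according to whether each variable lies in $\p$ or not; the previous observation forces $h_{\notin \p} \mid g$, and the bound $\height \p \leq e$ forces $\deg h_\p \leq e$.

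The main work is then to \emph{absorb} $h_\p$ at the cost of discarding at most $e$ of the factors $f_i$. For each variable $x_j \mid h_\p$ that does not already divide $g$, pick an index $i(j)$ with $x_j \mid f_{i(j)}$ and mark it. Let $T$ be the set of marked indices, so $|T| \leq \deg h_\p \leq e$. Since $h_\p$ is square-free, for each $i \in T$ the distinct variables $\{x_j : i(j) = i\}$ collectively divide $f_i$, so $f_i / \prod_{j : i(j)=i} x_j$ is a monomial. Reassembling,
\[
\frac{f}{h} \;=\; \Bigl(\prod_{i \notin T} f_i\Bigr) \cdot \tilde g,
\]
where $\tilde g$ is a monomial absorbing the leftover quotients together with $g / h_{\notin \p}$. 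The first factor is a product of at least $(r+e) - e = r$ generators of $Q_{\subseteq \p}$, so $f/h \in Q_{\subseteq \p}^{\,r}$, as required.

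The main obstacle is the bookkeeping in this absorption step: one must use the square-freeness of $h_\p$ so that the several variables targeting the same $f_i$ can be divided out simultaneously, and count carefully so that the number of sacrificed factors does not exceed $e$ even when several $x_j$'s share an $i(j)$ (which only helps). Everything else is a direct application of Theorem~\ref{thm:symbpower}: intersecting the containments $f/h \in Q_{\subseteq \p}^{\,r}$ over $\p \in \maxass(I)$ yields $f/h \in I^{(r)}$, and hence $f \in \maxideal^{\deg h} I^{(r)} \subseteq \maxideal^{\sigma(I)} I^{(r)}$.
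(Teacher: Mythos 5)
Your proposal is correct and follows essentially the same route as the paper's proof: take the square-free part $h$ of $f$, note $\deg h \geq \sigma(I)$, and for each $\p \in \maxass(I)$ show $f/h \in Q_{\subseteq\p}^r$ by peeling off at most $e$ of the $r+e$ generator factors. You make the absorption step (assigning each variable of $h_\p$ to a factor $f_{i(j)}$ or to $g$, and bounding $|T| \leq e$) more explicit than the paper's one-line claim that these variables ``are factors of at most $e$ of the $f_i$,'' which is a welcome clarification but not a different argument.
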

If $\sigma(I) > e$ then Proposition \ref{prop:sigmamaxideals} is an improvement on Proposition \ref{prop:onemaxideal}. This occurs when $I$ has many disjoint associated primes.

\begin{ex} The ideal 
\begin{align*}
	I 
	&= (xyz, xyw, xzw, yzw) \\
	&= (x,y) \cap (x,z) \cap (x,w) \cap (y,z) \cap (y,w) \cap (z,w)
\end{align*}
has big-height $e=2$ and $\sigma(I) = 3$. Thus, $I^{(r+2)} \subseteq \maxideal^3 I^{(r)}$ for all $r$.
\end{ex}

%%%%%%%%%%%%%%%%%%%%%%%%%%%%%%%%%%%%%%%%%%%%%%%%%%%%%%%%%%%%%%%%

\section{A General Statement for Square-Free Monomial Ideals}\label{sqgen}

In this section we capture Conjectures~\ref{refinedEV2} and \ref{conj4} for square-free monomial ideals by proving a more general statement.  

\begin{rem}
Proposition 3.10 of \cite{refBCH} proves the following implications:
\begin{itemize}
\item[(1)] Conjecture \ref{conj2} implies Conjecture \ref{EvoEssenconj2}.
\item[(2)] Conjecture \ref{refinedEV2} implies Conjectures \ref{conj:chud}, \ref{refinedChud} and \ref{conj3}, and, when $I$ is radical, Conjecture \ref{conj1}.
\item[(3)] Conjecture \ref{ourconj} implies Conjectures \ref{conj2} and \ref{EvoEssenconj2},
and, when $I$ is the radical ideal of a finite set of points, Conjecture \ref{Essenconj}.
\end{itemize}
These implications are also true for monomial ideals.  The proofs are the same as given in \cite{refBCH}.  So, once we prove Conjectures~\ref{refinedEV2} and \ref{conj4}, we will have verified all of the conjectures of Harbourne-Huneke with the exception of Conjecture~\ref{p2conj} for square-free monomial ideals. We will also have proven Chudnovsky's conjecture for square-free monomial ideals (Conjecture \ref{conj:chud}).
\end{rem}

Recall that all of our ideals will be homogeneous ideals in $\field[x_0,\ldots,x_n]$ and that $\maxideal = (x_0,\ldots,x_n)$.
\begin{thm}\label{thm.sqfree}
Suppose $I$ is a square-free monomial ideal.  Then for all positive integers $m$, $t$ and $r$ we have the containment
$$I^{(t(m+e-1)-e+r)} \subseteq \maxideal^{(t-1)(e-1)+r-1}(I^{(m)})^t$$
where $e$ is the big-height of $I$.
\end{thm}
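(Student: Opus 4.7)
First reduce to $r = 1$ by iterating Proposition~\ref{prop:onemaxideal}, which yields
\[ I^{(t(m+e-1)-e+r)} \subseteq \maxideal^{r-1}\, I^{(t(m+e-1)-e+1)}, \]
so it suffices to prove the $r = 1$ containment
\[ I^{(tm+(t-1)(e-1))} \subseteq \maxideal^{(t-1)(e-1)} (I^{(m)})^t. \]
I would proceed by induction on $t$, with the base $t = 1$ trivial. The inductive step reduces to the \emph{extraction lemma}: for every monomial $f \in I^{(M)}$ with $M \geq m + e - 1$, there is a factorization $f = f_1 \cdot h \cdot g$ with $f_1 \in I^{(m)}$ a monomial, $h$ a monomial of degree $e - 1$, and $g \in I^{(M - m - (e-1))}$. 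Applying the lemma with $M = tm + (t-1)(e-1)$ places $g$ in $I^{((t-1)m + (t-2)(e-1))}$, and the inductive hypothesis then puts $g$ into $\maxideal^{(t-2)(e-1)} (I^{(m)})^{t-1}$, giving $f \in \maxideal^{(t-1)(e-1)}(I^{(m)})^t$.

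To prove the extraction lemma, use Theorem~\ref{thm:symbpower} to rewrite $f \in I^{(M)}$ (for a square-free monomial ideal $I$) as the coordinate condition $\sum_{j \in S_\p} a_j \geq M$ for every associated prime $\p = (x_j : j \in S_\p)$, where $a$ is the exponent vector of $f$. The lemma then becomes a combinatorial claim: decompose $a = \beta + \gamma + \delta$ with $\beta, \gamma, \delta \in \mathbb Z_{\geq 0}^{n+1}$, $\sum_{j \in S_\p} \beta_j \geq m$ and $\sum_{j \in S_\p} \delta_j \geq M - m - (e-1)$ for every $\p$, and $|\gamma| = e - 1$. The implicit upper bound $\sum_{j \in S_\p} \gamma_j \leq (e-1) + (\sum_{j \in S_\p} a_j - M)$ needed to make the three $\p$-sums compatible is automatic from $|\gamma| = e-1$ together with the hypothesis $\sum_{j \in S_\p} a_j \geq M$.

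\textbf{Main obstacle.} The crux is producing the integer decomposition. Fractional feasibility is immediate (scale $a$ suitably to land in the appropriate Minkowski sum of symbolic polyhedra), but integer feasibility requires a genuinely combinatorial argument. My plan is to choose $\beta$ as a monomial divisor of $f$ in $I^{(m)}$ that is \emph{balanced}, meaning $\sum_{j \in S_\p} \beta_j \leq m + (e-1) + (\sum_{j \in S_\p} a_j - M)$ for every $\p$; once $\beta$ is fixed, any monomial $\gamma$ of degree $e-1$ dividing $f/\beta$ will do, and $\delta = f/(\beta\gamma)$ automatically satisfies the required $\p$-bound. Establishing the existence of a balanced $\beta$ is the technical heart of the argument: I would attempt a greedy/exchange construction starting from an arbitrary minimal $\beta_0 \in I^{(m)}$ dividing $f$, and if $\beta_0$ overshoots the balanced bound at some prime $\p$, reroute the excess weight onto a variable outside $\p$ (which must be available by the $\p$-constraint on $f$ coming from other associated primes) while preserving $I^{(m)}$-membership. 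The big-height bound $e$ is precisely what controls how much total excess can accumulate at any single prime, and both the square-free hypothesis and the bound $e$ enter essentially in verifying that such rerouting steps remain feasible until a balanced $\beta$ is reached.
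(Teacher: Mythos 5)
The reduction to $r=1$ via Proposition~\ref{prop:onemaxideal} is correct and clean, and the induction on $t$ is set up properly. But your argument then rests entirely on the ``extraction lemma,'' which you state but do not prove: the greedy/exchange construction is described only as something you ``would attempt,'' and this is precisely where all of the difficulty lives. Once you have reduced to $r=1$ and set up the induction, the extraction lemma \emph{is} the theorem (in fact it is essentially the full $t=2$ case), so the plan does not actually compress the problem.

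There is also a concrete error in the sketch. You define a balanced $\beta$ by $\sum_{j\in S_\p}\beta_j \le m + (e-1) + (\sum_{j\in S_\p}a_j - M)$ and then assert that, once such a $\beta$ is fixed, ``any monomial $\gamma$ of degree $e-1$ dividing $f/\beta$ will do.'' This is false. Take $I=(x_0,x_1)\cap(x_2,x_3)$, so $e=2$, with $m=1$, $M=3$, and $\abar=(3,0,3,0)$. Then $\beta=(2,0,1,0)$ is balanced in your sense, $\gamma=(1,0,0,0)$ divides $f/\beta=x_0x_2^2$ and has degree $e-1=1$, yet $\delta=(0,0,2,0)$ gives $\sum_{j\in\{0,1\}}\delta_j=0 < 1 = M-m-(e-1)$. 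So either the balance condition must be tightened by dropping the $e-1$ term (which makes the exchange argument harder, since $\beta$ is then forced to be ``tight'' against the slack), or $\gamma$ must be chosen carefully rather than arbitrarily. Either way, this needs an actual argument, and you have not given one.

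By contrast the paper proves the theorem in one shot, with no induction and with all of $m,t,r$ handled simultaneously. The key device is to apply the division algorithm to each exponent, $a_j = d_jt + a_j'$ with $0\le a_j'\le t-1$; the bound $\sum_{j\in E_i}a_j' \le (t-1)|E_i|\le (t-1)e$ converts the symbolic-power hypothesis into $\sum_{j\in E_i}d_j\ge m$ for every component. Reducing the $d_j$'s until some inequality $\sum_{j\in E_\ell}d_j' = m$ becomes tight yields the factorization $\xbar^\abar=\bigl(\prod_j x_j^{d_j'}\bigr)^t\cdot g$ with $\prod_j x_j^{d_j'}\in I^{(m)}$, and the tight index $\ell$ immediately gives $\deg g \ge (t-1)(e-1)+r-1$. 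You should compare this to your approach: the paper's division trick and the ``reduce until an equality is tight'' step do in one move what your extraction lemma would need a delicate exchange argument to accomplish, and they avoid the integer-feasibility issue entirely because the $d_j$'s are integers from the start.
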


\begin{proof}
Let $I = Q_1 \cap Q_2 \cap \cdots \cap Q_s$ be the primary decomposition of $I$.  Since $I$ is square-free, each ideal $Q_i$ is prime and has the form $Q_i = (x_{i_1}, \ldots, x_{i_{n_i}})$.  Let $E_i$ denote set of indices of $Q_i$, namely $E_i = \{i_1, \ldots, i_{n_i}\}$. Then
$$e = \max\{|E_i| : \, \, \mbox{$Q_i$ a primary component of $I$}\} \geq |E_i| \ \forall i.$$

Observe that
$$I^{(t(m+e-1)-e+r)} = Q_1^{t(m+e-1)-e+r} \cap  Q_2^{t(m+e-1)-e+r} \cap \cdots  \cap Q_s^{t(m+e-1)-e+r}.$$
Take any monomial $x_0^{a_0}x_1^{a_1} \cdots x_n^{a_n} \in I^{(t(m+e-1)-e+r)}$.  Then for any primary component $Q_i$ of $I$, we have
$$\sum_{j \in E_i}a_j \geq t(m+e-1)-e+r.$$
For each $a_j$, choose $d_j \in \ZZ_{\ge 0}$ such that $d_j t \leq a_j < (d_j+1)t$.  Let $a_j' = a_j - d_jt \leq t-1$. It can be seen that
$\sum_{j \in E_i} a_j' \leq (t-1)|E_i| \leq (t-1)e.$
Thus,
$$\sum_{j \in E_i}d_jt \ge t(m+e-1)-e+r - (t-1)e = t(m-1)+r.$$
Since the left hand side of the inequality is divisible by $t$, this implies that for any primary component $Q_i$ of $I$, we have
$$\sum_{j \in E_i} d_jt \geq tm, \text{ i.e., } \sum_{j \in E_i} d_j \ge m.$$

Now consider the system of inequalities $\{\sum_{j \in E_i} d_j \ge m ~|~ i=1, \dots, s\}$. By successively reducing the values of $d_j$'s (keeping them non-negative), we can choose $0 \leq d_j' \leq d_j$ such that the system of inequalities
\begin{align}\label{eq.system3}
\big\{\sum_{j \in E_i} d_j' \geq m ~|~ i = 1, \dots, s\big\} 
\end{align}
still holds, but for at least one value of $i$ we obtain an equality. Assume that $\sum_{j \in E_{\ell}} d_j' = m$ for some $1 \le \ell \le s$.

Let
\[ f =\left(\prod_{j=0}^n x_j^{d_j'}\right)^t \text{ and }
g =\prod_{j=0}^n x_j^{a_j-d_j't}.
\]
It can be seen that
$x_0^{a_0} \cdots x_n^{a_n} = fg$. Also, by (\ref{eq.system3}),
\[ \prod_{j=0}^n x_j^{d_j'} \in Q_1^m \cap Q_2^m \cap \cdots \cap Q_s^m,
\]
which in turn implies that $\prod_{j=1}^n x_j^{d_j'} \in I^{(m)}$. Thus, $f \in (I^{(m)})^t$.

We claim that $g \in \maxideal^{(t-1)(e-1)+r-1}$. Indeed, notice that $g$ is divisible by
\[\prod_{j \in E_{\ell}}x_j^{a_j}\big/\big(\prod_{j \in E_{\ell}}x_j^{d_j'}\big)^t.
\]
Furthermore,
$$\deg\left(\prod_{j \in E_{\ell}}x_j^{a_j}\right) = \sum_{j \in E_{\ell}}a_j \geq t(m+e-1)-e+r,$$
and
$$\deg\left(\left(\prod_{j \in E_{\ell}}x_j^{d_j'}\right)^t\right) = t \sum_{j \in E_{\ell}}d_j' = tm.$$
Hence,
$$\deg(g) \geq t(m+e-1)-e+r - tm = (t-1)(e-1)+r-1,$$
and so $g \in \maxideal^{(t-1)(e-1)+r-1}$. Therefore,
$$x_0^{a_0} \cdots x_n^{a_n} \in \maxideal^{(t-1)(e-1)+r-1}(I^{(m)})^t.$$
Since this is true for all monomials $x_0^{a_0} \cdots x_n^{a_n}$ in $I^{(t(m+e-1)-e+r)}$ and $I^{(t(m+e-1)-e+r)}$ is a monomial ideal, we conclude that $I^{(t(m+e-1)-e+r)} \subseteq \maxideal^{(t-1)(e-1)+r-1}(I^{(m)})^t$ as desired.
\end{proof}

When $r=1$ and $r=e$, we capture the two most general conjectures of Harbourne-Huneke and Bocci-Cooper-Harbourne (namely, Conjecture~\ref{refinedEV2} for $r=e$ and Conjecture~\ref{conj4} for $r=1$) for square-free monomial ideals.

\begin{cor}[Conjecture \ref{refinedEV2}]\label{conj3m}
If $I$ is a square-free monomial ideal, then for all positive integers $m$ and $t$ we have the containment
$$I^{(t(m+e-1))} \subseteq \maxideal^{t(e-1)}(I^{(m)})^t$$
where $e$ is the big-height of $I$.
\end{cor}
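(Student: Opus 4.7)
The plan is to obtain this as a direct specialization of Theorem \ref{thm.sqfree}, which is already available. That theorem supplies the containment
\[ I^{(t(m+e-1)-e+r)} \subseteq \maxideal^{(t-1)(e-1)+r-1}(I^{(m)})^t \]
for every positive integer $r$, and the paragraph preceding the corollary flags the $r=e$ specialization as the one that yields Conjecture \ref{refinedEV2}. So my only task is to substitute $r=e$ and check that both exponents simplify to the ones asserted in the corollary.

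Substituting $r=e$ on the left gives $t(m+e-1)-e+e = t(m+e-1)$, matching the symbolic exponent in the claim. Substituting $r=e$ into $(t-1)(e-1)+r-1$ gives $(t-1)(e-1)+(e-1) = t(e-1)$, matching the exponent of $\maxideal$ in the claim. Hence the $r=e$ instance of Theorem \ref{thm.sqfree} is literally the statement of the corollary.

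The one admissibility point worth mentioning is that $r=e$ must be a positive integer in order to apply the theorem. Since the corollary is vacuous when $I=0$ and since, by Remark \ref{rem:nplusone}, we may restrict attention to monomial ideals with $e \leq n$, we have a square-free monomial ideal with at least one associated prime of positive height, so $e \geq 1$ and the substitution is valid. There is no substantive obstacle here: the content lies entirely in Theorem \ref{thm.sqfree}, and this corollary is a one-line specialization.
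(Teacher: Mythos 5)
Your proof matches the paper's: the corollary is obtained by setting $r=e$ in Theorem~\ref{thm.sqfree}, and your arithmetic checks confirming the exponents $t(m+e-1)$ and $t(e-1)$ are correct. The brief admissibility remark that $e\geq 1$ is a sensible (if unnecessary) addition; there is no divergence from the paper's argument.
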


\noindent
\begin{proof}
The containment follows immediately from Theorem~\ref{thm.sqfree} by letting $r=e$.
\end{proof}

\begin{cor}[Conjecture \ref{conj4}]\label{conj4m}
If $I$ is a square-free monomial ideal, then for all positive integers $m$ and $t$ we have the containment
$$I^{(t(m+e-1)-e+1)} \subseteq \maxideal^{(t-1)(e-1)}(I^{(m)})^t$$
where $e$ is the big-height of $I$.
\end{cor}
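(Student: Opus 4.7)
The plan is essentially immediate: Corollary \ref{conj4m} is the specialization of Theorem \ref{thm.sqfree} at $r=1$, so the entire content of the proof is to substitute and simplify. I would write one line setting $r=1$ in the containment
\[ I^{(t(m+e-1)-e+r)} \subseteq \maxideal^{(t-1)(e-1)+r-1}(I^{(m)})^t, \]
observe that the exponent on $\maxideal$ becomes $(t-1)(e-1)+1-1 = (t-1)(e-1)$, and that the symbolic exponent on the left becomes $t(m+e-1)-e+1$. Since Theorem \ref{thm.sqfree} is already established for \emph{all} positive integers $r$ (in particular $r=1$), no additional argument is required.

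There is no real obstacle here; the only thing worth noting is that one should check $r=1$ is indeed a legitimate value (it is, since the proof of Theorem~\ref{thm.sqfree} places no lower restriction on $r$ beyond $r \geq 1$, and the arithmetic $t(m+e-1)-e+1 \geq tm$ holds whenever $t(e-1) \geq e-1$, i.e.\ whenever $t \geq 1$, so the symbolic power index on the left is a positive integer). Thus the proof should be a single sentence invoking Theorem~\ref{thm.sqfree} with $r=1$, paralleling the one-line derivation of Corollary \ref{conj3m} from $r=e$.
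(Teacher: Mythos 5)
Your proposal is correct and matches the paper's proof exactly: Corollary~\ref{conj4m} is obtained by setting $r=1$ in Theorem~\ref{thm.sqfree} and simplifying the exponent on $\maxideal$ to $(t-1)(e-1)$. Nothing further is needed.
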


\noindent
\begin{proof}
This containment is obtained by letting $r=1$ in Theorem~\ref{thm.sqfree}.
\end{proof}

\begin{rem} \label{rem.monomial}
For monomial ideals in general, if for any $j$ the powers of $x_j$ in the primary components $Q_i$'s are the same, then the same containment as in Theorem \ref{thm.sqfree} holds. The proof is exactly the same after performing a change of variables.
\end{rem}

\begin{ex}
Conjecture~\ref{conj2}, and consequently Conjecture \ref{conj4}, does not hold for all non-square-free monomial ideals. For example, let $I = (xy^2, yz^2, zx^2, xyz) = (x^2,y) \cap (y^2, z) \cap (z^2,x) \subset \field [x, y, z]$. When $r=2$, Conjecture ~\ref{conj2} suggests that $I^{(3)} \subseteq \maxideal I^2$.
The monomial $x^2y^2z^2$ is in $(x^2,y)^3 \cap (y^2,z)^3 \cap (z^2, x)^3$ and hence $x^2y^2z^2 \in I^{(3)}$ by Theorem \ref{thm:symbpower}. Although $x^2y^2z^2 \in I^2$, since $xyz$ is a generator of $I$, we do not have $x^2y^2z^2 \in \maxideal I^2$. Thus $I^{(3)} \not\subseteq \maxideal I^2$.
\end{ex}

\begin{rem}
Dumnicki \cite{refDumA} proved that $I^{(rn-(n-1))} \subseteq \maxideal^{(r-1)(n-1)}I^r$ for the ideal $I$ of generic points in when $n=3$. In addition, he shows in \cite{refDumA} and \cite{refDumB} that $I^{(rn)} \subseteq \maxideal^{r(n-1)}I^r$ for the ideal $I$ of $s$ fundamental points in $\mathbb P^n$ (i.e., points that lie at the coordinates $[1{\,:\,}0{\,:\,}\cdots{\,:\,}0], [0{\,:\,}1{\,:\,}0{\,:\,}\cdots{\,:\,}0], \ldots$). In this case where $I$ is the ideal of $s$ fundamental points in $\mathbb P^n$, $I$ is a square-free monomial ideal with big-height $n$, and so this case is covered by Corollary \ref{conj3m} with $t = r$ and $m=1$.
\end{rem}

%%%%%%%%%%%%%%%%%%%%%%%%%%%%%%%%%%%%%%%%%%%%%%%%%%%%%%%%%%%%%%%%%%%%%%%%%%%%%

\section{The Symbolic Polyhedron} \label{sectionsymbpoly}

In this section we define a polyhedron $\symbpoly(I) \subseteq \mathbb R^{n+1}$ which, when scaled by a factor of $m$, contains all lattice points $(a_0,a_1,\ldots, a_n)$ with $x_0^{a_0} x_1^{a_1} \cdots x_n^{a_n} \in I^{(m)}$. 

Let $\mathbb N$ be the set of non-negative integers. For each $\abar = (a_0, \ldots, a_n) \in \mathbb N^{n+1}$, we let $\xbar^\abar = x_0^{a_0} x_1^{a_1} \cdots x_n^{a_n}$ denote the monomial with exponent vector $\abar$. Let $\mathbb R_+$ be the set of non-negative real numbers. In particular, $\mathbb R_+^{n+1}$ is the positive orthant in $\mathbb R^{n+1}$. If $\abar = (a_0, \ldots, a_n)$ and $\bbar = (b_0, \ldots, b_n)$ are two vectors in $\mathbb R^{n+1}$ then we write $\abar \leq \bbar$ if $a_i \leq b_i$ for all $i$. This is gives partial ordering on $\mathbb R^{n+1}$ and the subsets $\mathbb R_+^{n+1}$ and $\mathbb N^{n+1}$. The monomials in $R$ are partially ordered by divisibility and we take the convention that $\xbar^\abar \leq \xbar^\bbar$ when $\xbar^\abar$ divides $\xbar^\bbar$ or, equivalently, if $\abar \leq \bbar$. 

A subset $\poly \subseteq \mathbb R^{n+1}$ is \emph{convex} if $\poly$ contains the line segment between any two points in $\poly$.
A linear combination $\lambda_1 \abar_1 + \cdots + \lambda_k \abar_k$ of vectors $\abar_i \in \mathbb R^{n+1}$ is called a \emph{conical combination}
if each $\lambda_i \geq 0$ and a \emph{convex combination} if it is a conical combination with $\sum_{i=1}^k \lambda_i = 1$.
The \emph{convex hull} of $A \subseteq \mathbb R^{n+1}$ is the smallest convex set containing $A$ and is given by the set all convex combinations of vectors in $A$. The \emph{conical hull} of $A$ is the set of all conical combinations of vectors in $A$. We denote the convex hull and conical hull of $A$ by $\conv(A)$ and $\cone(A)$, respectively.

A \emph{polyhedron} $\poly$ is a convex subset of $\mathbb R^{n+1}$ which is the intersection of a finite number of halfspaces, i.e.,  there exists $\nbar_1 , \ldots, \nbar_{k} \in \mathbb R^{n+1}$ and $c_1, \ldots, c_k \in \mathbb R$ with 
\[  \poly = \{ \abar \in \mathbb R^{n+1}  \mid \nbar_i \cdot \abar \leq c_i \text{ for $i = 1, \ldots, k$}\}.
\]
If $c_1 = \cdots = c_k = 0$ then we call $\poly$ a \emph{cone}. A polyhedron $\poly$ is a \emph{polytope} if it is bounded.

The \emph{Minkowski sum} of  $A,B \subseteq \mathbb R^{n+1}$ is
$A + B = \{ \abar + \bbar \in \mathbb R^{n+1} \mid \abar \in A, \bbar \in B\}$. On occasion, we will abuse notation and write $\abar + B$ for the Minkowski sum $\{ \abar \} + B$ where $\abar \in \mathbb R^{n+1}$ and $B \subseteq \mathbb R^{n+1}$.

Motzkin showed that every polytope is the convex hull of a finite number of points and every cone is the conical hull of a finite number of points \cite{MR693096}.
Furthermore, any polyhedron is the Minkowski sum of a unique cone, called the \emph{recession cone}, and a (non-unique) polytope. The recession cone of a polyhedron $\poly$ is given by
\[ C(\poly) = \{ \abar \in \mathbb R^{n+1} \mid \bbar + \lambda \abar \in \poly \text{ for all } \bbar \in \poly, \lambda \geq 0\}.
\]
If $\poly = \conv(\abar_1, \ldots, \abar_k)$ is a polytope and the convex hull of any proper subset of $\abar_1, \ldots, \abar_k$ is a proper subset of $\poly$ then we call $\abar_1, \ldots, \abar_k$ the \emph{vertices} of $\poly$. Similarly, if $\poly = \cone(\rbar_1, \ldots, \rbar_k)$ is a cone and the conical hull of any subset of $\rbar_1, \ldots, \rbar_k$ is a proper subset of $\poly$ then we call $\rbar_1, \ldots, \rbar_k$ the \emph{rays} of $\poly$.

We will need a version of Carath\'eodory's Theorem for polyhedra with non-zero recession cones.

\begin{thm}[Carath\'eodory's Theorem]
Let $\poly = \mathcal Q + C(\poly) \subseteq \mathbb R^{m}$ be a polyhedron for which $\mathcal Q$ is a polytope and $C(\poly)$ is the recession cone of $\poly$.
If $C(\poly) \neq \{ 0 \}$ then every point in $\poly$ can be expressed as the sum of a convex combination of at most $m$ vertices of $\mathcal Q$ and a conical combination of the rays of $C(\poly)$.

\begin{proof}
Carath\'eodory's Theorem \cite[Proposition 1.15]{MR1311028}
tells us that a point in an $n$-dimensional polytope is a convex combination of at most $n+1$ vertices of the polytope. So, we are done if $\dim \mathcal Q < m$. 

Assume that $\mathcal Q$ is $m$-dimensional and express $\abar \in \poly$ as $\abar = \bbar + \rbar$ for $\bbar \in \mathcal Q$ and $\rbar \in C(\poly)$. Take $\sbar \in C(\poly)$ with $\sbar \neq 0$.
As $\mathcal Q$ is compact, there exists some $\lambda \in \mathbb R$ with $\bbar'= \bbar - \lambda \sbar$ on the boundary of $\mathcal Q$. Furthermore, $\abar$ can be expressed as $\abar = \bbar' + (\lambda \sbar + \rbar)$ where $\lambda \sbar + \rbar \in C(\poly)$.

Since $\bbar'$ is on the boundary of $\mathcal Q$ and $\mathcal Q$ is full dimensional, $\bbar'$ is not in the relative interior of $\mathcal Q$ and hence $\bbar'$ is contained in a proper face of $\mathcal Q$. Thus, $\bbar'$ can be written as a convex combination of at most $m$ vertices of the $(m-1)$-dimensional (or lower) face. Since the vertices of a face of $\mathcal Q$ are vertices of $\mathcal Q$, we are done.
\end{proof}
\end{thm}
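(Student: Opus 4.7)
The plan is to bootstrap off the classical Carath\'eodory theorem for polytopes, exploiting a nonzero recession direction to ``absorb'' one vertex into the cone part. Given an arbitrary $\abar \in \poly$, I first decompose $\abar = \bbar + \rbar$ with $\bbar \in \mathcal{Q}$ and $\rbar \in C(\poly)$. The classical theorem already expresses $\bbar$ as a convex combination of at most $\dim(\mathcal{Q}) + 1$ vertices of $\mathcal{Q}$, which is $\leq m$ whenever $\mathcal{Q}$ fails to be full-dimensional. So the only case requiring work is $\dim(\mathcal{Q}) = m$, and it is precisely here that I will need the hypothesis $C(\poly) \neq \{\zerobar\}$.

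In the full-dimensional case, I pick any nonzero $\sbar \in C(\poly)$ and translate $\bbar$ in the direction $-\sbar$. Compactness of $\mathcal{Q}$ forces the ray $\{\bbar - \lambda\sbar : \lambda \geq 0\}$ to exit $\mathcal{Q}$, so there is a largest $\lambda^* \geq 0$ with $\bbar' := \bbar - \lambda^*\sbar \in \mathcal{Q}$, and by construction $\bbar'$ lies on the topological boundary of $\mathcal{Q}$. Since $\mathcal{Q}$ is full-dimensional, a boundary point must sit inside a proper face $F$ of $\mathcal{Q}$ with $\dim F \leq m-1$, so classical Carath\'eodory applied to the lower-dimensional polytope $F$ writes $\bbar'$ as a convex combination of at most $(\dim F) + 1 \leq m$ vertices of $F$ — each of which is already a vertex of $\mathcal{Q}$.

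Finally, I reassemble $\abar = \bbar' + (\lambda^*\sbar + \rbar)$. The second summand lies in $C(\poly)$ because recession cones are closed under nonnegative sums, and by the standard Minkowski--Weyl description of a polyhedral cone it is a conical combination of the rays of $C(\poly)$. The main obstacle is the ``push to the boundary'' step: it requires both the compactness of $\mathcal{Q}$ (so that $\lambda^*$ is finite) and the full dimensionality of $\mathcal{Q}$ (so that every boundary point sits on a lower-dimensional face). If instead $C(\poly) = \{\zerobar\}$, then no translation direction $\sbar$ is available, and one is indeed stuck with the $m+1$ bound of the ordinary theorem; the improvement to $m$ in the statement is exactly the ``interest'' earned on one free recession direction.
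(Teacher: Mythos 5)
Your proof is correct and follows essentially the same route as the paper's: decompose $\abar = \bbar + \rbar$, handle the non-full-dimensional case by classical Carath\'eodory, and in the full-dimensional case push $\bbar$ along $-\sbar$ until it hits the boundary, where it lies in a proper face of dimension at most $m-1$. The only substantive differences are that you are slightly more careful on two points the paper elides: you explicitly take the \emph{largest} $\lambda^* \geq 0$ with $\bbar - \lambda^*\sbar \in \mathcal Q$, which guarantees $\lambda^*\sbar + \rbar$ really lands back in $C(\poly)$ (the paper only says ``there exists some $\lambda \in \mathbb R$,'' which as written could be negative), and you explicitly invoke Minkowski--Weyl to express the cone part as a conical combination of the rays of $C(\poly)$, which the statement asserts but the paper's proof leaves implicit.
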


We let $\lattice M  = \{ \abar \in \mathbb N^{n+1} \mid \xbar^\abar \in M\}$ be the set of lattice points that are the exponent vectors of monomials in the monomial ideal or finite set of monomials $M$. 

\begin{lemma} \label{lemma:cone}
Suppose $I$ is a non-zero monomial ideal. The convex hull of $\lattice I$ is
\[ \conv(\lattice I) = \conv( \lattice{\gens(I)}) + \mathbb R^{n+1}_+.
\]
In particular, the recession cone of $\conv(\lattice I)$ is $\mathbb R^{n+1}_+$.

\begin{proof}
Since each $\abar \in \lattice I$ is in $\conv(\lattice{\gens(I)}) + \mathbb R^{n+1}_+$ and $\conv(\lattice{I})$ is the smallest convex set containing $\lattice I$, we know $\conv(\lattice I) \subseteq \conv(\lattice{\gens(I)}) + \mathbb R^{n+1}_+$.

Let $\gens(I) = \{ \xbar^{\abar_1}, \ldots, \xbar^{\abar_k} \}$. If $\bbar \in \conv(\lattice{\gens(I)}) + \mathbb R^{n+1}_+$ then $\bbar = \lambda_1 \abar_1 + \cdots + \lambda_k \abar_k + \rbar$ where $\lambda_1 + \cdots + \lambda_k = 1$, $\lambda_i \geq 0$ and $\rbar \in \mathbb R^{n+1}_+$. As $\lambda_1 + \cdots + \lambda_k = 1$, we know there is some index $j$ with $\lambda_j \neq 0$. Thus, 
\[ \bbar = \lambda_1 \abar_1 + \cdots + \lambda_{j-1} \abar_{j-1} +  \lambda_j \left(\abar_j + \frac{1}{\lambda_j}\rbar\right) + \lambda_{j+1} \abar_{j+1} + \cdots + \lambda_k \abar_k
\] 
and hence $\bbar \in \conv\left((\abar_1 + \mathbb R^{n+1}_+) \cup \cdots \cup (\abar_k + \mathbb R^{n+1}_+) \right)$. That is,
\[ \conv(\lattice{\gens(I)}) + \mathbb R^{n+1}_+ 
\subseteq \conv\left((\abar_1 + \mathbb R^{n+1}_+) \cup \cdots \cup (\abar_k + \mathbb R^{n+1}_+) \right).
\]

One can show by induction on $n$ that the positive orthant $\mathbb R^{n+1}_+$ is the convex hull of $\mathbb N^{n+1}$. Consequently, the translated orthant $\abar_i  + \mathbb R^{n+1}_+ = \{ \abar_i + \rbar \mid \rbar \in \mathbb R^{n+1}_+\}$ can be expressed as $\abar_i + \mathbb R^{n+1}_+ = \conv(\abar_i +  \mathbb N^{n+1})$.

Since $\conv(\lattice I)$ contains $\abar_i + \mathbb N^{n+1}$ for each $i$, it contains $\abar_i + \mathbb R^{n+1}_+$ for each $i$. Therefore $\conv(\lattice I)$ contains $(\abar_1 + \mathbb R^{n+1}_+) \cup \cdots \cup (\abar_k + \mathbb R^{n+1}_+)$ and hence its convex hull.
Thus, $\conv(\lattice{\gens(I)}) + \mathbb R^{n+1}_+ 
%\subseteq \conv\left((\abar_1 + \mathbb R^{n+1}_+) \cup \cdots \cup (\abar_k + \mathbb R^{n+1}_+) \right)
\subseteq \conv(\lattice I)$.
\end{proof}
\end{lemma}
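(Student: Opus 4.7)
The plan is to establish the two set inclusions separately and then deduce the recession-cone statement. The key observation driving both directions is that $I$ is a monomial ideal, so
\[ \lattice{I} = \bigcup_{i=1}^{k}(\abar_i + \mathbb N^{n+1}), \]
where $\abar_1, \ldots, \abar_k$ are the exponent vectors of the minimal monomial generators of $I$ (a finite list by Dickson's lemma). Every monomial in $I$ is a product of some $\xbar^{\abar_i}$ with a monomial in $R$, which translates at the level of exponent vectors to the above union.

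For the inclusion $\conv(\lattice{I}) \subseteq \conv(\lattice{\gens(I)}) + \mathbb R^{n+1}_+$, I would first prove the inclusion at the level of lattice points: each $\abar \in \lattice{I}$ is of the form $\abar_i + \vbar$ for some generator exponent $\abar_i$ and some $\vbar \in \mathbb N^{n+1} \subseteq \mathbb R^{n+1}_+$, so $\abar \in \conv(\lattice{\gens(I)}) + \mathbb R^{n+1}_+$. Because the right-hand side is convex (it is a Minkowski sum of convex sets), taking convex hulls preserves the inclusion.

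For the reverse inclusion, I would first verify that $\conv(\mathbb N^{n+1}) = \mathbb R^{n+1}_+$; this can be done by induction on the dimension, or directly by observing that any point of the positive orthant is a convex combination of its rounded neighbors. Consequently each translated orthant $\abar_i + \mathbb R^{n+1}_+ = \conv(\abar_i + \mathbb N^{n+1})$ sits inside $\conv(\lattice{I})$. The conclusion then follows from the elementary identity
\[ \conv\Bigl(\bigcup_{i=1}^{k}(\abar_i + C)\Bigr) = \conv\{\abar_1, \ldots, \abar_k\} + C \]
valid for any convex $C \subseteq \mathbb R^{n+1}$, applied with $C = \mathbb R^{n+1}_+$. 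Once both inclusions are in hand, the recession-cone claim is immediate: since $\gens(I)$ is finite, $\conv(\lattice{\gens(I)})$ is a polytope, so writing $\conv(\lattice{I})$ as a polytope plus $\mathbb R^{n+1}_+$ exhibits the recession cone. The main technical point I would want to write out carefully is the Minkowski/convex-hull identity above; it is a short manipulation with convex combinations, but easy to botch if one is not careful about the case distinction when one of the coefficients of the $\abar_i$ vanishes.
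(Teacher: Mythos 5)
Your proposal is correct and follows essentially the same route as the paper: forward inclusion from convexity of the right-hand side, reverse inclusion via $\conv(\mathbb N^{n+1}) = \mathbb R^{n+1}_+$ plus a manipulation that distributes the positive-orthant contribution across the convex combination. The one place your exposition and the paper's diverge is in how that manipulation is packaged. The paper picks some $\lambda_j > 0$ and folds the entire remainder $\rbar$ into $\abar_j$ via $\abar_j + \rbar/\lambda_j$, which is why they need to invoke the existence of a nonzero coefficient. Your abstracted identity
\[ \conv\Bigl(\bigcup_{i=1}^{k}(\abar_i + C)\Bigr) = \conv\{\abar_1,\ldots,\abar_k\} + C \]
for convex $C$ is actually cleaner, and the case distinction you worry about does not arise: given $\bbar = \sum_i \lambda_i \abar_i + \cbar$ with $\sum_i \lambda_i = 1$ and $\cbar \in C$, just write $\bbar = \sum_i \lambda_i(\abar_i + \cbar)$, a convex combination of points of $\bigcup_i(\abar_i + C)$ with no need to single out a nonzero $\lambda_j$. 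So the identity is safe to use as stated, and your treatment of the recession-cone claim (polytope plus $\mathbb R^{n+1}_+$ via the Motzkin decomposition) matches the paper's.
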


\begin{defn} The \emph{symbolic polyhedron} of a monomial ideal $I$ is the polyhedron $\symbpoly \subseteq \mathbb R^{n+1}$ given by
\[ \symbpoly =  \bigcap_{\p \in \maxass(I)} \conv(\lattice{Q_{\subseteq \p}})
\]
where $Q_{\subseteq \p} = R \cap I R_\p$.
\end{defn}

\begin{thm}  \label{thm:symbcontainment}
If $\symbpoly$ is the symbolic polyhedron of a monomial ideal $I$ then $\lattice{I^{(m)}} \subset m\symbpoly$ for all integers $m \geq 1$.
\begin{proof}
From Theorem \ref{thm:symbpower}, $I^{(m)} = \bigcap_{\p \in \maxass(I)} Q_{\subseteq \p}^m$. If $\abar \in \lattice{I^{(m)}}$ then $\abar \in \lattice{Q_{\subseteq \p}^m}$ for each $\p \in \maxass(I)$ and hence $\abar = \sum_{i=1}^m \bbar_i + \rbar$ where $\rbar \in \mathbb N^{n+1}$ and $\xbar^{\bbar_i} \in \gens(Q_{\subseteq \p})$. By Lemma~\ref{lemma:cone}, $\sum_{i=1}^m  \frac{1}{m} \bbar_i + \frac{1}{m} \rbar \in \conv(\lattice{Q_{\subseteq \p}})$ and therefore $\abar \in m \conv(\lattice{Q_{\subseteq \p}})$ for each $\p$. Since $m \symbpoly = \bigcap_{\p \in \maxass(I)} m \conv(Q_{\subseteq \p})$, we have $\abar \in m \symbpoly$.
\end{proof}
\end{thm}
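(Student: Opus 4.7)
The plan is to argue pointwise: fix $\abar \in \lattice{I^{(m)}}$ and show $\abar \in m \conv(\lattice{Q_{\subseteq \p}})$ for each $\p \in \maxass(I)$, which by the definition of $\symbpoly$ as an intersection immediately gives $\abar \in m \symbpoly$. The two tools I want to combine are Theorem~\ref{thm:symbpower}, which replaces the symbolic power with a finite intersection of ordinary powers of the $Q_{\subseteq \p}$, and Lemma~\ref{lemma:cone}, which identifies the convex hull of the lattice points of a monomial ideal with the Minkowski sum of the convex hull of the generators' exponent vectors and $\mathbb R^{n+1}_+$.

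First, I would invoke Theorem~\ref{thm:symbpower} to deduce that $\abar$ lies in $\lattice{Q_{\subseteq \p}^m}$ for every $\p \in \maxass(I)$. The containment $\xbar^\abar \in Q_{\subseteq \p}^m$ means that $\xbar^\abar$ is divisible by a product $\xbar^{\bbar_1} \xbar^{\bbar_2} \cdots \xbar^{\bbar_m}$ of (not necessarily distinct) minimal generators of $Q_{\subseteq \p}$; thus we can write $\abar = \bbar_1 + \cdots + \bbar_m + \rbar$ with each $\bbar_i \in \lattice{\gens(Q_{\subseteq \p})}$ and $\rbar \in \mathbb N^{n+1}$.

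Next I would rescale by $\tfrac{1}{m}$ to display
\[ \frac{1}{m}\abar \;=\; \sum_{i=1}^m \frac{1}{m}\bbar_i \;+\; \frac{1}{m}\rbar, \]
recognizing the first sum as a convex combination of vectors in $\lattice{\gens(Q_{\subseteq \p})}$ (the coefficients are non-negative and sum to $1$) and the second summand as an element of $\mathbb R^{n+1}_+$ (since $\rbar \in \mathbb N^{n+1} \subseteq \mathbb R^{n+1}_+$). By Lemma~\ref{lemma:cone}, this displays $\tfrac{1}{m}\abar$ as an element of $\conv(\lattice{\gens(Q_{\subseteq \p})}) + \mathbb R^{n+1}_+ = \conv(\lattice{Q_{\subseteq \p}})$, so $\abar \in m\conv(\lattice{Q_{\subseteq \p}})$.

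Because this argument is uniform in $\p$, intersecting over $\p \in \maxass(I)$ yields $\abar \in m\symbpoly$, which proves the claim. I do not foresee a genuine obstacle here: the content of the statement is really a bookkeeping translation between the generator-based description of $Q_{\subseteq \p}^m$ and the geometric description of $\conv(\lattice{Q_{\subseteq \p}})$, and both descriptions are already in hand. The only minor subtlety to be careful about is that the decomposition of $\xbar^\abar$ as a product of $m$ generators times a monomial requires the fact (immediate from the definition of an ordinary power of a monomial ideal) that every monomial in $Q_{\subseteq \p}^m$ admits such a factorization; this is where Theorem~\ref{thm:symbpower} is essential, since it converts symbolic powers to ordinary powers of the auxiliary ideals $Q_{\subseteq \p}$.
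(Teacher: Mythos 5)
Your proposal is correct and follows essentially the same route as the paper: invoke Theorem~\ref{thm:symbpower} to reduce to the ordinary powers $Q_{\subseteq \p}^m$, factor $\xbar^\abar$ as a product of $m$ generators times a monomial, rescale by $1/m$, and apply Lemma~\ref{lemma:cone}. The only difference is cosmetic — you spell out the rescaling and the role of the convex combination a bit more explicitly than the paper does.
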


%%%%%%%%%%%%%%%%%%%%%%%%%%%%%%%%%%%%%%%%%%%%%%%%%%%%%%%%%%%%%%%%%%%%%%%%%%%%%%

%\section{The $\alpha$-Invariant of the Symbolic Polyhedron}
\section{The Waldschmidt Constant of a Monomial Ideal} \label{sectionalpha}

The \emph{Waldschmidt constant} of a non-zero homogeneous ideal $I \subseteq R = \field[x_0,\ldots, x_n]$ is given by 
\[ \gamma(I) = \lim_{m \to \infty} \alpha(I^{(m)})/m.
\]
This limit always exists and $\alpha(I^{(m)})/m \geq \gamma(I)$ for all $m$ (see \cite[Lemma 2.3.1]{refBH}).

In this section, we show that if $\symbpoly$ is the symbolic polyhedron of an arbitrary monomial ideal $I$, then $\alpha(\symbpoly) = \gamma(I)$. Thus, this invariant of $I$ can be extracted from the symbolic polyhedron. 
Finally, we restate Chudnovsky's conjecture for monomial ideals and, assuming that Chudnovsky's conjecture holds, we prove that Conjecture \ref{conj1} would follow in certain instances. Moreover, we give some families of monomial ideals for which Chudnovsky's conjecture holds.

%\begin{comment}
%Introduct waldschmidt constant. Say that we have a way of computing it directly. Also, our last two results take statements about alpha and produce containments.
%
%We also show the usefulness of the symbolic polyhedron by giving an alternate proof of the ELS containment for monomial ideals.
%\end{comment}

\begin{prop} \label{prop:approaches}
Let $\symbpoly$ be the symbolic polyhedron of a monomial ideal $I$.
For any $\abar \in \symbpoly \cap \mathbb{Q}^{n+1}$ there exists a positive integer $b$ such that $\xbar^{m\abar} \in I^{(m)}$ whenever $m$ is divisible by $b$.

\begin{proof}
Fix a maximal associated prime $P$ of $I$ and let $Q_{\subseteq P} = R \cap I R_p$. Applying Lemma \ref{lemma:cone} to the ideal $Q_{\subseteq P}$ we can express $\conv(\lattice{Q_{\subseteq P}})$ as 
\begin{align*} 
	\conv(\lattice{Q_{\subseteq P}}) 
	&= \conv(\lattice{\gens(Q_{\subseteq P})}) + \mathbb R_{+}^{n+1}\\
	&= \conv(\lattice{\gens(Q_{\subseteq P})}) + \cone\{ \ebar_i \mid x_i \in P\} + \cone\{ \ebar_i \mid x_i \notin P\}.
\end{align*}
Let $h$ be the height of $P$. 
Let $V \subseteq \mathbb R^{n+1}$ be the $h$-dimensional coordinate subspace $V = \spn \{\ebar_i \mid x_i \in P\}$, where $\ebar_i$ are standard basis vectors.

Since $\abar \in \symbpoly \subseteq \conv(\lattice{Q_{\subseteq P}})$,  we
have $\abar = \bbar + \rbar$ where $\bbar \in \conv(\lattice{\gens(Q_{\subseteq P})}) + \cone\{ \ebar_i \mid x_i \in P\}$ 
and $\rbar \in \cone\{ \ebar_i \mid x_i \notin P\}$.
We may apply our version of Carath\'eodory's Theorem to $\bbar \in \conv(\lattice{\gens(Q_{\subseteq P})})  + \cone\{\ebar_i \mid x_i \in P\} \subset V$ to obtain
\[
	\bbar = 
	\sum_{i=1}^h \lambda_{P, i} \vbar_{P,i} 
	+ \sum_{\substack{j=0\\x_j \in P}}^n c_{P,j} \ebar_j 
\]
where the first summation is a convex combination of vertices 
$\conv(\lattice{\gens(Q_{\subseteq P})})$ and the second summation is conical combination of the basis vectors $\ebar_j$ with $x_j \in P$.
Consequently, we may write $\abar$ as
\[ 
	\abar 
	= \bbar + \rbar 
	= \sum_{i=1}^h \lambda_{P, i} \vbar_{P,i} + \sum_{j=0}^n c_{P,j} \ebar_j 
\]
where each $c_{P, j} \geq 0$.

Pick $b_P \in \mathbb N$ such that $b_P \lambda_{P,i}$ and $b_P c_{P,j}$ are integers for all $i, j$.  Let $b = \lcm_{P \in \maxass(I)} b_P$ and let $m$ be a positive integer divisible by $b$. As $\vbar_{P,i}$ is a vertex of $\conv(\lattice{\gens(Q_{\subseteq P})})$, $\xbar^{\vbar_{P,i}} \in \gens(Q_{\subseteq P})$. Since each $m \lambda_{P,i}$ is a positive integer and $\sum_{i=1}^h m\lambda_{P,i} = m$, we have $m \abar \geq \sum_{i=1}^h m \lambda_{P, i} \vbar_{P, i}$ and hence $\xbar^{m\abar} \in Q_{\subseteq P}^m$. Therefore $\xbar^{m\abar} \in \bigcap_{P \in \maxass(I)} Q_{\subseteq P}^m = I^{(m)}$ by Theorem \ref{thm:symbpower}.
\end{proof}
\end{prop}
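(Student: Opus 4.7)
The plan is to exploit Theorem~\ref{thm:symbpower}, which reduces the problem to showing, for each $P \in \maxass(I)$ separately, that $\xbar^{m\abar} \in Q_{\subseteq P}^m$ for all $m$ divisible by some integer $b_P$; the required $b$ will then be $\lcm_{P \in \maxass(I)} b_P$. So I fix a maximal associated prime $P$ of height $h$ and let $V = \spn\{\ebar_i : x_i \in P\}$. Since the minimal generators of $Q_{\subseteq P}$ involve only the variables in $P$, Lemma~\ref{lemma:cone} gives
\[
\conv(\lattice{Q_{\subseteq P}}) = \conv(\lattice{\gens(Q_{\subseteq P})}) + \cone\{\ebar_i : x_i \in P\} + \cone\{\ebar_j : x_j \notin P\},
\]
with the first two summands lying in $V$. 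Because $\abar \in \symbpoly \subseteq \conv(\lattice{Q_{\subseteq P}})$, I can decompose $\abar = \bbar + \rbar$ by projecting onto $V$ along the complementary axes; both $\bbar$ and $\rbar$ are rational, $\bbar$ lies in the $V$-part of the polyhedron, and $\rbar$ lies in $\cone\{\ebar_j : x_j \notin P\}$.

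Next I apply the paper's version of Carath\'eodory's Theorem inside $V \cong \mathbb R^h$ to the $h$-dimensional polyhedron $\conv(\lattice{\gens(Q_{\subseteq P})}) + \cone\{\ebar_i : x_i \in P\}$, whose recession cone $\cone\{\ebar_i : x_i \in P\}$ is nonzero. This produces an expression
\[
\bbar = \sum_{i=1}^{h} \lambda_{P,i}\, \vbar_{P,i} + \sum_{j:\, x_j \in P} c_{P,j}\, \ebar_j,
\]
where the $\vbar_{P,i}$ are vertices of $\conv(\lattice{\gens(Q_{\subseteq P})})$ (so $\xbar^{\vbar_{P,i}} \in \gens(Q_{\subseteq P})$), $\lambda_{P,i} \geq 0$ with $\sum_i \lambda_{P,i} = 1$, and $c_{P,j} \geq 0$. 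Combining with $\rbar$ absorbs its nonnegative coordinates into an extended sum $\sum_{j=0}^n c_{P,j}\ebar_j$ over all $j$.

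Once the minimal face of the polyhedron containing $\bbar$ is fixed, the coefficients $\lambda_{P,i}$ and $c_{P,j}$ solve a linear system whose data (vertices, rays, and $\bbar$ itself) are rational, so these coefficients may be chosen rational. Let $b_P$ be a positive integer clearing all their denominators, and set $b = \lcm_{P} b_P$. For any $m$ divisible by $b$, each $m\lambda_{P,i}$ and $mc_{P,j}$ is a nonnegative integer, and $\sum_i m\lambda_{P,i} = m$. Hence $m\abar - \sum_{i=1}^h (m\lambda_{P,i}) \vbar_{P,i} \in \mathbb N^{n+1}$, which means $\xbar^{m\abar}$ is divisible by $\prod_{i=1}^h (\xbar^{\vbar_{P,i}})^{m \lambda_{P,i}}$, a product of exactly $m$ elements of $\gens(Q_{\subseteq P})$. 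Thus $\xbar^{m\abar} \in Q_{\subseteq P}^m$ for every $P \in \maxass(I)$, and Theorem~\ref{thm:symbpower} yields $\xbar^{m\abar} \in I^{(m)}$.

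The main technical obstacle I expect is the rationality of the Carath\'eodory coefficients: the general statement only guarantees their existence as real numbers, so the argument must specifically use that $\conv(\lattice{\gens(Q_{\subseteq P})}) + \cone\{\ebar_i : x_i \in P\}$ is a rational polyhedron and that the combination for a rational point on a rational face is forced to be rational. The rest of the proof is then a clean bookkeeping exercise in clearing denominators and taking a least common multiple over the finite set $\maxass(I)$.
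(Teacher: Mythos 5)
Your proposal follows exactly the same route as the paper: fix a maximal associated prime $P$, use Lemma~\ref{lemma:cone} to split $\conv(\lattice{Q_{\subseteq P}})$ along the coordinate subspace $V$, apply the paper's Carath\'eodory Theorem in $V$, clear denominators to get $b_P$, take $b=\lcm_P b_P$, and conclude with Theorem~\ref{thm:symbpower}. The one place you go further than the paper is in justifying why the Carath\'eodory coefficients $\lambda_{P,i}$ and $c_{P,j}$ may be taken rational: the paper simply picks $b_P$ clearing their denominators without comment, while you correctly observe that this requires knowing the polyhedron is rational and $\bbar$ is rational, and that one should solve for the coefficients on the minimal face; this is a genuine (if small) gap in the paper's exposition that your version fills.
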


For any polyhedron $\mathcal P \subseteq \mathbb R_{+}^{n+1}$, we define $\alpha(P) = \min \{ \abar \cdot \onebar \mid \abar \in \mathcal P \}$ where $\onebar = (1,\ldots, 1)$. Similarly, if $I$ is a homogeneous ideal, recall that $\alpha(I)$ is the least degree $d$ with $I_d \neq 0$.

\begin{cor} \label{cor:alphaq} 
If $\symbpoly$ is the symbolic polyhedron of a monomial ideal $I$, then for all $m \in \mathbb N$, $\alpha(I^{(m)}) \geq m\alpha(\symbpoly)$ and equality holds for infinitely many $m$.
\begin{proof}
If $\xbar^\bbar$ is a monomial in $I^{(m)}$ then $\bbar \in m\symbpoly$ by Theorem \ref{thm:symbcontainment}. Thus, $\alpha(I^{(m)}) \geq \alpha(m\symbpoly) = m \alpha(\symbpoly)$ for all $m$. If $\abar \in \symbpoly$ is a point with $\abar \cdot \onebar = \alpha(\symbpoly)$ then, by Lemma \ref{prop:approaches}, $\alpha(I^{(m)}) \leq m \abar \cdot \onebar = m \alpha(\symbpoly)$ for infinitely many $m$.
\end{proof}
\end{cor}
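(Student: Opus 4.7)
The plan is to prove the two assertions separately, each as a direct consequence of results already established. The lower bound follows from the containment theorem; the equality part requires the pointwise construction of Proposition \ref{prop:approaches} applied to a rational minimizer of the linear functional $\abar \mapsto \abar \cdot \onebar$ on $\symbpoly$.

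For the lower bound, let $\xbar^\bbar$ be any monomial in $I^{(m)}$; since $I^{(m)}$ is a monomial ideal, $\alpha(I^{(m)})$ is attained by such a monomial. By Theorem \ref{thm:symbcontainment}, $\bbar \in m\symbpoly$, and $\deg(\xbar^\bbar) = \bbar \cdot \onebar \geq \min_{\cbar \in m\symbpoly} \cbar \cdot \onebar = m\alpha(\symbpoly)$. Taking the minimum over all monomial generators of $I^{(m)}$ gives $\alpha(I^{(m)}) \geq m\alpha(\symbpoly)$.

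For the equality part, I would first note that $\symbpoly$ is a rational polyhedron: each $\conv(\lattice{Q_{\subseteq \p}})$ is defined by integer vertices and the integer rays $\ebar_0, \ldots, \ebar_n$ (by Lemma \ref{lemma:cone}), so their intersection is cut out by rational halfspaces. The recession cone of each $\conv(\lattice{Q_{\subseteq \p}})$ is $\mathbb R_+^{n+1}$, hence the same holds for $\symbpoly$, and in particular $\symbpoly \subseteq \mathbb R_+^{n+1}$. Consequently the linear functional $\abar \mapsto \abar \cdot \onebar$ is bounded below on $\symbpoly$ and attains its minimum at a vertex, which is rational since $\symbpoly$ is a rational polyhedron. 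Pick $\abar \in \symbpoly \cap \mathbb Q^{n+1}$ with $\abar \cdot \onebar = \alpha(\symbpoly)$. By Proposition \ref{prop:approaches}, there is an integer $b \geq 1$ such that $\xbar^{m\abar} \in I^{(m)}$ whenever $b \mid m$, so $\alpha(I^{(m)}) \leq m\abar \cdot \onebar = m\alpha(\symbpoly)$ for all such $m$. Combined with the inequality above, equality holds for the infinitely many $m$ divisible by $b$.

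The only substantive point is the rationality of the minimizer, which boils down to standard facts about linear programming over rational polyhedra with recession cone $\mathbb R_+^{n+1}$; once this is noted, the proof is essentially a one-liner for each direction, and I do not anticipate a real obstacle.
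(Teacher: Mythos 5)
Your proof is correct and takes essentially the same route as the paper: the lower bound via Theorem \ref{thm:symbcontainment}, and the upper bound for infinitely many $m$ via Proposition \ref{prop:approaches} applied to a minimizer of $\abar \mapsto \abar \cdot \onebar$ on $\symbpoly$. The one genuine addition is your explicit check that the minimizer can be taken in $\mathbb Q^{n+1}$ --- Proposition \ref{prop:approaches} is stated only for rational points, and the paper applies it without comment, so your observation that $\symbpoly$ is a rational, pointed (since $\symbpoly \subseteq \mathbb R_+^{n+1}$) polyhedron, hence its linear-programming minimum is attained at a rational vertex, closes a small gap the paper leaves implicit.
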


\begin{cor} \label{cor:wald}
If we suppose $\symbpoly$ is the symbolic polyhedron of a monomial ideal $I$ then $\alpha(\symbpoly) = \gamma(I)$.
\begin{proof}
As $\alpha(\symbpoly) = \alpha(I^{(m)})/m$ for infinitely many $m$, we know that $\alpha(\symbpoly)$ must equal the limit $\gamma(I) = \lim_{m \to \infty} \alpha(I^{(m)})/m$.
\end{proof}
\end{cor}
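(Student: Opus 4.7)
The plan is to derive this identity directly from Corollary \ref{cor:alphaq}, which already packages both halves of the equality. Recall that the Waldschmidt constant is $\gamma(I) = \lim_{m \to \infty} \alpha(I^{(m)})/m$, and this limit is known to exist (for example, by Fekete's lemma applied to the subadditive sequence $\alpha(I^{(m)})$, using that the product of a form in $I^{(m)}$ and a form in $I^{(k)}$ lies in $I^{(m+k)}$).

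For the lower bound, Corollary \ref{cor:alphaq} gives $\alpha(I^{(m)}) \geq m\alpha(\symbpoly)$ for every $m \geq 1$; dividing by $m$ and passing to the limit yields $\gamma(I) \geq \alpha(\symbpoly)$. For the reverse inequality, the same corollary asserts that the equality $\alpha(I^{(m)}) = m\alpha(\symbpoly)$ is attained for infinitely many $m$. Along such a subsequence, $\alpha(I^{(m)})/m$ is constantly equal to $\alpha(\symbpoly)$, and since the full sequence converges, its subsequential limits must agree with $\gamma(I)$. Hence $\gamma(I) = \alpha(\symbpoly)$.

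There is essentially no obstacle at this stage: all of the substantive content has already been carried out in Proposition \ref{prop:approaches}, which realizes each rational point of $\symbpoly$ as $\frac{1}{m}$ times the exponent vector of a monomial in $I^{(m)}$ for suitably divisible $m$, and in its packaging into Corollary \ref{cor:alphaq}. The final step is simply the elementary observation that a convergent sequence coincides with any of its subsequential limits.
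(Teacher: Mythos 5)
Your proof is correct and follows the same approach as the paper: both arguments hinge entirely on Corollary \ref{cor:alphaq} (that $\alpha(I^{(m)}) \geq m\alpha(\symbpoly)$ always and equality holds infinitely often) together with the known existence of the limit defining $\gamma(I)$. You spell out both directions of the inequality and the subsequence argument slightly more explicitly than the paper does, but the substance is identical.
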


Let $\stairs I = \lattice I + \mathbb R^{n+1}_{+} = \{ \abar \in \mathbb R_{+}^{n+1} \mid \exists\,\xbar^\bbar \in \gens(I),\, \abar \geq \bbar \}$ be the staircase of the monomial $I \subseteq R$. For example, 
\[ \stairs{x^2, y^3} = \{ (a,b) \mid a\geq 2,\, b \geq 0 \} \cup \{ (a,b) \mid a \geq 0,\, b \geq 3\}.
\]

\begin{prop} \label{prop:erQ} 
If $\symbpoly$ is the symbolic polyhedron of a monomial ideal $I$ with big-height $e$, then $er\symbpoly \subseteq \stairs{I^r}$ for all $r$.
\begin{proof}
Let $P \in \maxass(I)$ be a maximal associated prime of $I$. Let $h$ be the height of $P$.
We first claim that $e \conv(\lattice{Q_{\subseteq P}}) \subseteq \stairs{Q_{\subseteq P}}$.  
Take $\abar \in \conv(\lattice{Q_{\subseteq P}})$ and express it as
\[ \abar = \sum_{i=1}^h \lambda_{P, i} \vbar_{P,i} + \sum_{j=0}^n c_{P,j} \ebar_j
\]
as in the proof of Lemma \ref{prop:approaches}. As $\sum_{i=1}^h \lambda_{P, i} = 1$ and each $\lambda_{P, i} \geq 0$, there exists an index $i_0$ with $\lambda_{P, i_0} \geq 1/h \geq 1/e$. Thus, $e\abar \geq e \lambda_{i_0} \vbar_{P,i_0} \geq \vbar_{P,i_0}$ where $\xbar^{\vbar_{P,i_0}} \in Q_{\subseteq P}$. Consequently, $e\abar \in \stairs{Q_{\subseteq P}}$. 

Thus, if we take a point $\abar \in \symbpoly$, then $\abar \in \conv(\lattice{Q_{\subseteq P}})$ for all $P \in \maxass(I)$ and hence $e\abar \in \bigcap_{P \in \maxass(I)} \stairs{Q_{\subseteq P}} = \stairs I$. That is, $e\symbpoly \subseteq \stairs I$.  If we take the $r$-fold Minkowski sum of both sides of the containment $e\symbpoly \subseteq \stairs I$ we get $er\symbpoly \subseteq r\stairs I $.  The results follows from the observation that $r \stairs I = \stairs{I^r}$.
\end{proof}
\end{prop}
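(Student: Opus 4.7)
The plan is to reduce the containment $er\symbpoly \subseteq \stairs{I^r}$ to a pointwise statement about each intersectand of the symbolic polyhedron, then combine via intersection and Minkowski sums. The core technical step will be a scaling lemma: for each $P \in \maxass(I)$ with $h = \height P$,
\[ e \cdot \conv(\lattice{Q_{\subseteq P}}) \subseteq \stairs{Q_{\subseteq P}}. \]
To prove this, fix $\abar$ in the left-hand side. By Lemma \ref{lemma:cone}, the convex hull decomposes as $\conv(\lattice{\gens(Q_{\subseteq P})}) + \mathbb R^{n+1}_+$, and I would further split the recession cone along the coordinate subspace $V = \spn\{\ebar_i : x_i \in P\}$ versus its complement. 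Since every generator of $Q_{\subseteq P}$ is a monomial in variables of $P$, all vertices of $\conv(\lattice{\gens(Q_{\subseteq P})})$ lie in the $h$-dimensional subspace $V$. Applying the version of Carath\'eodory's theorem just established (to the portion of $\abar$ living in the $h$-dimensional polyhedron inside $V$) produces a decomposition
\[ \abar = \sum_{i=1}^{h} \lambda_i \vbar_i + \sum_{j=0}^{n} c_j \ebar_j, \qquad \lambda_i \geq 0,\ \sum_i \lambda_i = 1,\ c_j \geq 0, \]
where each $\vbar_i$ is the exponent vector of a generator of $Q_{\subseteq P}$. Since at most $h$ nonnegative numbers sum to $1$, some $\lambda_{i_0} \geq 1/h \geq 1/e$, and thus $e\abar \geq e\lambda_{i_0}\vbar_{i_0} \geq \vbar_{i_0}$, placing $e\abar$ in $\stairs{Q_{\subseteq P}}$.

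Once the scaling lemma is in hand, Step 2 is to intersect over $P$. Since staircases of monomial ideals satisfy $\stairs{J \cap J'} = \stairs{J} \cap \stairs{J'}$, and since $I = \bigcap_{P \in \maxass(I)} Q_{\subseteq P}$ (because any primary component in the decomposition of $I$ appears as one of the factors in some $Q_{\subseteq P}$), I obtain
\[ e\symbpoly \;=\; \bigcap_{P \in \maxass(I)} e\conv(\lattice{Q_{\subseteq P}}) \;\subseteq\; \bigcap_{P \in \maxass(I)} \stairs{Q_{\subseteq P}} \;=\; \stairs{I}. \]

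Step 3 is to pass from $r=1$ to arbitrary $r$ by taking the $r$-fold Minkowski sum: $er\symbpoly = r(e\symbpoly) \subseteq r\stairs{I}$. The final identity I need is $r\stairs{I} = \stairs{I^r}$: the forward containment uses that a sum of $r$ elements dominating $r$ generators of $I$ dominates their product, which is a generator of $I^r$; the reverse uses that any generator of $I^r$ is literally a product of $r$ generators of $I$, so any point above it can be written as that first generator plus $(r-1)$ of the other generators, redistributing the excess onto a single summand.

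The main obstacle is Step 1, since the factor $1/e$ has to come from a genuine geometric source. The key realization is that even though the symbolic polyhedron lives in $\mathbb R^{n+1}$, the relevant vertices for a given maximal associated prime $P$ lie in the $h$-dimensional coordinate subspace indexed by $P$, and it is this $h$ (bounded by the big-height $e$) that feeds Carath\'eodory's theorem and forces one convex coefficient to be at least $1/e$. The remaining pieces — the intersection behaviour of staircases and the Minkowski-sum identity for powers — are routine but need to be checked carefully to ensure nothing is lost when $I$ has embedded primes.
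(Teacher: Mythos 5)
Your proposal is correct and follows essentially the same route as the paper: you prove the scaling lemma $e\conv(\lattice{Q_{\subseteq P}}) \subseteq \stairs{Q_{\subseteq P}}$ via the same Carath\'eodory decomposition (using that the vertices live in the $h$-dimensional coordinate subspace, so some $\lambda_{i_0} \geq 1/h \geq 1/e$), intersect over $\maxass(I)$ using $\stairs{J \cap J'} = \stairs J \cap \stairs{J'}$ and $I = \bigcap_P Q_{\subseteq P}$, and finish with the $r$-fold Minkowski sum and the identity $r\stairs I = \stairs{I^r}$. You flesh out a couple of steps the paper leaves implicit (why $\bigcap_P \stairs{Q_{\subseteq P}} = \stairs I$ and why $r\stairs I = \stairs{I^r}$), but the argument is the same.
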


The previous proposition and Theorem \ref{thm:symbcontainment} together prove the containment $I^{(nr)} \subseteq I^r$ of Ein-Lazarsfeld-Smith \cite{refELS} and Hochster-Huneke \cite{refHoHu} for monomial ideals. Also, $er\symbpoly \subseteq \stairs{I^r}$ implies $\alpha(\symbpoly) \geq \alpha(I)/e$, which in turn implies the Waldschmidt \cite{refW} and Skoda \cite{refSk} bound on $\alpha(I^{(m)})/m$.

The next theorem uses a simple degree argument to show that $\alpha(\symbpoly)$ controls which powers $s$ of the maximal ideal satisfy the containment $I^{(m)} \subseteq \maxideal^s I^r$ for $m$ sufficiently large.

For a homogeneous ideal $I$, let $\beta(I)$ be largest degree of a minimal generator of $I$.

\begin{thm} \label{thm:alphaslope}
Suppose $\symbpoly$ is the symbolic polyhedron of a monomial ideal $I$ with big-height $e$. For all integers $r \geq 0$ and $m \geq \max(er, \beta(I^r)/\alpha(\symbpoly))$,
\[ I^{(m)} \subseteq \maxideal^{\lceil \alpha(\symbpoly)m \rceil - \beta(I^r) } I^r.
\]

\begin{proof}
Take an arbitrary monomial $\xbar^\abar \in I^{(m)}$. Since $\lattice{I^{(m)}} \subseteq m \symbpoly \subseteq er \symbpoly$, we have $\abar \in I^r$ by Proposition \ref{prop:erQ}. Let $\xbar^\bbar$ be a minimal generator of $I^r$ that divides $\xbar^\abar$.  By Corollary \ref{cor:alphaq}, $\deg \xbar^\abar \geq \alpha(I^{(m)}) \geq m \alpha(\symbpoly)$. As $\deg \xbar^\bbar \leq \beta(I^r)$, we have $\deg \xbar^{\abar - \bbar} \geq m \alpha(\symbpoly) - \beta(I^r)$ and hence $\xbar^\abar \in \maxideal^{\lceil \alpha(\symbpoly) m \rceil - \beta(I^r)} I^r$. The desired containment follows.
\end{proof}
\end{thm}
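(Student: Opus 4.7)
The plan is to pick an arbitrary monomial $\xbar^\abar \in I^{(m)}$ and achieve two things: first, locate a minimal generator $\xbar^\bbar$ of $I^r$ dividing $\xbar^\abar$, and second, show that the cofactor $\xbar^{\abar-\bbar}$ carries enough degree to sit in the required power of $\maxideal$.

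For the first task, Theorem \ref{thm:symbcontainment} gives $\abar \in m\symbpoly$, and the aim is to funnel this into $\stairs{I^r}$ using Proposition \ref{prop:erQ}, which says $er\symbpoly \subseteq \stairs{I^r}$. Thus it suffices to check that $m\symbpoly \subseteq er\symbpoly$ whenever $m \geq er$. This should follow from the structure of $\symbpoly$: by Lemma \ref{lemma:cone}, each $\conv(\lattice{Q_{\subseteq \p}})$ has recession cone $\mathbb R_+^{n+1}$ and lies in $\mathbb R_+^{n+1}$, and these properties are preserved by intersection, so $\symbpoly$ itself has the same features. Given any $\bbar \in \symbpoly$, the point $\tfrac{m}{er}\bbar = \bbar + \tfrac{m-er}{er}\bbar$ is obtained by adding a vector in $\mathbb R_+^{n+1}$ to $\bbar$, so it remains in $\symbpoly$, which gives $m\bbar \in er\symbpoly$. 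Hence $\abar \in \stairs{I^r}$ and $\xbar^\abar$ is divisible by some minimal generator $\xbar^\bbar$ of $I^r$.

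The second task is a pure degree accounting. Corollary \ref{cor:alphaq} yields $\deg \xbar^\abar \geq \alpha(I^{(m)}) \geq \alpha(\symbpoly)\, m$, and by definition $\deg \xbar^\bbar \leq \beta(I^r)$, so $\deg \xbar^{\abar-\bbar} \geq \alpha(\symbpoly)\,m - \beta(I^r)$. Because $\deg \xbar^{\abar-\bbar}$ is an integer, this lower bound sharpens to $\lceil \alpha(\symbpoly)\,m \rceil - \beta(I^r)$, a quantity which is nonnegative precisely by the second half of the hypothesis $m \geq \beta(I^r)/\alpha(\symbpoly)$. Writing $\xbar^\abar = \xbar^\bbar \cdot \xbar^{\abar-\bbar}$ then places $\xbar^\abar$ in $\maxideal^{\lceil \alpha(\symbpoly)\,m\rceil - \beta(I^r)} I^r$, and since $I^{(m)}$ is a monomial ideal the containment upgrades from monomials to ideals.

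The main obstacle is the rescaling step $m\symbpoly \subseteq er\symbpoly$ for $m \geq er$; it looks deceptively trivial, but it is exactly the place where the hypothesis $m \geq er$ (rather than $m = er$) is exploited, and it relies essentially on the recession cone of $\symbpoly$ being the full positive orthant. Everything else is bookkeeping of degrees combined with results already established in the paper.
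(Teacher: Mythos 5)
Your proposal is correct and follows essentially the same line as the paper's proof: land $\abar$ in $\stairs{I^r}$ via $m\symbpoly \subseteq er\symbpoly \subseteq \stairs{I^r}$, extract a minimal generator $\xbar^\bbar$ of $I^r$ dividing $\xbar^\abar$, and then do the degree count with Corollary~\ref{cor:alphaq}. The one place you go beyond the paper is in explicitly justifying the inclusion $m\symbpoly \subseteq er\symbpoly$ for $m \geq er$ via the recession cone of $\symbpoly$ being $\mathbb R_+^{n+1}$ — a step the paper asserts without comment — and your argument for it is correct.
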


%\begin{comment}
%Depending on how pedantic we want to be, we can also show that for any $a > \alpha(\symbpoly)$, $b \in \mathbb Z$ and $m \gg 0$, $I^{(m)} \not\subseteq \maxideal^{am + b} I^r$.
%\end{comment}

Chudnovsky \cite{refCh} conjectured that if $I \subseteq \field[\mathbb P^n]$ is a radical ideal of points then 
\[ \alpha(I^{(m)})/m \geq (\alpha(I) + n - 1)/n \quad\text{for all $m$}.
\]
Since $\gamma(I) = \lim_{m\to \infty} \alpha(I^{(m)})/m$, we see that  
\[  \gamma(I) \geq (\alpha(I) + n - 1)/n \quad\text{for all $m$}
\]
follows from Chudnovsky's conjecture. 
As $\alpha(I^{(m)})/m \geq \gamma(I)$, for each $m$, they are, in fact, equivalent.

Our computational evidence suggests a similar conjecture for monomial ideals and their symbolic polyhedra, wherein $e$ replaces $n$:

\begin{conj} \label{conj:alpha}
If $I$ is a monomial ideal with big-height $e$ and $\symbpoly$ is the symbolic polyhedron of $I$, then $\alpha(\symbpoly) \geq (\alpha(I) + e - 1)/e$.
\end{conj}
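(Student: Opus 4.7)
The plan is to reduce the conjecture to a Chudnovsky-style lower bound on the initial degrees of symbolic powers of $I$ and then pass to the limit. By Corollary \ref{cor:wald}, $\alpha(\symbpoly) = \gamma(I) = \lim_{m\to\infty}\alpha(I^{(m)})/m$, so it suffices to exhibit an infinite set of values of $m$ for which $\alpha(I^{(m)})/m \geq (\alpha(I)+e-1)/e$.

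For squarefree $I$, this follows directly from the results of Section \ref{sqgen}. Applying Corollary \ref{conj3m} with $m=1$ gives $I^{(te)} \subseteq \maxideal^{t(e-1)} I^{t}$ for all $t \geq 1$. Every monomial on the right has degree at least $t(e-1)+t\alpha(I)$, so $\alpha(I^{(te)})/(te) \geq (\alpha(I)+e-1)/e$, and letting $t \to \infty$ yields $\alpha(\symbpoly) \geq (\alpha(I)+e-1)/e$.

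For an arbitrary monomial ideal the example following Corollary \ref{conj4m} shows that the containment $I^{(te)} \subseteq \maxideal^{t(e-1)}I^t$ can fail, so this direct containment route is not available. Two natural approaches suggest themselves. \emph{Polarization}: if $\tilde I$ denotes the standard squarefree polarization of $I$ in some larger polynomial ring, then $\tilde I$ is squarefree with $\alpha(\tilde I)=\alpha(I)$, and one might try to compare $\symbpoly$ for $I$ with that of $\tilde I$ while controlling the big-height of $\tilde I$ in terms of $e$; however, symbolic powers and polarization are known to interact subtly. \emph{Direct polyhedral argument}: choose $\abar \in \symbpoly$ minimizing $\sum_j a_j$ and a maximal associated prime $P$ with $\height P = e$; apply Proposition \ref{prop:approaches} to write $\abar = \sum_{i=1}^{e} \lambda_{P,i}\vbar_{P,i} + \sum_{j} c_{P,j}\ebar_j$, where each $\xbar^{\vbar_{P,i}} \in \gens(Q_{\subseteq P})$ is the restriction to variables in $P$ of a generator of $I$. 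One then tracks the degree lost in passing from generators of $I$ to generators of $Q_{\subseteq P}$ and attempts to extract an extra $(e-1)/e$ over the trivial bound $\alpha(\symbpoly) \geq \alpha(I)/e$ that follows from Proposition \ref{prop:erQ}. Controlling this gap is the principal obstacle and is precisely the content of the conjecture in the nonsquarefree case.
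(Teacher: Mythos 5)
This statement is labeled a \emph{conjecture} in the paper, and the paper offers no proof of it in general; your instinct to treat it as open is the right one. What the paper does supply are two partial results: Proposition~\ref{prop:ifchudholds}, which verifies the conjecture whenever $\alpha(\symbpoly) = \alpha(I)$, and Proposition~\ref{prop:intclosed}, which gives the weaker bound with $n$ in place of $e$ under an integral-closedness hypothesis on the $Q_{\subseteq\p}$ together with a lower bound on $\alpha(I)$. Neither covers the general statement, and the authors explicitly present the bound as motivated by computational evidence.

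Your argument in the square-free case is correct and essentially is what the paper's machinery gives. From Corollary~\ref{conj3m} with $m=1$, $I^{(te)} \subseteq \maxideal^{t(e-1)} I^t$, so $\alpha(I^{(te)}) \geq t(e-1) + t\alpha(I)$; since the limit $\gamma(I) = \lim_m \alpha(I^{(m)})/m$ exists, the subsequence at $m = te$ forces $\gamma(I) \geq (\alpha(I)+e-1)/e$, and Corollary~\ref{cor:wald} translates this into $\alpha(\symbpoly) \geq (\alpha(I)+e-1)/e$. (One could argue even more directly via Corollary~\ref{cor:alphaq}, which gives $\alpha(I^{(m)}) = m\alpha(\symbpoly)$ for some $m$; but the limit argument is fine.) This is in fact slightly stronger than the paper's explicit claim in Section~\ref{sqgen}, which records Chudnovsky with $n$ rather than $e$ --- your deduction from Theorem~\ref{thm.sqfree} correctly gets the sharper $e$-version.

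One small imprecision: the example following Corollary~\ref{conj4m} exhibits $I^{(3)} \not\subseteq \maxideal I^2$ for an ideal with $e=2$, which is a failure of the Conjecture~\ref{conj4m}-type containment (the $r=1$ case of Theorem~\ref{thm.sqfree}), not directly a failure of $I^{(te)} \subseteq \maxideal^{t(e-1)} I^t$ as you state. Your broader point --- that the square-free containment machinery does not transfer to non-square-free ideals --- is nevertheless correct and is the reason the authors leave the general statement as a conjecture. Both of your suggested avenues (polarization and a direct polyhedral degree count) are reasonable things to try, and your identification of the degree gap between generators of $I$ and generators of $Q_{\subseteq P}$ as the central obstacle is accurate; the paper does not resolve it either.
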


The substitution of $e$ with $n$ makes for a stronger conjecture since $e \leq n$ except in the one case where $e=n+1$ and consequently $I^m = I^{(m)}$ (cf. Remark \ref{rem:nplusone}).

%\begin{comment}
%If we phrase the previous conjecture in terms of $n$ instead of $e$, we may have to exclude the $e=n+1$ case.
%\end{comment}

\begin{prop} \label{prop:equigen}
Suppose $I$ is an equigenerated ideal with big-height $e$. If Conjecture \ref{conj:alpha} holds for $I$ then $I^{(er)} \subseteq \maxideal^{ (e-1) r} I^r$ for all $r$.
\begin{proof}
We will use Theorem \ref{thm:alphaslope} with $m=er$. First, $m = er \geq r (\alpha(I) + e - 1)/\alpha(Q)$ 
since Conjecture \ref{conj:alpha} holds for $I$. Furthermore, $r (\alpha(I) + e - 1)/\alpha(Q) \geq r \alpha(I)/\alpha(Q) = \beta(I^r)/\alpha(Q)$ as $I$ is equigenerated. Thus, the condition $m \geq \max(er, \beta(I^r)/\alpha(Q))$ of Theorem \ref{thm:alphaslope} is satisfied.

Since $\lceil \alpha(\symbpoly) m \rceil - \beta(I^r)  = \lceil \alpha(\symbpoly) er \rceil - \beta(I^r) \geq (\alpha(I) + e - 1)r - \alpha(I)r = (e - 1) r$, the desired containment follows from Theorem \ref{thm:alphaslope}.
\end{proof}
\end{prop}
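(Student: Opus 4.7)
The plan is to apply Theorem \ref{thm:alphaslope} directly with $m=er$, using two pieces of data: first, because $I$ is equigenerated, every minimal generator of $I$ has the same degree $\alpha(I)$, so every minimal generator of $I^r$ has degree exactly $r\alpha(I)$; that is, $\beta(I^r)=r\alpha(I)$. Second, Conjecture \ref{conj:alpha} supplies the lower bound $\alpha(\symbpoly)\geq (\alpha(I)+e-1)/e$ on the Waldschmidt-type invariant that drives Theorem \ref{thm:alphaslope}.

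First I would verify the hypothesis $m\geq \max(er,\beta(I^r)/\alpha(\symbpoly))$ needed to invoke Theorem \ref{thm:alphaslope}. The inequality $m\geq er$ is trivial from the choice $m=er$. For the second inequality I rewrite it as $\alpha(\symbpoly)\cdot er\geq \beta(I^r)=r\alpha(I)$, i.e.\ $\alpha(\symbpoly)\geq \alpha(I)/e$. Conjecture \ref{conj:alpha} gives the stronger bound $\alpha(\symbpoly)\geq (\alpha(I)+e-1)/e\geq \alpha(I)/e$, so the hypothesis is satisfied.

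Next I would estimate the exponent on $\maxideal$ produced by Theorem \ref{thm:alphaslope}. Using the same two inputs,
\[
\lceil \alpha(\symbpoly)\,m\rceil-\beta(I^r)
\;\geq\; \alpha(\symbpoly)\cdot er-r\alpha(I)
\;\geq\; r(\alpha(I)+e-1)-r\alpha(I)
\;=\; r(e-1).
\]
Theorem \ref{thm:alphaslope} then yields $I^{(er)}\subseteq\maxideal^{r(e-1)}I^r$, which is the desired containment.

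There is no real obstacle here: the hard work lives in Theorem \ref{thm:alphaslope} and in the hypothesis that Conjecture \ref{conj:alpha} holds. The only thing worth highlighting in the proposal is that the equigenerated hypothesis is used \emph{twice}: once to identify $\beta(I^r)$ with $r\alpha(I)$ when verifying the hypothesis of Theorem \ref{thm:alphaslope}, and once again to cancel $r\alpha(I)$ against $\beta(I^r)$ in the exponent computation. Without equigeneracy, $\beta(I^r)$ could strictly exceed $r\alpha(I)$, and the cancellation producing exactly $r(e-1)$ would fail; this is why the statement is restricted to equigenerated ideals rather than asserted for arbitrary monomial ideals.
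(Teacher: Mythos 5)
Your proof is correct and follows essentially the same route as the paper: both apply Theorem \ref{thm:alphaslope} with $m=er$, both use equigeneracy to identify $\beta(I^r)=r\alpha(I)$, and both invoke Conjecture \ref{conj:alpha} to verify the hypothesis of Theorem \ref{thm:alphaslope} and to bound the exponent of $\maxideal$ from below by $(e-1)r$. Your write-up is, if anything, a touch cleaner in flagging explicitly that the hypothesis check only needs the weaker bound $\alpha(\symbpoly)\geq\alpha(I)/e$ while the exponent estimate uses the full strength of the conjecture.
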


We now examine certain situations where the Conjecture \ref{conj:alpha} holds.
 
\begin{prop} \label{prop:ifchudholds}
Suppose $I$ is a monomial ideal and $\symbpoly$ is its symbolic polyhedron. If $\alpha(\symbpoly) = \alpha(I)$ then Conjecture \ref{conj:alpha} holds for $I$. Furthermore, if $\beta(I) \leq e \alpha(I)$ then $I^{(er)}  \subseteq \maxideal^{ (e-1) r} I^r$ for all $r$.
\begin{proof}
First $\alpha(\symbpoly) \geq 1$ since each $Q_{\subseteq P}$ is generated in degree at least one and hence $\symbpoly$ is contained in the half-space $x_0 + \cdots + x_n \geq 1$.

As $\alpha(\symbpoly) \geq 1$, we have $(e-1) \alpha(\symbpoly) \geq e-1$ and hence $e \alpha(\symbpoly) \geq \alpha(\symbpoly) + e - 1$. So,
\[ 
	\alpha(\symbpoly) 
	\geq \frac{\alpha(\symbpoly) + e -1}{e} 
	= \frac{\alpha(I) + e - 1}{e}.
\]

In order to show that $I^{(er)}  \subseteq \maxideal^{ (e-1) r} I^r$, it will suffice to show that $er \geq \max(er, \beta(I^r)/\alpha(\symbpoly))$ and apply Theorem \ref{thm:alphaslope}. This holds for all $r$ since $\beta(I^r)/ \alpha(\symbpoly) \leq r \beta(I) / \alpha(\symbpoly) =  r \beta(I)/ \alpha(I) \leq  er$.
\end{proof}
\end{prop}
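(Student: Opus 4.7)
The plan is to prove the two claims separately: first establish Conjecture \ref{conj:alpha} under the equality hypothesis $\alpha(\symbpoly) = \alpha(I)$, and then combine that bound with Theorem \ref{thm:alphaslope} to obtain the stated containment.

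For the first claim, my plan is to note the trivial lower bound $\alpha(\symbpoly) \geq 1$. Each $Q_{\subseteq \p} = R \cap I R_\p$ is a proper monomial ideal, so every minimal generator of $Q_{\subseteq \p}$ has degree at least one; consequently $\conv(\lattice{Q_{\subseteq \p}})$ sits inside the half-space $\{\,\abar \in \mathbb R^{n+1} : \abar \cdot \onebar \geq 1\,\}$, and therefore so does the intersection $\symbpoly$. From $\alpha(\symbpoly) \geq 1$ one gets $(e-1)\alpha(\symbpoly) \geq e - 1$, which rearranges to $\alpha(\symbpoly) \geq (\alpha(\symbpoly) + e - 1)/e$; substituting $\alpha(\symbpoly) = \alpha(I)$ yields exactly Conjecture \ref{conj:alpha}.

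For the second claim, my plan is to feed this inequality into Theorem \ref{thm:alphaslope} with $m = er$. To verify the size hypothesis $m \geq \max(er, \beta(I^r)/\alpha(\symbpoly))$ I would use the submultiplicativity estimate $\beta(I^r) \leq r\beta(I)$ together with the assumption $\beta(I) \leq e\alpha(I) = e\alpha(\symbpoly)$, which gives $\beta(I^r)/\alpha(\symbpoly) \leq re$. Theorem \ref{thm:alphaslope} then produces $I^{(er)} \subseteq \maxideal^{\lceil \alpha(\symbpoly) er \rceil - \beta(I^r)} I^r$, and I would control the exponent by invoking the bound $\alpha(\symbpoly) \geq (\alpha(I) + e - 1)/e$ established in part one, which implies $\lceil \alpha(\symbpoly) er \rceil \geq (\alpha(I) + e - 1)\,r$.

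The subtle step will be reconciling the resulting exponent $(\alpha(I) + e - 1)r - \beta(I^r)$ with the target $(e - 1)r$: on its face this demands $\beta(I^r) \leq r\alpha(I)$, which is considerably tighter than the crude bound $\beta(I^r) \leq re\alpha(I)$ used to verify the size hypothesis. I expect to close this gap by exploiting $\alpha(\symbpoly) = \alpha(I)$ more forcefully than in part one, using the fact that $\alpha(\symbpoly)\,er = \alpha(I)\,er$ is already an integer (so no ceiling is needed) and that any minimal generator of $I^r$ realising degree greater than $r\alpha(I)$ would, via the symbolic polyhedron, be incompatible with the equality $\alpha(\symbpoly) = \alpha(I)$. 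This compatibility analysis is where I anticipate the main effort.
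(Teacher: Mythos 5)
Your treatment of the first claim is correct and matches the paper word for word: $\alpha(\symbpoly) \geq 1$ because each $Q_{\subseteq \p}$ is a proper monomial ideal, rearranging $(e-1)\alpha(\symbpoly) \geq e-1$ and substituting $\alpha(\symbpoly)=\alpha(I)$ gives the Chudnovsky-type bound. For the second claim your setup also matches the paper: apply Theorem~\ref{thm:alphaslope} at $m=er$, verifying the size hypothesis via $\beta(I^r) \leq r\beta(I) \leq re\alpha(I) = re\alpha(\symbpoly)$.

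The issue is in what you call the subtle step. You correctly flag that the exponent delivered by Theorem~\ref{thm:alphaslope}, namely $\lceil \alpha(\symbpoly)er\rceil - \beta(I^r) = \alpha(I)er - \beta(I^r)$, is not obviously at least $(e-1)r$. But your proposed repair is false: you cannot derive $\beta(I^r) \leq r\alpha(I)$ from the equality $\alpha(\symbpoly)=\alpha(I)$. Since $\beta(I^r) \geq \alpha(I^r) = r\alpha(I)$ always, your claim would force $I^r$ to be equigenerated, which is not a consequence of the hypotheses. Concretely, take $I = (x_0x_1, x_0x_2, x_3) = (x_0,x_3)\cap(x_1,x_2,x_3) \subseteq \field[x_0,\ldots,x_3]$. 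Here $e=3$, $\alpha(I)=1$, $\beta(I)=2 \leq 3 = e\alpha(I)$, and $\symbpoly$ is cut out by $a_0+a_3\geq 1$, $a_1+a_2+a_3\geq 1$, so $\alpha(\symbpoly)=1=\alpha(I)$; nonetheless $\beta(I^r)=2r > r\alpha(I)$, contradicting what you propose to establish. There is no ``incompatibility'' between a high-degree minimal generator of $I^r$ and the equality $\alpha(\symbpoly)=\alpha(I)$. Using the exact value $\alpha(\symbpoly)er = \alpha(I)er$ (instead of the weaker Chudnovsky bound, as you suggest) gives exponent $\alpha(I)er - \beta(I^r) \geq r(e\alpha(I) - \beta(I)) \geq 0$, which is still short of $(e-1)r$ unless one strengthens the hypothesis to $\beta(I) \leq e(\alpha(I)-1)+1$.

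You should be aware that the paper's own proof also skips this step entirely: it only checks the size condition $er \geq \beta(I^r)/\alpha(\symbpoly)$ and declares that Theorem~\ref{thm:alphaslope} finishes, without verifying that the resulting exponent is $(e-1)r$. In the example above with $r=1$, the theorem yields only $I^{(3)} \subseteq \maxideal^{1} I$, not $\maxideal^{2} I$ as the proposition asserts. So you have put your finger on a genuine gap in the published argument, even though your suggested way of closing it does not work.
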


The next result shows that certain ideals satisfy a variant of Conjecture \ref{conj:alpha} where $e$ is replaced with $n$. Recall that a monomial ideal $I$ is integrally closed if and only if $\conv(\lattice I) \cap \mathbb N^{n+1} = \lattice I$. That is, any lattice point $\abar \in \conv(\lattice I) \cap \mathbb N^{n+1} $ corresponds to a monomial $\xbar^\abar \in I$.

\begin{prop} \label{prop:intclosed}
Let $I$ be a monomial ideal and suppose that $Q_{\subseteq \p} = R \cap I R_\p$ is integrally closed for every $\p \in \maxass(I)$. 
If $n \geq 3$ and $\alpha(I) \geq n+4$ then $\alpha(\symbpoly) \geq (\alpha(I) + n - 1)/n$.
Also, if $n=2$ and $\alpha(I) \geq n + 6 = 8$ then $\alpha(\symbpoly) \geq (\alpha(I) + n - 1)/n = (\alpha(I) + 1)/2$.
\begin{proof}
Let $\symbpoly$ be the symbolic polyhedron of $I$.
Take $\abar = (a_0, \ldots, a_n) \in \symbpoly$ with $\abar \cdot \onebar = \alpha(\symbpoly)$ and let $\bbar = ( \lceil a_0 \rceil, \ldots, \lceil a_n \rceil )$. Now $\xbar^\bbar$ is in $I$ since, for each $P \in \maxass(I)$, $\bbar \in \conv(\lattice{Q_{\subseteq P}}) \cap \mathbb N^{n+1} = \lattice{Q_{\subseteq P}}$.
Thus $\alpha(I) \leq \bbar \cdot \onebar \leq \abar \cdot \onebar + n+1 = \alpha(\symbpoly) + n + 1$.

Since $\alpha(I) \geq n+4$ and $n \geq 3$, we have $\alpha(I) \geq n + 3 + \frac{2}{n-1}$, so $(n-1)\alpha(I) \geq n^2 +2n -1$.
Thus $n\alpha(I) \geq \alpha(I) + n^2 + 2n -1$ and hence $n\alpha(I) - n^2 -n \geq \alpha(I) + n - 1$. Dividing by $n$ gives,
$\alpha(\symbpoly) \geq \alpha(I) - n - 1 \geq \frac{\alpha(I) + n - 1}{n}$.

A similar argument works when $n=2$ and $\alpha(I) \geq n + 6$.
\end{proof}
\end{prop}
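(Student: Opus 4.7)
The plan is to exploit the integral closure hypothesis via a rounding argument: round up an optimal point of $\symbpoly$ to a lattice point, which integral closure then lifts to an actual monomial of $I$, giving a bound relating $\alpha(I)$ to $\alpha(\symbpoly)$.

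First I would pick a minimizer $\abar = (a_0,\ldots,a_n) \in \symbpoly$ for the functional $\cdot\,\onebar$, so that $\abar \cdot \onebar = \alpha(\symbpoly)$; such a minimizer exists since $\symbpoly \subseteq \mathbb{R}_+^{n+1}$ is a polyhedron. Then set $\bbar = (\lceil a_0 \rceil,\ldots,\lceil a_n \rceil) \in \mathbb{N}^{n+1}$. For each $\p \in \maxass(I)$ we have $\abar \in \conv(\lattice{Q_{\subseteq \p}})$, and since $\bbar \geq \abar$ componentwise and the recession cone of $\conv(\lattice{Q_{\subseteq \p}})$ is $\mathbb{R}_+^{n+1}$ by Lemma \ref{lemma:cone}, $\bbar$ lies in $\conv(\lattice{Q_{\subseteq \p}}) \cap \mathbb{N}^{n+1}$. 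The integral closure of $Q_{\subseteq \p}$ then collapses this set to $\lattice{Q_{\subseteq \p}}$, so $\xbar^\bbar \in Q_{\subseteq \p}$. Intersecting over $\p \in \maxass(I)$ and invoking the $m=1$ case of Theorem \ref{thm:symbpower} yields $\xbar^\bbar \in I$.

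A one-line degree estimate now gives $\alpha(I) \leq \bbar\cdot\onebar \leq \abar\cdot\onebar + (n+1) = \alpha(\symbpoly) + n + 1$, so $\alpha(\symbpoly) \geq \alpha(I) - n - 1$. To finish, I would check the purely arithmetic implication $\alpha(I) - n - 1 \geq (\alpha(I)+n-1)/n$, which rearranges to $\alpha(I) \geq n + 3 + 2/(n-1)$: for $n \geq 3$ the hypothesis $\alpha(I) \geq n+4$ is enough, and for $n = 2$ the stronger hypothesis $\alpha(I) \geq 8$ covers the threshold with room to spare.

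The main obstacle is really the looseness of the rounding: each coordinate may lose almost a full unit to the ceiling, so the intermediate bound $\alpha(\symbpoly) \geq \alpha(I) - n - 1$ is weak and only beats $(\alpha(I) + n - 1)/n$ once $\alpha(I)$ is large in terms of $n$. This is precisely why the hypothesis on $\alpha(I)$ appears; sharpening the argument to remove it would require a more delicate choice of $\abar$ (for instance, at a rational vertex of $\symbpoly$ with controlled denominators) rather than a naive ceiling.
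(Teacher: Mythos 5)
Your proposal matches the paper's proof essentially step for step: round up an optimal point of $\symbpoly$, use the recession cone of each $\conv(\lattice{Q_{\subseteq\p}})$ together with integral closure to land a lattice point of each $Q_{\subseteq\p}$ (hence a monomial of $I$), and then run the same arithmetic bound $\alpha(\symbpoly) \geq \alpha(I) - n - 1 \geq (\alpha(I)+n-1)/n$ under the stated hypotheses. You supply a couple of justifications the paper leaves implicit (invoking Lemma~\ref{lemma:cone} and the $m=1$ case of Theorem~\ref{thm:symbpower}), but the argument is the same.
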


\bibliographystyle{amsplain}
\bibliography{sympowermonideal}

\providecommand{\bysame}{\leavevmode\hbox to3em{\hrulefill}\thinspace}
\providecommand{\MR}{\relax\ifhmode\unskip\space\fi MR }
% \MRhref is called by the amsart/book/proc definition of \MR.
\providecommand{\MRhref}[2]{%
  \href{http://www.ams.org/mathscinet-getitem?mr=#1}{#2}
}
\providecommand{\href}[2]{#2}
\begin{thebibliography}{10}

\bibitem{MR0242802}
M.~F. Atiyah and I.~G. Macdonald, \emph{Introduction to commutative algebra},
  Addison-Wesley Publishing Co., Reading, Mass.-London-Don Mills, Ont., 1969.
  \MR{0242802 (39 \#4129)}

\bibitem{refBetal}
T.~Bauer, S.~Di~Rocco, B.~Harbourne, M.~Kapustka, A.~Knutsen, W.~Syzdek, and
  T.~Szemberg, \emph{A primer on {S}eshadri constants}, Interactions of
  classical and numerical algebraic geometry, Contemp. Math., vol. 496, Amer.
  Math. Soc., Providence, RI, 2009, pp.~33--70. \MR{2555949 (2010k:14010)}

\bibitem{refBCH}
C.~Bocci, S.~M. Cooper, and B.~Harbourne, \emph{Containment results for ideals
  of various configurations of points in {$\mathbb P^N$}}, J. Pure Appl.
  Algebra \textbf{In press (arXiv.org:1109.1884)}.

\bibitem{refBH}
C.~Bocci and B.~Harbourne, \emph{Comparing powers and symbolic powers of
  ideals}, J. Algebraic Geom. \textbf{19} (2010), no.~3, 399--417. \MR{2629595
  (2011d:13021)}

\bibitem{refCh}
G.~V. Chudnovsky, \emph{Singular points on complex hypersurfaces and
  multidimensional {S}chwarz lemma}, Seminar on {N}umber {T}heory, {P}aris
  1979--80, Progr. Math., vol.~12, Birkh\"auser Boston, Mass., 1981,
  pp.~29--69. \MR{633888 (83m:32002)}

\bibitem{refDumA}
M.~Dumnicki, \emph{Containments of symbolic powers of ideals of generic points
  in {$\Bbb P^3$}}, Preprint (arXiv:1212.0718) (2012).

\bibitem{refDumB}
\bysame, \emph{Symbolic powers of ideals of generic points in {$\Bbb P^3$}}, J.
  Pure Appl. Algebra \textbf{216} (2012), no.~6, 1410--1417. \MR{2890510}

\bibitem{refDST}
M.~Dumnicki and H.~Tutaj-Gasi\'{n}ska, \emph{A counter-example to a question by
  huneke and harbourne}, Preprint (arXiv:1301.7440) (2013).

\bibitem{refELS}
L.~Ein, R.~Lazarsfeld, and K.~E. Smith, \emph{Uniform bounds and symbolic
  powers on smooth varieties}, Invent. Math. \textbf{144} (2001), no.~2,
  241--252. \MR{1826369 (2002b:13001)}

\bibitem{refHaHu}
B.~Harbourne and C.~Huneke, \emph{Are symbolic powers highly evolved?}, J.
  Ramanujan Math. Soc. \textbf{In Press (arXiv:1103.5809)}.

\bibitem{refHS}
B.~Harbourne and A.~Seceleanu, \emph{Containment counterexamples for ideals of
  various configurations of points in {$\mathbb P^n$}}, Preprint
  (arXiv:1306.3668) (2013).

\bibitem{refHoHu}
M.~Hochster and C.~Huneke, \emph{Comparison of symbolic and ordinary powers of
  ideals}, Invent. Math. \textbf{147} (2002), no.~2, 349--369. \MR{1881923
  (2002m:13002)}

\bibitem{MR693096}
T.~S. Motzkin, \emph{Selected papers}, Contemporary Mathematicians,
  Birkh\"auser Boston, Mass., 1983, Edited and with an introduction by David
  Cantor, Basil Gordon and Bruce Rothschild. \MR{693096 (84i:01125)}

\bibitem{refSk}
H.~Skoda, \emph{Estimations {$L^{2}$} pour l'op\'erateur {$\overline \partial
  $} et applications arithm\'etiques}, Journ\'ees sur les {F}onctions
  {A}nalytiques ({T}oulouse, 1976), Springer, Berlin, 1977, pp.~314--323.
  Lecture Notes in Math., Vol. 578. \MR{0460723 (57 \#716)}

\bibitem{MR1800904}
Rafael~H. Villarreal, \emph{Monomial algebras}, Monographs and Textbooks in
  Pure and Applied Mathematics, vol. 238, Marcel Dekker Inc., New York, 2001.
  \MR{1800904 (2002c:13001)}

\bibitem{refW}
M.~Waldschmidt, \emph{Propri\'et\'es arithm\'etiques de fonctions de plusieurs
  variables. {II}}, S\'eminaire {P}ierre {L}elong ({A}nalyse) ann\'ee 1975/76,
  Springer, Berlin, 1977, pp.~108--135. Lecture Notes in Math., Vol. 578.
  \MR{0453659 (56 \#11919)}

\bibitem{MR1311028}
G.~M. Ziegler, \emph{Lectures on polytopes}, Graduate Texts in Mathematics,
  vol. 152, Springer-Verlag, New York, 1995. \MR{1311028 (96a:52011)}

\end{thebibliography}

\end{document}